\documentclass[12pt]{article}
\usepackage{mathrsfs}
\usepackage{amssymb}
\usepackage{color}

\setlength{\oddsidemargin}{0mm} \setlength{\evensidemargin}{0mm}
\setlength{\topmargin}{-15mm} \setlength{\textheight}{220mm}
\setlength{\textwidth}{155mm}
\usepackage{amsthm}
\usepackage{amsmath}
\usepackage{graphicx}
\allowdisplaybreaks

 \newtheorem{thm}{Theorem}[section]
 
 \newtheorem{lem}[thm]{Lemma}
 
 \theoremstyle{definition}
 
 \newtheorem{rem}[thm]{Remark}
 \numberwithin{equation}{section}

\newtheorem{theorem}{Theorem}[section]

\theoremstyle{definition}

\theoremstyle{remark}
\newtheorem{remark}[theorem]{Remark}

\begin{document}
\title{Global Well-posedness of Incompressible Elastodynamics in Three Dimensional Thin Domain}
\author{Yuan Cai
\footnote{Department of Mathematics, The Hong Kong University of Science and Technology, Clear Water Bay, Kowloon, Hong Kong,
 Email: maycai@ust.hk}
 \and
 Fan Wang
\footnote{Department of Mathematics, Southwest Jiaotong University, Chengdu, 611756, China,
Email: wangf767@swjtu.edu.cn}
 }

\date{}
\maketitle

\begin{abstract}
In this article, we prove global existence of classical solutions to the incompressible isotropic Hookean elastodynamics in three-dimensional thin domain $\Omega_\delta=\mathbb{R}^2\times [0,\delta]$  with periodic boundary condition.
This system is essentially two dimensional
quasilinear wave type equations.
Following the classical vector field theory and the generalized energy method of Klainerman,
we introduce the anisotropic weighted Sobolev type inequalities,
the anisotropic generalized energy and the anisotropic weighted $L^2$ norm adapted to the thin domain.
The main issue in the anisotropic generalized energy estimate is that,
the pressure gradient brings an extra singular $\delta^{-1}$ factor arising from the  $\partial^2_3$ derivative.
Based on the inherent cancellation structure, we introduce an auxiliary anisotropic generalized energy to overcome this difficulty.
 The ghost weight technique introduced by Alinhac \cite{Alinhac01a} and the strong null condition introduced by Lei \cite{Lei16}
play an important role for temporal decay estimates.
\end{abstract}

\maketitle





\section{Introduction}

This paper is concerned with the global existence of  classical solutions to
the Cauchy problem of incompressible nonlinear elastodynamics
in three dimensional thin domain $\Omega_\delta=\mathbb{R}^2\times [0,\delta]$  with periodic boundary condition.
For homogeneous, isotropic and hyperelastic materials, the motion of
the elastic fluids is determined by the following Lagrangian
functional of flow maps \cite{Lei16}:
\begin{eqnarray}\label{LaF}
\mathcal{L}(X; T, \Omega_\delta) &=&
\int_0^T\int_{\Omega_\delta}\big(\frac{1}{2}|\partial_tX(t, y)|^2 -
W(\nabla X(t, y))\\\nonumber && +\ p(t, y)\big[\det\big(\nabla X(t,
y)\big) - 1\big]\big)dydt.
\end{eqnarray}
Here $X(t, y)$ is the flow map, $W \in C^\infty(GL_3, \mathbb{R}_+)$ is the strain energy
function which depends only on the deformation tensor $\nabla X$,
 $p(t, y)$ is a Lagrangian
multiplier which forces the flow maps to be
incompressible.
Without loss of generality, we will only focus on
the typical Hookean case, in
which the strain energy functional is given by
$$W(\nabla X) = \frac{1}{2}|\nabla X|^2.$$
In this case, the Euler-Lagrangian equation of \eqref{LaF} takes
\begin{equation}\label{Elasto-Hook}
\partial_t^2 X - \Delta X = - (\nabla X)^{- \top}\nabla p.
\end{equation}
On the other hand,
according to the conservation law of mass, there holds
\begin{equation}\nonumber
\int_{\Omega}dy = \int_{\Omega_t}dX,\quad \Omega_t = \{X(t, y)|y \in
\Omega\}
\end{equation}
for any smooth bounded connected domain $\Omega$. This implies that
\begin{equation}\label{incom}
\det(\nabla X) \equiv 1.
\end{equation}
The above \eqref{incom} is equivalent to the incompressible
condition.

There are lots of works on the nonlinear wave equations and elastodynamics in the whole space, see for instance \cite{Alinhac10, H97, LZ17}.
However, the research in the thin domain is almost blank.
Hence, it's very interesting to study
the corresponding problem.
Our main goal in this paper is to establish the global existence result of \eqref{Elasto-Hook}-\eqref{incom} in the thin domain.

\subsection{A review of related results}
Let us first review a few highlights from the
existence theory of nonlinear wave equations and elastodynamics from various aspects.
For quasilinear wave equations in dimension three, and for small initial
data, John and Klainerman \cite{JohnKlainerman84} obtained the first non-trivial almost global existence result.
The global existence of nonlinear wave and elastic systems in dimension no bigger than three hinges on two basic assumptions: the smallness of the initial data and the null
condition of nonlinearities. The omission of either of these assumptions can lead to the break down of solutions in finite time \cite{John81, John84, T98}.
Under the null condition, the fundamental work of global existence
of classical solutions for 3D scalar quasilinear wave equation
was achieved by Klainerman \cite{Klainerman86} and by Christodoulou \cite{Christodoulou86} independently.
The existence question is more delicate in 2-D case due to the
much weaker time decay rate. Even
under the assumption of the null condition, quadratic nonlinearities
have at most critical time decay.
 A series of articles considered
the case of cubically nonlinear equations satisfying the null
condition, see for example \cite{G93, HK10, Katayama1, Katayama}. Alinhac
\cite{Alinhac01a} was the first to establish global existence for 2D
scalar wave equation with null bilinear forms, his argument combines
vector fields  with
the ghost energy method.

For compressible elastodynamic  systems, commonly referred as elastic waves in literature,
John \cite{John88} showed the existence of almost global solutions
for small displacement (see also \cite{KS96}).
The breakthrough work of global well-posedness is due to Sideris \cite{Sideris96, Sideris00} and also Agemi \cite{Agemi00}, under a nonresonance condition which is
physically consistent with the system.
With an
additional repulsive Poisson term, the global existence was established in \cite{HM17} which
allows a general form for the pressure.
For the incompressible elastodynamics,
the
global-well-posedness for the 3D case was obtained by Sideris and Thomases in \cite{ST05,
ST07}. We also refer to \cite{ST06} for a  unified treatment, and \cite{LW} for some
improvement on the uniform time-independent bound on the highest order energy.
The uniform bound of the highest-order energy of the 2D case was proved recently by Cai \cite{CY}.

The first non-trivial long time existence result was established
 by Lei, Sideris and Zhou
\cite{LSZ13}, in which the authors proved the almost global
existence for the 2D incompressible isotropic elastodynamics
 in Eulerian coordinate.
The global well-posedness was finally solved by Lei \cite{Lei16}
where he found that in Lagrangian coordinate, the incompressible elastodynamics
actually inherently satisfies the strong null condition.
Afterwards, Wang \cite{W17}
gave a new proof of this latter result using space-time resonance method and
a normal form transformation.

Now we recall some results on the well-posedness of the thin domain.
In the early 1990s, Hale and Raugel studied reaction diffusion equations and damped wave equations on thin domain, see
\cite{hyperbolic} and \cite{reaction}. Later on, Raugel and Sell  \cite{Raugel1, Raugel2} further studied the existence of strong solutions of the Navier-Stokes
equations on thin domain.
See also \cite{2Dper, DG, KZ1, KZ2, Z} for further results.
In all these works, a dissipative structure (with viscosity) is a key ingredient to study the long time existence.  Consequently, these arguments cannot be applied to study long time solutions for wave type equations in thin domain.

Before ending this subsection, let us mention some related works on
viscoelasticity, where there is viscosity in the momentum equation.
The global well-posedness with near equilibrium states was first
obtained in \cite{LLZ2005} (see also \cite{LZ05}). The 3-D case
was obtained independently in  \cite{CZ2006} (see also the thesis
\cite{LeiS}) and \cite{LeiLZ08}. The initial boundary value problem
was considered in \cite{LL2008}, the compressible case can be found
in \cite{QZ2010, HW2011}. For more results near equilibrium, readers
are referred to the nice review paper by Lin \cite{Lin} and other
works in {\cite{HX2010,ZF} as references. In \cite{Lei14} a
class of large solutions in two space dimensions are established via
the strain-rotation decomposition (which is based on earlier results
in \cite{Lei10} and \cite{Lei3}). In all of these works, the
initial data was restricted by the viscosity parameter. The work
\cite{Kessenich} was the first to establish global existence for 3-D
viscoelastic materials uniformly in the viscosity parameter.
The more difficult two dimensional global vanishing viscosity limit problem
was solved by Cai etc. \cite{CLLM}.

\subsection{Notations, theorem and key ideas}
Throughout this paper, we use the notation $\partial$ for space-time
derivatives:
$$\partial = (\partial_t, \partial_1, \partial_2, \partial_3),$$
$\nabla$ for space derivatives:
$$\nabla = (\partial_1, \partial_2, \partial_3),$$
and $\partial_v$ for horizontal derivative
$$\partial_v = (\partial_1, \partial_2).$$
We use $\langle a\rangle$ to denote
$$\langle a\rangle = \sqrt{1 + a^2},$$
and $[a]$ to denote
$$[a] = {\rm the\ biggest\ integer\ which\ is\ no\ more\ than\ |a| }.$$

We will consider the global well-posedness of incompressible elastodynamics \eqref{Elasto-Hook}-\eqref{incom}
near equilibrium.
Let us introduce the displacement variables as follows:
\begin{equation*} 
X(t, y) = y + Y(t, y).
\end{equation*}
Then the Hookean incompressible elastodynamics \eqref{Elasto-Hook} are written as follows
\begin{equation} \label{B-9}
\partial_t^2Y - \Delta Y = -\nabla p
- (\nabla Y)^{ \top}(\partial_t^2 - \Delta )Y, 
\end{equation}
and  the incompressible condition \eqref{incom} becomes
\begin{align}\label{Struc-1}
\nabla\cdot Y
&= -\frac12 [({\textrm{tr}} \nabla Y)^2- \textrm{tr} (\nabla Y)^2]- \det(\nabla Y)\\
&= -\frac12 (\nabla_i Y^i \nabla_j Y^j- \nabla_i Y^j \nabla_j Y^i)- \det(\nabla Y). \nonumber
\end{align}
Here and in what follows, summation convention over repeated indices will always be assumed.
Moreover, we use the following convention
$$(\nabla Y)_{ij} = \frac{\partial Y^i}{\partial y^j}.$$

Now let us introduce the generalized derivatives and various anisotropic generalized energy.
The rotational operator is defined as follows
\begin{align*}
\Omega=(\Omega_1,\Omega_2,\Omega_3)=y\wedge \nabla.
\end{align*}
Note that we have the following decomposition of gradient operator along
radial and rotational direction:
\begin{equation}\label{grad-decomp}
\nabla=\frac{y}{r}\partial_r-\frac{y}{r^2} \wedge\Omega.
\end{equation}
The angular momentum operators are defined as follows
\begin{align*}
\widetilde{\Omega}=\Omega_i I+U_i,\quad i=1,2,3
\end{align*}
with
\begin{align*}
U_1=
\left[
\begin{matrix}
0 &0 &0\\
0 &0 &1\\
0 &-1&0
\end{matrix}
\right],
\quad
U_2=\left[
\begin{matrix}
0 &0 &-1\\
0 &0 &0\\
1 &0 &0
\end{matrix}
\right],
\quad
U_3=
\left[
\begin{matrix}
0 &1 &0\\
-1&0 &0\\
0 &0 &0
\end{matrix}
\right].
\end{align*}
The scaling operator which is defined as follows:
\begin{align*}
S=t\partial_t+r\partial_r,
\end{align*}
where more useful vector field is the following modified scaling operator
\begin{align*}
\widetilde{S}=(S-1).
\end{align*}

For any  vector $Y$ and scalar $p$, we make the following
conventions:
\begin{equation}\nonumber
\begin{cases}
\widetilde{\Omega} Y \triangleq \Omega Y + UY, \quad
\widetilde{\Omega} p
\triangleq \Omega p,\quad \widetilde{\Omega} Y^j = (\widetilde{\Omega} Y)^j,\\[-4mm]\\
\widetilde{S} Y \triangleq (S - 1) Y, \quad \widetilde{S} p
\triangleq S p,\quad \widetilde{S} Y^j = (\widetilde{S} Y)^j.
\end{cases}
\end{equation}
Let $\Gamma$ be any operator of the following set
\begin{equation}\nonumber
\{\partial_t, \partial_1, \partial_2, \partial_3, \widetilde{\Omega}_1, \widetilde{\Omega}_2, \widetilde{\Omega}_3,
\widetilde{S}\}.
\end{equation}
In order to separate the $\partial_3$ derivative with the others, we introduce the following notation conventions.
For any multi-index $\alpha = \{\alpha_1, \alpha_2, \alpha_3,
\alpha_4, \alpha_5, \alpha_6, \alpha_7, \alpha_8 \} \in \mathbb{N}^8$,
let  $\alpha=(h, a)$, $h\in\mathbb{N}$ and $a=(a_1, a_2, a_3, a_4, a_5, a_6, a_7)\in \mathbb{N}^7$.
$Z^a=\partial_t^{a_1} \partial_1^{a_2} \partial_2^{a_3}
\widetilde{\Omega}^{a_4}_1 \widetilde{\Omega}^{a_5}_2 \widetilde{\Omega}^{a_6}_3
\widetilde{S}^{a_7}$. Moreover,
\begin{align*}
\Gamma^\alpha=\partial_3^h Z^a.
\end{align*}
We also use the notation $\beta=(l, b)$, $\gamma=(m, c)$, $\iota=(n, d)$
where $\beta,\ \gamma,\ \iota\in \mathbb{N}^8$,
$l,\ m,\ n\ \in \mathbb{N}$ and $b,\ c,\ d\ \in \mathbb{N}^7$ and
\begin{align*}
\Gamma^\beta=\partial_3^l Z^b,\quad
\Gamma^\gamma=\partial_3^m Z^c,\quad
\Gamma^\iota=\partial_3^n Z^d.
\end{align*}

The anisotropic generalized energy is introduced as follows:
\begin{equation}\nonumber
\mathcal{E}_{\kappa} = \sum_{|\alpha| \leq \kappa-1} \delta^{2(h-\frac 12)}
\| \partial \Gamma^\alpha Y\|_{L^2(\Omega_\delta)}^2.
\end{equation}
In order to close the energy due the difficulty caused by the anisotropy, we need to introduce the
functional energy
\begin{equation}\nonumber
\mathcal{F}_{\kappa} = \sum_{|\alpha| \leq \kappa-2}
\delta^{2(h-\frac 12)} \| \partial_3 \partial \Gamma^\alpha Y\|_{L^2(\Omega_\delta)}^2.
\end{equation}
Inspired by Lei \cite{Lei16}, we define the weighted $L^2$ generalized energy  as follows
\begin{equation*} 
\mathcal{X}_{\kappa} =  \sum_{|\alpha| \leq \kappa - 2}
\delta^{2(h-\frac 12)} \Big(\int_{\Omega_\delta} ( t - r)^2 |\partial^2\Gamma^\alpha
Y|^2dy + \int_{r > 2t}t^2
|\partial^2\Gamma^\alpha Y|^2dy\Big).
\end{equation*}
To describe the space of the initial data, we introduce
\begin{equation}\nonumber
\Lambda = \{\nabla, r\partial_r, \Omega\}.
\end{equation}
For $\Lambda^\alpha$ with $\alpha\in \mathbb{N}^7$.
Let $\alpha=(h, a)$, $h\in\mathbb{N}$ and $a=(a_1, a_2, a_3, a_4, a_5, a_6)\in \mathbb{N}^6$,
$ A^a=\partial_1^{a_1} \partial_2^{a_2}
\widetilde{\Omega}^{a_3}_1 \widetilde{\Omega}^{a_4}_2 \widetilde{\Omega}^{a_5}_3
(r\partial_r)^{a_6}$. Moreover,
\begin{align*}
\Lambda^\alpha=\partial_3^h A^a.
\end{align*}

Now we are ready to state the main result in this paper as follows:
\begin{thm}\label{GlobalW}
Let $W(\nabla X) = \frac{1}{2}|\nabla X|^2$ be an isotropic Hookean strain energy
function.
Let $\kappa \ge
15$, $M_0 > 0$ and $0<\mu<\frac 18$ be two given
constants.
Suppose that $Y_0$ satisfy the structural constraint
\eqref{Struc-1} and $(Y_0, v_0)$ satisfies
\begin{eqnarray}\nonumber
&&\mathcal{E}_{\kappa}^{\frac{1}{2}}(0) = \sum_{|\alpha| \leq \kappa
- 1}\big(
  \delta^{h-\frac12}\|\nabla\Lambda^{\alpha}Y_0\|_{L^2} +
  \delta^{h-\frac12}\|\Lambda^{\alpha}v_0\|_{L^2}\big) \leq
  M_0,\\\nonumber
&&\mathcal{E}_{\kappa - 3}^{\frac{1}{2}}(0) = \sum_{|\alpha| \leq
\kappa - 4}
  \big(\delta^{h-\frac12}\|\nabla\Lambda^{\alpha}Y_0\|_{L^2} +
  \delta^{h-\frac12}\|\Lambda^{\alpha}v_0\|_{L^2}\big) \leq
  \epsilon,\\\nonumber
&&\mathcal{F}_{\kappa - 3}^{\frac{1}{2}}(0) = \sum_{|\alpha| \leq
\kappa - 5}
  \big(\delta^{h-\frac12}\|\partial_3\nabla\Lambda^{\alpha}Y_0\|_{L^2} +
  \delta^{h-\frac12}\|\partial_3\Lambda^{\alpha}v_0\|_{L^2}\big) \leq
  \epsilon.
\end{eqnarray}
There exists a positive constant $\epsilon_0 < e^{- M_0}$ which
depends only on $\kappa$, $M_0$ and $\mu$, such that, if $\epsilon
\leq \epsilon_0$, then the system of incompressible  isotropic Hookean
elastodynamics \eqref{B-9}-\eqref{Struc-1} with the following initial data
$$Y(0, y) = Y_0(y),\quad \partial_tY(0, y) = v_0(y)$$ has a unique
global classical solution  in the thin domain $\Omega_\delta=\mathbb{R}^2\times [0,\delta]$  with periodic boundary condition, and
\begin{eqnarray}\nonumber
\mathcal{E}_{\kappa - 3}^{\frac{1}{2}}(t)
+\mathcal{F}_{\kappa - 3}^{\frac{1}{2}}(t) \leq \epsilon
\exp\big\{C_0^2M_0\big\},\quad \mathcal{E}_{\kappa}^{\frac{1}{2}}(t)
\leq C_0M_0(1 + t)^{\mu}
\end{eqnarray}
for some $C_0 > 1$ uniformly in $t$ and $\delta$.
\end{thm}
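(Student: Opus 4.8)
The plan is to run a bootstrap (continuity) argument built on three coupled a priori estimates: the growth estimate for the high-order anisotropic energy $\mathcal{E}_\kappa$, the uniform-in-time boundedness of the lower-order energies $\mathcal{E}_{\kappa-3}$ and $\mathcal{F}_{\kappa-3}$, and the uniform boundedness of the weighted $L^2$ quantity $\mathcal{X}_{\kappa-3}$. Local existence in the thin domain with periodic boundary condition in $y_3$ is standard (energy method for the quasilinear system \eqref{B-9} with the constraint \eqref{Struc-1} propagated by the flow), so the entire content is the global a priori bound. I would set up the bootstrap by assuming on a maximal interval $[0,T)$ that $\mathcal{E}_{\kappa}^{1/2}(t)\le 2C_0 M_0(1+t)^\mu$ and $\mathcal{E}_{\kappa-3}^{1/2}(t)+\mathcal{F}_{\kappa-3}^{1/2}(t)\le 2\epsilon\exp\{C_0^2 M_0\}$, and then improve both constants.

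The first block of work is the decay toolbox adapted to the thin domain. Using the anisotropic weighted Klainerman–Sobolev inequalities stated earlier, the bootstrap assumption on $\mathcal{E}_\kappa$ and $\mathcal{X}_{\kappa-3}$ converts into pointwise decay: roughly $|\partial\Gamma^\alpha Y|\lesssim \epsilon (1+t+r)^{-1/2}(1+|t-r|)^{-1/2}$ for $|\alpha|$ up to about $\kappa-5$, with the appropriate $\delta$-weights; crucially the strong null condition of Lei \cite{Lei16} (the structure $\nabla\cdot Y$ being purely quadratic-and-higher in \eqref{Struc-1}, and the nonlinearity $(\nabla Y)^\top(\partial_t^2-\Delta)Y$ in \eqref{B-9}) upgrades the tangential-to-the-light-cone derivatives so that quadratic interactions gain an extra $(1+|t-r|)/(1+t+r)$ factor. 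Together with Alinhac's ghost weight $e^{q(t-r)}$ this yields an integrable space-time error term. I would organize this as: (i) commutator estimates $[\Gamma^\alpha,\Delta]$, $[\Gamma^\alpha,\partial_t^2-\Delta]$ and the algebra of the $\Gamma$'s with the $\delta^{2(h-1/2)}$ weights; (ii) pointwise decay via the anisotropic Sobolev lemmas; (iii) the null-form gain.

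The heart of the proof, and the step I expect to be the main obstacle, is the pressure term. Differentiating \eqref{B-9} by $\Gamma^\alpha$ and doing the $L^2$ energy estimate, $\nabla p$ contributes $\int \partial_t\Gamma^\alpha Y\cdot\nabla\Gamma^\alpha p$; one takes divergence of \eqref{B-9} to get an elliptic equation for $p$ whose right side, via the constraint \eqref{Struc-1}, is quadratic — but when a $\partial_3$ falls on the pressure the elliptic gain is only in the horizontal variables, and recovering $\partial_3^2 p$ costs a factor $\delta^{-1}$ relative to the natural $\delta$-scaling, which is exactly the ``extra singular $\delta^{-1}$ factor'' flagged in the abstract. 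The remedy is the auxiliary anisotropic generalized energy $\mathcal{F}_\kappa$: one shows the dangerous contribution has a cancellation structure so that, after integration by parts in $y_3$ (legitimate by periodicity) and using the constraint equation, the worst term is either a total derivative or is absorbed by $\mathcal{F}_\kappa$ with a good sign, so that the coupled system $(\mathcal{E}_\kappa,\mathcal{F}_\kappa)$ closes even though neither closes alone. Concretely I would: estimate $\|\nabla\Gamma^\alpha p\|_{L^2}$ and $\|\partial_3\nabla\Gamma^\alpha p\|_{L^2}$ in terms of $\mathcal{E}$, $\mathcal{F}$ times the small pointwise norms; plug into the energy identities for $\mathcal{E}_\kappa$, $\mathcal{F}_\kappa$, $\mathcal{E}_{\kappa-3}$, $\mathcal{F}_{\kappa-3}$; and for the weighted estimate derive the analogous identity for $\mathcal{X}_{\kappa-3}$ using the multiplier $(t-r)$-type weights together with the ghost weight.

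Finally I would assemble the Grönwall step: the $\mathcal{E}_\kappa$ identity has the schematic form $\frac{d}{dt}\mathcal{E}_\kappa\lesssim \frac{\epsilon}{1+t}\mathcal{E}_\kappa + (\text{ghost-weight integrable terms})$, giving $\mathcal{E}_\kappa^{1/2}(t)\le C_0 M_0 (1+t)^{C\epsilon}\le C_0 M_0(1+t)^\mu$ once $\epsilon_0$ is small relative to $\mu$; the lower-order identity has $\frac{d}{dt}(\mathcal{E}_{\kappa-3}+\mathcal{F}_{\kappa-3})\lesssim \frac{\epsilon}{(1+t)^{1+\sigma}}(\cdots)$ with integrable right side (the better decay is available because fewer derivatives let us spend more $\Gamma$'s on the decaying factors), yielding the uniform bound $\le \epsilon\exp\{C_0^2 M_0\}$; and $\mathcal{X}_{\kappa-3}$ is controlled by $\mathcal{E}_{\kappa-3}$ plus an integrable ghost-weight remainder. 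All constants are tracked to be independent of $\delta$ because every norm carries the intrinsic $\delta^{2(h-1/2)}$ weight and the $y_3$-integrations are over a length-$\delta$ interval; choosing $\epsilon_0<e^{-M_0}$ small enough depending only on $\kappa,M_0,\mu$ closes the bootstrap, forces $T=\infty$, and gives the stated bounds, uniqueness following from the energy estimate applied to the difference of two solutions.
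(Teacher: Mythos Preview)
Your overall architecture --- bootstrap on $(\mathcal{E}_\kappa,\mathcal{E}_{\kappa-3},\mathcal{F}_{\kappa-3})$, anisotropic Sobolev inequalities, strong null condition, ghost weight, and the auxiliary energy $\mathcal{F}$ to tame the $\delta^{-1}$ pressure singularity --- matches the paper. But two of your technical mechanisms differ from what the paper actually does, and one of them is not merely a stylistic choice.

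First, the weighted norm $\mathcal{X}_{\kappa-3}$ is \emph{not} propagated by an evolution identity with a ghost weight, as you suggest. The paper controls $\mathcal{X}_\kappa$ at each fixed time by a Klainerman--Sideris type inequality (Lemma~\ref{WE-1}): schematically $\mathcal{X}^{1/2}\lesssim \mathcal{E}^{1/2}+t\|(\partial_t^2-\Delta)\Gamma^\alpha Y\|_{L^2}$, and then bounds the last term by $\mathcal{E}^{1/2}(\mathcal{E}_{\kappa-3}^{1/2}+\mathcal{F}_{\kappa-3}^{1/2})$ via the strong null structure (Lemmas~\ref{SN-1}--\ref{WE-2}). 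So $\mathcal{X}_{\kappa-3}\lesssim \mathcal{E}_{\kappa-3}+\mathcal{F}_{\kappa-3}$ is a static consequence, not the output of a time integration. Treating $\mathcal{X}$ as an evolved energy with ``integrable ghost-weight remainder'' would force you to reinvent this machinery.

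Second, for the lower-order energies the paper does \emph{not} estimate the pressure at all: it passes to the curl part $\partial(-\Delta)^{-1/2}\nabla\times\Gamma^\alpha Y$, shows this is equivalent to $\mathcal{E}_{\kappa-3}$ (resp.\ $\mathcal{F}_{\kappa-3}$) modulo quadratic errors, and runs the energy estimate on the curl, which annihilates $\nabla\Gamma^\alpha p$. This is what produces the clean $\langle t\rangle^{-3/2}$ right-hand side. Your plan to ``estimate $\|\nabla\Gamma^\alpha p\|$ and $\|\partial_3\nabla\Gamma^\alpha p\|$'' for the lower-order energies would work but is less efficient. Relatedly, your description of the $\mathcal{F}$-closure (``integration by parts in $y_3$, worst term absorbed with a good sign'') is not the mechanism: the paper instead proves a second-derivative pressure bound $\delta^{h-1/2}\|\nabla^2\Gamma^\alpha p\|_{L^2}\lesssim \Phi(\alpha,2)$ (Section~\ref{EPWTD}), where the null/determinant structure of $\nabla\cdot(\partial_t^2-\Delta)\Gamma^\alpha Y$ lets one move a derivative and avoid $\partial_3^3\Gamma^\alpha Y$ altogether. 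No sign condition or absorption is involved.
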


There are mainly three ingredients in proving Theorem \ref{GlobalW}.
Firstly, we establish a series of anisotropic weighted Sobolev type inequalities using generalized derivative in the thin domain.
Those inequalities are dimensionless in terms of the thickness parameter $\delta$. According to the scaling in $x_3$ direction in the thin domain, when taking $L^2$ norm in $x_3$ variable, a $\delta^{-\frac12}$ should be applied, when taking a $\partial_3$ derivative, a $\delta$ should be applied.
The definition of generalized energy $\mathcal{E}_{\kappa}$ also follows this rule except the generalized derivative.
Secondly, we can not directly adapt the Klainerman-Sobolev inequality in the thin domain due to the absence of Lorentz invariance.
This is compensated by introducing the anisotropic weighted $L^2$ norm $\mathcal{X}_{\kappa}$.
Thirdly, due to the quasilinearity, in the anisotropic generalized energy estimate, the pressure
gradient brings an extra singular $\delta^{-1}$ factor arising from the $\partial_3^2$ derivative.
To overcome this difficulty, we introduce an auxiliary anisotropic generalized energy $\mathcal{F}_{\kappa}$, which always contains an extra $\partial_3$ derivative but apply no extra $\delta$ factor. The estimate of $\mathcal{F}_{\kappa}$ prompts us to control the pressure with twice traditional derivative (see Section \ref{EPWTD}).
Our observation is that the inherent cancellation structure gives us some freedom to move one derivative.
Thus the generalized energy estimate can be closed. 

Now let us discuss some other strategies related in this paper. Inspired by \cite{Lei16},
for the lower order energy estimate, instead of estimating the unknowns,
we turn to estimate its curl-free part since divergence-free part is quadratic nonlinearity and thus negligible.
This gives us convenience by deleting the pressure due to the application of the curl operator.
On the other hand, the incompressible elastodynamics in three dimensional thin domain is essentially two dimensional that the unknowns exhibit $\langle t\rangle^{-\frac12}$ supercritical decay in time. Hence we need to use the inherent strong null structure and the ghost weight energy to enhance the temporal decay.

This paper is organized as follows: In Section
\ref{Equations}, we will present the basic
properties of the system, introduce various notations for vector fields and anisotropic energy,
and also outline the proof of Theorem \ref{GlobalW}. In Section \ref{ASI}, we will prove some
weighted anisotropic Sobolev inequalities and weighted $L^\infty$ estimate.
In Section \ref{EEITD}, we will study the elliptic estimates in the thin domain.
In Section \ref{ESQ}, we give the estimate for good derivatives and lay down a preliminary
step for estimating weighted anisotropic $L^2$ energy. Then we will
explore the estimate for strong null form in Section \ref{WE}, and
at the end of that section we give the estimate for the weighted
$L^2$ energy.
Section \ref{EPWTD} is devoted to the second derivative estimate of pressure.
In Section \ref{HEE} we present the highest order
generalized energy estimate. Then we perform the lower order
generalized energy estimate in Section \ref{LEE}.

\section{Commutation and energy estimate ansatz}\label{Equations}

In this section, we begin by applying various vector fields onto the system.
Though we are considering the elastodynamics in thin domain,
the commutation properties of the vector fields are exactly the same to the case in the whole space.
We refer to Sideris \cite{Sideris00} for the case in $\mathbb{R}^3$ and Lei \cite{Lei16} for the case in $\mathbb{R}^2$.
Here, we will give a
sketch of the application of the vector fields and indicate the differences with the classical theory.

We first apply the rotational operators.
Through direct calculation, one has
\begin{equation}\label{2-2}
\begin{cases}
(\partial_t^2-\Delta) \widetilde{\Omega} Y
= -(\nabla\widetilde{\Omega} Y)^{\top}(\partial_t^2-\Delta) Y
 -(\nabla Y)^{\top}(\partial_t^2-\Delta) \widetilde{\Omega} Y
 - \nabla \widetilde{\Omega} p, \\[-4mm]\\
\nabla\cdot \widetilde{\Omega} Y
 = -\frac12 \big(\partial_i\widetilde{\Omega} Y^i \partial_j Y^j -
  \partial_iY^j\partial_j\widetilde{\Omega} Y^i\big)
  -\frac12 \big(\partial_i Y^i \partial_j\widetilde{\Omega} Y^j -
  \partial_i \widetilde{\Omega} Y^j\partial_j Y^i\big) \\[-4mm]\\
\qquad\qquad\ - \left|
\begin{matrix}
 \partial_1 \widetilde{\Omega} Y^1 &\partial_2 \widetilde{\Omega} Y^1 & \partial_3 \widetilde{\Omega} Y^1\\
 \partial_1 Y^2& \partial_2  Y^2& \partial_3 Y^2\\
\partial_1 Y^3&\partial_2 Y^3&\partial_3Y^3
\end{matrix} \right|
- \left|
\begin{matrix}
 \partial_1 Y^1 &\partial_2  Y^1 & \partial_3  Y^1\\
 \partial_1 \widetilde{\Omega} Y^2& \partial_2 \widetilde{\Omega} Y^2& \partial_3 \widetilde{\Omega} Y^2\\
\partial_1 Y^3&\partial_2 Y^3&\partial_3Y^3
\end{matrix} \right|
\\[-4mm]\\
\qquad\qquad\ - \left|
\begin{matrix}
 \partial_1 Y^1 &\partial_2  Y^1 & \partial_3  Y^1\\
 \partial_1 Y^2& \partial_2  Y^2& \partial_3 Y^2\\
\partial_1 \widetilde{\Omega} Y^3&\partial_2 \widetilde{\Omega} Y^3&\partial_3\widetilde{\Omega} Y^3
\end{matrix} \right|.
\end{cases}
\end{equation}

Next, we apply the scaling operator. 
Similarly, through direct calculation, we have
\begin{equation}\label{2-3}
\begin{cases}
(\partial_t^2 - \Delta)\widetilde{S}Y
=-(\nabla\widetilde{S}Y)^{\top}(\partial_t^2 - \Delta) Y
-(\nabla Y)^{\top}(\partial_t^2 - \Delta)\widetilde{S} Y
- \nabla Sp,\\[-4mm]\\
\nabla\cdot \widetilde{S} Y= -\frac12 \big(\partial_i\widetilde{S} Y^i \partial_j Y^j -
  \partial_iY^j\partial_j\widetilde{S} Y^i\big)
  -\frac12 \big(\partial_i Y^i \partial_j\widetilde{S} Y^j -
  \partial_i \widetilde{S} Y^j\partial_j Y^i\big) \\[-4mm]\\
 \qquad\qquad\ - \left|
\begin{matrix}
 \partial_1 \widetilde{S} Y^1 &\partial_2 \widetilde{S} Y^1 & \partial_3 \widetilde{S} Y^1\\
 \partial_1 Y^2& \partial_2  Y^2& \partial_3 Y^2\\
\partial_1 Y^3&\partial_2 Y^3&\partial_3Y^3
\end{matrix} \right|
- \left|
\begin{matrix}
 \partial_1 Y^1 &\partial_2  Y^1 & \partial_3  Y^1\\
 \partial_1 \widetilde{S} Y^2& \partial_2 \widetilde{S} Y^2& \partial_3 \widetilde{S} Y^2\\
\partial_1 Y^3&\partial_2 Y^3&\partial_3Y^3
\end{matrix} \right|
\\[-4mm]\\
\qquad\qquad\ - \left|
\begin{matrix}
 \partial_1 Y^1 &\partial_2  Y^1 & \partial_3  Y^1\\
 \partial_1 Y^2& \partial_2  Y^2& \partial_3 Y^2\\
\partial_1 \widetilde{S} Y^3&\partial_2 \widetilde{S} Y^3&\partial_3\widetilde{S} Y^3
\end{matrix} \right|.
\end{cases}
\end{equation}
Using \eqref{2-2} and
\eqref{2-3} inductively, we have
\begin{eqnarray}\label{Elasticity-D}
(\partial_t^2 - \Delta) \Gamma^\alpha Y
 = -\nabla\Gamma^\alpha p-\ \sum_{\beta + \gamma
 = \alpha} C_{\alpha}^\beta(\nabla\Gamma^\beta Y)^{\top}(\partial_t^2 - \Delta)\Gamma^\gamma Y,
\end{eqnarray}
together with the constraint
\begin{align}\label{Struc-2}
\nabla\cdot \Gamma^\alpha Y
 =& -\frac12 \sum_{\beta + \gamma =\alpha} C_{\alpha}^\beta
\big(\partial_i\Gamma^\beta Y^i \partial_j\Gamma^\gamma Y^j -
  \partial_i\Gamma^\gamma Y^j\partial_j\Gamma^\beta Y^i\big)\\\nonumber
  &-\sum_{\beta+\gamma+\iota=\alpha}C_\alpha  ^\beta C_{\alpha-\beta}  ^\gamma \left|
\begin{matrix}
 \partial_1\Gamma^\beta Y^1 &\partial_2 \Gamma^\beta Y^1 & \partial_3 \Gamma^\beta Y^1\\
 \partial_1\Gamma ^\gamma Y^2& \partial_2 \Gamma ^\gamma Y^2& \partial_3\Gamma ^\gamma Y^2\\
\partial_1\Gamma ^\iota Y^3&\partial_2\Gamma^\iota Y^3&\partial_3\Gamma^\iota Y^3
\end{matrix} \right|.
\end{align}
The expression \eqref{Struc-2} keeps the cancellation structure in \eqref{Struc-1}.
This  will be of importance for the future argument.

Now we present the main step of the proof of the Theorem \ref{GlobalW}  as follows: for initial data
satisfying the constraints in Theorem \ref{GlobalW}, we will prove
that
\begin{eqnarray}\nonumber
\mathcal{E}_{\kappa}^\prime(t) \leq \frac{C_0}{4}\langle
t\rangle^{-1}\mathcal{E}_{\kappa}(t)
\big(\mathcal{E}_{\kappa-3}^{\frac{1}{2}}(t)+\mathcal{F}_{\kappa-3}^{\frac{1}{2}}(t)
\big),
\end{eqnarray}
which is given in \eqref{ES-2}
 and
\begin{eqnarray}\nonumber
\mathcal{E}_{\kappa - 3}^\prime(t) \leq \frac{C_0}{4}\langle
t\rangle^{-\frac{3}{2}} \mathcal{E}^{\frac12}_{\kappa -3}(t)
\mathcal{E}^{\frac12}_{\kappa}(t)
\big(\mathcal{E}_{\kappa-3}^{\frac{1}{2}}(t)+\mathcal{F}_{\kappa-3}^{\frac{1}{2}}(t)\big)
\end{eqnarray}
which is given in \eqref{LOEE-1} and
\begin{eqnarray}\nonumber
\mathcal{F}_{\kappa - 3}^\prime(t) \leq \frac{C_0}{4}\langle
t\rangle^{-\frac{3}{2}} \mathcal{F}^{\frac{1}{2}}_{\kappa -3}(t)
\mathcal{E}^{\frac12}_{\kappa}(t)
\big(\mathcal{E}_{\kappa-3}^{\frac{1}{2}}(t)+\mathcal{F}_{\kappa-3}^{\frac{1}{2}}(t)\big)
\end{eqnarray}
which is given in \eqref{B-3},
 for some absolute
positive constant $C_0$ depending only on $\kappa$.
Thus we can combine to conclude the following two differential inequalities:
\begin{eqnarray}\nonumber
\mathcal{E}_{\kappa}^\prime(t) \leq \frac{C_0}{4}\langle
t\rangle^{-1}\mathcal{E}_{\kappa}(t)
\big(\mathcal{E}_{\kappa-3}(t)+\mathcal{F}_{\kappa-3}(t)
\big)^{\frac{1}{2}}
\end{eqnarray}
and
\begin{eqnarray}\nonumber
\big(\mathcal{E}_{\kappa - 3}+\mathcal{F}_{\kappa - 3}\big)^\prime(t) \leq
\frac{C_0}{4}\langle t\rangle^{-\frac{3}{2}}
\mathcal{E}^{\frac12}_{\kappa}(t)
\big(\mathcal{E}_{\kappa-3}(t)+\mathcal{F}_{\kappa-3}(t)\big).
\end{eqnarray}
Then it is easy to show that the
bounds for $\mathcal{E}_{\kappa - 3}^{\frac{1}{2}}$ and
$\mathcal{E}_{\kappa}^{\frac{1}{2}}$ given in the theorem can never
be reached, which will yield the global existence result and
complete the proof of Theorem \ref{GlobalW}.
For the details, we refer to \cite{Lei16}.

So from now on our main goal is to show the three \textit{a
priori} differential inequalities mentioned above. The highest order
one will be done in Section \ref{HEE} and the lower order one will
be done in Section \ref{LEE}. Since we can make $C_0$ arbitrarily
large and $\mu$ arbitrarily small, we will always assume that
$\mathcal{E}_{\kappa - 3}^{\frac{1}{2}} \ll 1$ and
$\mathcal{F}_{\kappa - 3}^{\frac{1}{2}} \ll 1$ in the remaining of
this paper.

Finally, we clarify that the notation $X\lesssim Y$ is to indicate $X\leq CY$ for some constant $C> 0$.
Without specification, the constant may depends on $\kappa$, but not depend on time $t$ and $\delta$.
\section{Weighted anisotropic Sobolev inequalities}\label{ASI}

We first study the weighted anisotropic Sobolev type inequalities in thin domain.
\begin{lem}\label{Sob1}
For all $g(x_3)\in H^1(\mathbb{T}^\delta)$
with $\mathbb{T}^\delta=[0,\delta]$, there holds
\begin{align*}
\| g \|_{L^\infty(\mathbb{T}^\delta)}\lesssim
\delta^{-\frac12}\| g \|_{L^2(\mathbb{T}^\delta)}
+\delta^{\frac12}\| \partial_3 g \|_{L^2(\mathbb{T}^\delta)},
\end{align*}
provided the right hand side is finite.

\end{lem}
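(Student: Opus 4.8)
The plan is to prove this one-dimensional Sobolev inequality on the interval $\mathbb{T}^\delta = [0,\delta]$ by the standard "fundamental theorem of calculus plus averaging" argument, keeping careful track of how each integration in $x_3$ contributes a power of $\delta$. The target inequality is scale-invariant: writing $x_3 = \delta s$ with $s \in [0,1]$ and $\tilde g(s) = g(\delta s)$, the norms transform as $\|g\|_{L^\infty(\mathbb{T}^\delta)} = \|\tilde g\|_{L^\infty([0,1])}$, $\|g\|_{L^2(\mathbb{T}^\delta)} = \delta^{1/2}\|\tilde g\|_{L^2([0,1])}$, and $\|\partial_3 g\|_{L^2(\mathbb{T}^\delta)} = \delta^{-1/2}\|\partial_s \tilde g\|_{L^2([0,1])}$, so the claimed estimate is exactly the classical $H^1([0,1]) \hookrightarrow L^\infty([0,1])$ embedding transplanted back. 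One may either run the whole argument directly on $[0,\delta]$ or reduce to $[0,1]$ by this change of variables; I would do it directly on $[0,\delta]$ to make the $\delta$-bookkeeping transparent.

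First I would fix $x, y \in [0,\delta]$ and write $g(x)^2 - g(y)^2 = \int_y^x \partial_3(g^2)\,dx_3 = 2\int_y^x g\,\partial_3 g\,dx_3$, so that
\begin{equation}\nonumber
g(x)^2 \le g(y)^2 + 2\int_0^\delta |g|\,|\partial_3 g|\,dx_3 \le g(y)^2 + 2\|g\|_{L^2(\mathbb{T}^\delta)}\|\partial_3 g\|_{L^2(\mathbb{T}^\delta)}
\end{equation}
by Cauchy--Schwarz. Next I would average this inequality in $y$ over $[0,\delta]$: since $\frac{1}{\delta}\int_0^\delta g(y)^2\,dy = \delta^{-1}\|g\|_{L^2(\mathbb{T}^\delta)}^2$, taking the supremum over $x$ gives
\begin{equation}\nonumber
\|g\|_{L^\infty(\mathbb{T}^\delta)}^2 \le \delta^{-1}\|g\|_{L^2(\mathbb{T}^\delta)}^2 + 2\|g\|_{L^2(\mathbb{T}^\delta)}\|\partial_3 g\|_{L^2(\mathbb{T}^\delta)}.
\end{equation}
Finally I would apply Young's inequality $2ab \le \delta^{-1}a^2 + \delta b^2$ to the cross term with $a = \|g\|_{L^2(\mathbb{T}^\delta)}$, $b = \|\partial_3 g\|_{L^2(\mathbb{T}^\delta)}$, obtaining $\|g\|_{L^\infty}^2 \lesssim \delta^{-1}\|g\|_{L^2}^2 + \delta\|\partial_3 g\|_{L^2}^2$, and take square roots, using $\sqrt{a^2+b^2} \le a + b$ to reach the stated form.

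There is no serious obstacle here; this is a routine computation. The only point requiring a modicum of care is the $\delta$-homogeneity: one must average in $y$ over the full interval of length $\delta$ (not a fixed-length interval) so that a clean factor $\delta^{-1}$ appears in front of $\|g\|_{L^2}^2$, and one must choose the weights in Young's inequality so that the $\partial_3 g$ term comes out with a factor $+\delta$ rather than $\delta^{-1}$. A minor technical remark: the argument as written assumes $g$ is (absolutely) continuous so that the fundamental theorem of calculus applies pointwise; for general $g \in H^1(\mathbb{T}^\delta)$ one first notes $H^1$ functions in one dimension have absolutely continuous representatives, or argues by density of smooth functions and passes to the limit, the right-hand side being finite by hypothesis.
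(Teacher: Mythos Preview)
Your proof is correct. The paper takes exactly the scaling route you mention in your first paragraph: it sets $\tilde g(x_3)=g(\delta x_3)$ on $[0,1]$, invokes the standard embedding $\|\tilde g\|_{L^\infty([0,1])}\lesssim \|\tilde g\|_{L^2([0,1])}+\|\partial_3\tilde g\|_{L^2([0,1])}$, and scales back; your direct argument on $[0,\delta]$ simply unpacks that embedding inline, so the two are the same in content.
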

\begin{proof}
Define
\begin{align*}
\tilde{g}(x_3)=g(\delta x_3)\ \textrm{in}\ [0,1].
\end{align*}
By Sobolev imbedding, there holds
\begin{align*}
\| \tilde{g} \|_{L^\infty([0,1])} \lesssim
\| \tilde{g} \|_{L^2([0,1])}+ \| \partial_3\tilde{g} \|_{L^2([0,1])}.
\end{align*}
Consequently,
\begin{align*}
\| g \|_{L^\infty(\mathbb{T}^\delta)}
=\| \tilde{g} \|_{L^\infty([0,1])}
&\lesssim
\| \tilde{g} \|_{L^2([0,1])}+ \| \partial_3\tilde{g} \|_{L^2([0,1])}\\
&= \delta^{-\frac 12} \|g \|_{L^2(\mathbb{T}^\delta)}
+ \delta^{\frac 12}\| \partial_3 g \|_{L^2(\mathbb{T}^\delta)} .
\end{align*}
This gives the desired result of this lemma.
\end{proof}

\begin{lem}\label{Sob2}
For all $f(x)\in H^3(\Omega_\delta)$
with $\Omega_\delta=\mathbb{R}^2\times \mathbb{T}^\delta$, there holds
\begin{align*}
\| f \|_{L^\infty(\Omega_\delta)}\lesssim
\sum_{|a|\leq 2}\delta^{-\frac12}\| \partial_v^a f \|_{L^2(\Omega_\delta)}
+\sum_{|a|\leq 2}\delta^{\frac12}\| \partial_v^a \partial_3f \|_{L^2(\Omega_\delta)},
\end{align*}
provided the right hand side is finite.
\end{lem}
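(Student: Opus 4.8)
The plan is to reduce the three‑dimensional $L^\infty$ bound of Lemma \ref{Sob2} to a combination of a two‑dimensional Sobolev embedding in the horizontal variables $x_v=(x_1,x_2)$ and the one‑dimensional estimate already proved in Lemma \ref{Sob1} for the thin direction $x_3$. First I would fix $x_3\in\mathbb{T}^\delta$ and regard $f(\cdot,x_3)$ as a function on $\mathbb{R}^2$; by the standard Sobolev embedding $H^2(\mathbb{R}^2)\hookrightarrow L^\infty(\mathbb{R}^2)$ one gets
\begin{align*}
\|f(\cdot,x_3)\|_{L^\infty(\mathbb{R}^2)}\lesssim \sum_{|a|\le 2}\|\partial_v^a f(\cdot,x_3)\|_{L^2(\mathbb{R}^2)}.
\end{align*}
Taking the supremum over $x_3$ on the left is not directly allowed, so instead I would take the $L^\infty$ norm in $x_3$ of both sides after first applying Lemma \ref{Sob1} in the $x_3$ variable to each horizontal $L^2$ quantity.

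Concretely, set $G_a(x_3)=\|\partial_v^a f(\cdot,x_3)\|_{L^2(\mathbb{R}^2)}$ for each multi‑index $a$ with $|a|\le 2$. Then $\|f\|_{L^\infty(\Omega_\delta)}\le \sup_{x_3}\|f(\cdot,x_3)\|_{L^\infty(\mathbb{R}^2)}\lesssim \sum_{|a|\le 2}\|G_a\|_{L^\infty(\mathbb{T}^\delta)}$. Applying Lemma \ref{Sob1} to each $G_a$ gives
\begin{align*}
\|G_a\|_{L^\infty(\mathbb{T}^\delta)}\lesssim \delta^{-\frac12}\|G_a\|_{L^2(\mathbb{T}^\delta)}+\delta^{\frac12}\|\partial_3 G_a\|_{L^2(\mathbb{T}^\delta)}.
\end{align*}
By Fubini/Minkowski, $\|G_a\|_{L^2(\mathbb{T}^\delta)}=\|\partial_v^a f\|_{L^2(\Omega_\delta)}$, and since $\partial_3$ commutes with $\partial_v^a$ one has the pointwise bound $|\partial_3 G_a(x_3)|\le \|\partial_v^a\partial_3 f(\cdot,x_3)\|_{L^2(\mathbb{R}^2)}$ (differentiating under the integral in the definition of the $L^2$ norm and using Cauchy–Schwarz), whence $\|\partial_3 G_a\|_{L^2(\mathbb{T}^\delta)}\le \|\partial_v^a\partial_3 f\|_{L^2(\Omega_\delta)}$. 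Summing over $|a|\le 2$ yields exactly the claimed inequality.

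The only genuinely delicate point is the justification that $G_a$ lies in $H^1(\mathbb{T}^\delta)$ with the stated derivative bound, i.e. that one may differentiate the horizontal $L^2$ norm in $x_3$; this follows from $f\in H^3(\Omega_\delta)$ by a routine density/dominated‑convergence argument, writing $G_a(x_3)^2=\int_{\mathbb{R}^2}|\partial_v^a f|^2\,dx_v$ and differentiating to get $G_a G_a' = \int_{\mathbb{R}^2}\partial_v^a f\,\partial_v^a\partial_3 f\,dx_v$, then applying Cauchy–Schwarz (and treating the set where $G_a=0$ separately). Everything else is a bookkeeping of constants, and crucially the powers of $\delta$ come out correctly because Lemma \ref{Sob1} already carries the sharp $\delta^{-\frac12}$ and $\delta^{\frac12}$ weights and the horizontal embedding is $\delta$‑independent. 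I do not expect any serious obstacle here; the main care is simply to apply the one‑dimensional lemma \emph{after} extracting the horizontal $L^\infty$ bound, rather than attempting a direct three‑dimensional embedding, so that the anisotropic $\delta$‑scaling is respected.
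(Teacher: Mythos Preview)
Your proposal is correct and follows essentially the same approach as the paper: apply the two-dimensional Sobolev embedding $H^2(\mathbb{R}^2)\hookrightarrow L^\infty(\mathbb{R}^2)$ at fixed $x_3$, then use Lemma~\ref{Sob1} in the thin direction. The paper's proof is more terse (it phrases the first step after a rescaling $\tilde f(x_v,x_3)=f(x_v,\delta x_3)$ and then simply invokes Lemma~\ref{Sob1}), whereas you spell out carefully the step of differentiating $G_a(x_3)=\|\partial_v^a f(\cdot,x_3)\|_{L^2(\mathbb{R}^2)}$ in $x_3$; but the underlying argument is the same.
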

\begin{proof}
Define $\tilde{f}=f(x_v,\delta x_3)$ in
$\Omega=\mathbb{R}^2\times [0,1], x_v=(x_1, x_2)$. By the Sobolev imbedding in $\mathbb{R}^2$, there holds
\begin{align*}
\tilde{f}(x_v,x_3) \lesssim \sum_{|a|\leq 2}\| \partial_v^a \tilde{f}(\cdot, x_3)\|_{L^2(\mathbb{R}^2)}.
\end{align*}
Then combined with Lemma \ref{Sob1} yields the desired inequality.
\end{proof}

Next we study the weighted Sobolev-type inequalities in thin domain.
They will be used to prove the decay of solutions in $L^\infty$ norm.
\begin{lem}\label{KS-A}
For all $f\in H^3(\Omega_\delta)$, there hold
\begin{align}
\label{K-S-1}
r_v^{\frac12} |f(x)|\lesssim
&\delta^{-\frac12}\sum_{a=0,1} \big( \|\partial_v \Omega^a_3 f\|_{L^2(\Omega_\delta)}
+\|\Omega^a_3 f\|_{L^2(\Omega_\delta)} \big)  \nonumber\\
&+\delta^{\frac12}\sum_{a=0,1} \big( \|\partial_3\partial_v \Omega^a_3 f\|_{L^2(\Omega_\delta)}+\|\partial_3\Omega^a_3 f\|_{L^2(\Omega_\delta)} \big)
,\\
\label{K-S-2}
 r_v^{\frac12} |(t-r)f(x)|&\lesssim
\delta^{-\frac12}\sum_{a=0,1}\big(\|(t-r)\partial_v\Omega^a_3f\|_{L^2(\Omega_\delta)}
+\|(t-r) \Omega^a_3f\|_{L^2(\Omega_\delta)} \big) \nonumber \\
&\quad+\delta^{\frac12}\sum_{a=0,1} \big( \|( t-r) \partial_3\partial_v\Omega^a_3f\|_{L^2(\Omega_\delta)}
+\|(t-r) \partial_3\Omega^a_3f \|_{L^2(\Omega_\delta)} \big),\\
\label{K-S-4}
 r_v^{\frac12} |t-r|^{\frac12}|f(x)|&\lesssim
\delta^{-\frac12}\sum_{a=0,1}\big(\|(t-r)\partial_v\Omega^a_3f\|_{L^2(\Omega_\delta)}
+\|\Omega^a_3f\|_{L^2(\Omega_\delta)} \big) \nonumber \\
&\quad+\delta^{\frac12}\sum_{a=0,1} \big( \|( t-r) \partial_3\partial_v\Omega^a_3f\|_{L^2(\Omega_\delta)}
+\|\partial_3\Omega^a_3f \|_{L^2(\Omega_\delta)} \big),\\
\label{K-S-3}
t \| f\|_{L^\infty(r\leq t/2)}
&\lesssim \delta^{-\frac12}\sum_{|a|\le2}\| (t-r) \partial^a_v f\|_{L^2(\Omega_\delta)}
+\delta^{\frac12}\sum_{|a|\leq 2}\| (t-r) \partial_3\partial^a_v f\|_{L^2(\Omega_\delta)},
\end{align}
provided the right-hand side is finite.
\end{lem}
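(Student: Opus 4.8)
The plan is to prove each of the four inequalities by reducing to a two-dimensional weighted Sobolev (Klainerman-type) inequality in the horizontal variables $x_v=(x_1,x_2)$, and then handling the $x_3$-direction by the rescaling trick already used in Lemmas \ref{Sob1} and \ref{Sob2}. Throughout write $r_v=|x_v|=\sqrt{x_1^2+x_2^2}$, and note that the horizontal rotation $\Omega_3=x_1\partial_2-x_2\partial_1$ acts purely in the $x_v$-plane and commutes with $\partial_3$; the crucial point is that, for the 2D slice, $\{\partial_1,\partial_2,\Omega_3\}$ together with a weight furnish exactly the vector fields needed for the classical 2D decay estimate $r_v^{1/2}|g(x_v)|\lesssim \sum_{a\le 1}\|\partial_v\Omega_3^a g\|_{L^2(\mathbb{R}^2)}+\|\Omega_3^a g\|_{L^2(\mathbb{R}^2)}$ and its $(t-r)$-weighted variants.

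First I would establish \eqref{K-S-1}. Fix $x_3$ and set $g=f(\cdot,x_3)$ as a function on $\mathbb{R}^2$. The standard 2D Klainerman--Sobolev inequality (proved by writing $f$ in polar coordinates $(r_v,\theta)$, using that $\partial_{r_v}$ is a combination of $\partial_v$ and $\Omega_3$ is $\partial_\theta$, integrating $\frac{d}{dr_v}(r_v|g|^2)$ and applying 1D Sobolev in $\theta$) gives the pointwise bound on $r_v^{1/2}|g(x_v)|$ by the $L^2(\mathbb{R}^2)$ norms of $\Omega_3^a g$ and $\partial_v\Omega_3^a g$, $a=0,1$. Squaring, this produces a pointwise-in-$x_3$ bound for $r_v|f(x_v,x_3)|^2$ by $\|\Omega_3^a f(\cdot,x_3)\|_{L^2(\mathbb{R}^2)}$-type quantities, but we still need an $L^\infty$ bound in $x_3$. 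For that I apply Lemma \ref{Sob1} in the $x_3$ variable to the function $x_3\mapsto \|\Omega_3^a f(\cdot,x_3)\|_{L^2(\mathbb{R}^2)}$ etc.; since $\partial_3$ commutes with $\Omega_3$ and with $\partial_v$, the resulting derivative terms are exactly $\delta^{1/2}\|\partial_3\Omega_3^a f\|_{L^2(\Omega_\delta)}$ and $\delta^{1/2}\|\partial_3\partial_v\Omega_3^a f\|_{L^2(\Omega_\delta)}$, while the zeroth-order terms carry $\delta^{-1/2}$. This is exactly the form of \eqref{K-S-1}.

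For \eqref{K-S-2} and \eqref{K-S-4} I would use the same scheme but start from the $(t-r)$-weighted 2D inequality. Here one should be mildly careful: the weight appearing naturally from polar integration is $(t-r_v)$ rather than $(t-r)$ with $r=|x|=\sqrt{r_v^2+x_3^2}$; however on $\Omega_\delta$ one has $|r-r_v|\le x_3\le\delta\lesssim 1$, so $(t-r)$ and $(t-r_v)$ differ by a bounded amount and, after absorbing that bounded difference into the zeroth-order terms (which is why \eqref{K-S-4} has the asymmetric right-hand side with an unweighted $\|\Omega_3^a f\|_{L^2}$ and \eqref{K-S-2} keeps the weight everywhere), the bounds go through. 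Concretely, for \eqref{K-S-2} I apply the 2D estimate to $(t-r_v)f$, noting $\Omega_3[(t-r_v)f]=(t-r_v)\Omega_3 f$ and $\partial_v[(t-r_v)f]=(t-r_v)\partial_v f-\frac{x_v}{r_v}f$, the second term being lower order; then apply Lemma \ref{Sob1} in $x_3$ exactly as before. For \eqref{K-S-4} one interpolates: $r_v^{1/2}|t-r|^{1/2}|f|\le \big(r_v^{1/2}|t-r||f|\big)^{1/2}\big(r_v^{1/2}|f|\big)^{1/2}$, and combine \eqref{K-S-1} with \eqref{K-S-2}, using $\sqrt{AB}\lesssim A+B$.

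Finally, for \eqref{K-S-3}, on the region $r\le t/2$ one has $t-r\ge t/2$, so $t\lesssim t-r$ pointwise there, and it suffices to bound $\|(t-r)f\|_{L^\infty(r\le t/2)}$ — but now we no longer need the rotation $\Omega_3$, just the full horizontal second-order Sobolev embedding $\|g\|_{L^\infty(\mathbb{R}^2)}\lesssim\sum_{|a|\le 2}\|\partial_v^a g\|_{L^2(\mathbb{R}^2)}$ applied to $g=(t-r)f(\cdot,x_3)$, followed once more by Lemma \ref{Sob1} in $x_3$; the derivatives $\partial_v^a[(t-r)f]$ again produce $(t-r)\partial_v^a f$ plus lower-order terms coming from differentiating the weight (these involve $\partial_v r=x_v/r$, which is bounded, and are absorbed). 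The main obstacle I anticipate is not any single estimate but the bookkeeping around the two distinct radial variables $r$ and $r_v$ and around differentiating the weight $(t-r)$ (as opposed to $(t-r_v)$), i.e.\ verifying that every ``error'' term generated — $\frac{x_v}{r_v}f$, $\frac{x_v}{r}f$, commutators of $\partial_v^a$ with $(t-r)$ — is genuinely controlled by the right-hand sides already listed, and in particular that discarding the $\partial_3$ of the weight (which vanishes, since $t-r$ depends on $x_3$ only through $r$, giving $\partial_3(t-r)=-x_3/r$, bounded) costs nothing. Once this is checked, the proof is a routine assembly of the 2D Klainerman--Sobolev inequalities with the $\delta$-rescaling Lemma \ref{Sob1}.
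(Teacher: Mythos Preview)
Your treatment of \eqref{K-S-1} is essentially the paper's: Sobolev on $\mathbb{S}^1$, radial integration, then Lemma~\ref{Sob1}. The differences, and the gaps, appear in the weighted estimates.

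For \eqref{K-S-2} your commutator route does not reach the stated right-hand side. Applying \eqref{K-S-1} to $(t-r_v)f$ and expanding $\partial_v[(t-r_v)f]=(t-r_v)\partial_v f-\tfrac{x_v}{r_v}f$ leaves an \emph{unweighted} contribution $\|\Omega_3^a f\|_{L^2}$; every term on the right of \eqref{K-S-2} carries the weight $(t-r)$, so ``lower order'' is not enough---when $|t-r|\le 1$ there is nothing to absorb it into. The paper never detaches the weight: it starts from $r_v(t-r)^2|f|^2$, writes it as $-\int_{r_v}^\infty r_v\,\partial_{\rho_v}\big[(t-\rho)^2|\Omega_3^a f|^2\big]\,d\rho_v$ after the angular Sobolev step, and then distributes the weight via Cauchy--Schwarz. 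The same direct device with $|t-\rho|$ in place of $(t-\rho)^2$ yields \eqref{K-S-4}; your interpolation $\sqrt{\text{RHS}\eqref{K-S-2}\cdot\text{RHS}\eqref{K-S-1}}\lesssim \text{RHS}\eqref{K-S-2}+\text{RHS}\eqref{K-S-1}$ picks up an unweighted $\|\partial_v\Omega_3^a f\|_{L^2}$ from \eqref{K-S-1} that is not present in the right-hand side of \eqref{K-S-4}, so it proves only a weaker inequality.

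For \eqref{K-S-3} there is a genuine obstruction. Applying the horizontal $H^2\hookrightarrow L^\infty$ embedding to $(t-r)f$ requires control of $\partial_v^2[(t-r)f]$, and $\partial_v^2(t-r)=-\partial_v(x_v/r)$ contains a $1/r$ factor that is singular at the origin and is not dominated by any term on the right-hand side. The paper circumvents this by first inserting a smooth cutoff $\eta(2r/t)$ equal to $1$ on $\{r\le t/2\}$ and supported in $\{r\le 3t/4\}$, where $t-r\ge t/4$; derivatives of the cutoff are $O(t^{-1})$, $O(t^{-2})$ and are supported where $r\sim t$, so all commutator terms produced by Sobolev are harmlessly bounded by $\sum_{|a|\le 2}\|(t-r)\partial_v^a f\|_{L^2}$. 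Replacing your weight-multiplication by this cutoff-multiplication is the missing ingredient.
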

\begin{remark}
It can be roughly seen from Lemma \ref{KS-A} that the decay
of solution in $L^\infty$ norm is $t^{-\frac12}$
assuming the weighted $L^2$ norm $\mathcal{X}_{\kappa}$ is bounded.
This suggests the problem is essentially two dimensional case.
\end{remark}

\begin{remark}
On the right hand side of the above inequalities,
only the traditional derivative and rotational derivative $\Omega_3=x_1\partial_2-x_2\partial_1$ are involved, we do not use $\Omega_1$ and $\Omega_2$.
This also corresponds to the fact that the decay comes from the dispersion of waves along $x_v=(x_1, x_2)$ direction.
\end{remark}

For the three-dimensional isotropic version of the weighted Sobolev type inequalities,
we refer to \cite{Sideris00, ST07}.
For the two-dimensional isotropic version,
we refer to \cite{LSZ13, Lei16}.
\begin{proof}
We only need to prove this lemma for $f \in
C_0^2(\mathbb{R}^2)$. For the general case,
one can use a completion argument.
Let
\begin{align*}
x_v=(x_1, x_2),\
r_v=(x_1^2 + x_2^2)^{\frac12},\
\omega_v=\frac{x_v}{r_v}.
\end{align*}
Employing Sobolev imbedding 
$H^1(\mathbb{S}^1)\hookrightarrow L^\infty(\mathbb{S}^1)$
in horizontal direction,
we write
\begin{align*}
r_v |f(r_v\omega_v, x_3)|^2
&\lesssim \sum_{a=0,1}\int_{\mathbb{S}^1} r_v |\Omega^a_3 f(r_v \omega_v, x_3)|^2d\sigma_v\\
&=- \sum_{a=0,1}\int_{\mathbb{S}^1}d\sigma_v
\int_{r_v}^\infty r_v \partial_{\rho_v} [|\Omega^a_3 f(\rho_v \omega_v, x_3)|^2]d \rho_v\\
&\lesssim \sum_{a=0,1}\int_{\mathbb{S}^1}
\int_{r_v}^\infty|\Omega^a_3f(\rho_v \omega_v, x_3)|
|\partial_{\rho_v}\Omega^a_3 f(\rho_v \omega_v, x_3)| \rho_v d \rho_v d\sigma_v\\
&\lesssim \sum_{a=0,1}\large(\|\partial_v \Omega^a_3 f(\cdot,x_3)\|_{L^2(\mathbb{R}^2)}^2
+\|\Omega^a_3 f(\cdot,x_3)\|_{L^2(\mathbb{R}^2)}^2 \large).
\end{align*}
Combined with Lemma \ref{Sob1} yields \eqref{K-S-1}.

The proof of \eqref{K-S-2} is similar.
By Sobolev imbedding $H^1(\mathbb{S}^1)\hookrightarrow L^\infty(\mathbb{S}^1)$,
one has
\begin{align*}
&r_v (t-r)^2|f(r_v\omega_v,x_3)|^2\lesssim
\sum_{a=0,1}\int_{\mathbb{S}^1}
r_v(t-r)^2
|\Omega^a_3 f(r_v \omega_v, x_3)|^2 d\sigma_v\\
&= -\sum_{a=0,1}\int_{\mathbb{S}^1} d\sigma_v
  \int_{r_v}^\infty
  r_v\partial_{\rho_v}[(t-\rho)^2 |\Omega^a_3 f(\rho_h\omega_v,x_3)|^2] d\rho_v\\
&\lesssim \sum_{a=0,1}\int_{\mathbb{S}^1}\int_{r_v}^\infty
[(t-\rho)^2  |\Omega^a_3 f(\rho_v\omega_v,x_3)||\partial_{\rho_v}
  \Omega^a_3f(\rho_v \omega_v,x_3)|+(t-\rho)
  |\Omega^a_3f(\rho_v\omega_v,x_3)|^2] \rho_v d\rho_v d\sigma\\
&\lesssim \sum_{a=0,1}\int_{\mathbb{S}^1}\int_{r_v}^\infty
[(t-\rho)^2  |\partial_{\rho_v} \Omega^a_3f(\rho_v\omega_v,x_3)|^2
  +(t-\rho)^2 |\Omega^a_3f(\rho_v\omega_v,x_3)|^2] \rho_v d\rho_v d\sigma\\
&\lesssim \sum_{a=0,1}\large(\|(t-r) \partial_v\Omega^a_3f(\cdot,x_3)\|_{L^2(\mathbb{R}^2)}^2
+\|( t-r) \Omega^a_3f(\cdot,x_3)\|_{L^2(\mathbb{R}^2)}^2\large).
\end{align*}
Combined with Lemma \ref{Sob1} yields \eqref{K-S-2}.

The proof of \eqref{K-S-4} is similar to \eqref{K-S-2}.
By Sobolev imbedding $H^1(\mathbb{S}^1)\hookrightarrow L^\infty(\mathbb{S}^1)$,
one has
\begin{align*}
&r_v|t-r||f(r_v\omega_v,x_3)|^2\lesssim
\sum_{a=0,1}\int_{\mathbb{S}^1}
r_v|t-r|
|\Omega^a_3 f(r_v \omega_v, x_3)|^2 d\sigma_v\\
&= -\sum_{a=0,1}\int_{\mathbb{S}^1} d\sigma_v
  \int_{r_v}^\infty
  r_v\partial_{\rho_v}[|t-\rho| |\Omega^a_3 f(\rho_v\omega_v,x_3)|^2] d\rho_v\\
&\lesssim \sum_{a=0,1}\int_{\mathbb{S}^1}\int_{r_v}^\infty
[|t-\rho|  |\Omega^a_3 f(\rho_v\omega_v,x_3)||\partial_{\rho_v}
  \Omega^a_3f(\rho_v \omega_v,x_3)|+
  |\Omega^a_3f(\rho_v\omega_v,x_3)|^2] \rho_v d\rho_v d\sigma\\
&\lesssim \sum_{a=0,1}\int_{\mathbb{S}^1}\int_{r_v}^\infty
[|t-\rho|  |\partial_{\rho_v} \Omega^a_3f(\rho_v\omega_v,x_3)|^2
  + |\Omega^a_3f(\rho_v\omega_v,x_3)|^2] \rho_v d\rho_v d\sigma\\
&\lesssim \sum_{a=0,1}\large(\|(t-r) \partial_v\Omega^a_3f(\cdot,x_3)\|_{L^2(\mathbb{R}^2)}^2
+\|\Omega^a_3f(\cdot,x_3)\|_{L^2(\mathbb{R}^2)}^2\large).
\end{align*}
Combined with Lemma \ref{Sob1} yields \eqref{K-S-4}.

Now we turn our attention to \eqref{K-S-3}.
Let $\eta\in C_0^\infty$, satisfy $\eta(s)=1$ for $s\le 1$, $\eta(s)=0$ for $s\ge3/2$.
Note that one has $t/4 \leq t-r$ on $\mbox{supp}\;\eta(r/( t/2) )$.  Thus, for $r\leq t/2 $, we deduce by Sobolev imbedding $H^2(\mathbb{R}^2)\hookrightarrow L^\infty(\mathbb{R}^2)$ on horizontal direction that
\begin{align*}
t|f(x_v,x_3)|
&=  t \eta(r/ (t/2) )|f(x_v,x_3)|\\[-4mm]\\
&\lesssim \|t\eta(r/ ( t/2 )f(\cdot,x_3)\|_{H^2(\mathbb{R}^2)}\\[-4mm]\\
&\lesssim \sum_{|a|\le2}\|(t-r)\partial^a_v f(\cdot,x_3)\|_{L^2(\mathbb{R}^2)}.
\end{align*}
Further employing Lemma \ref{Sob1} gives \eqref{K-S-3}.
This completes the proof of the lemma.
\end{proof}

Next we state a lemma which concerns the decay of solutions away from the light cone.
Before then, we first state a decay lemma in $\mathbb{R}^2$.
\begin{lem} \label{sob}
Assume $f\in H^2(\mathbb{R}^2)$.
There holds
\begin{equation*}
\|f\|_{L^\infty(\mathbb{R}^2)}
\lesssim \|\partial_v f\|_{L^2(\mathbb{R}^2)}
\ln^{\frac{1}{2}}\big(e+ t\big) + \frac{1}{1 + t}\big(\|f\|_{L^2(\mathbb{R}^2)} +
\|\partial_v^2f\|_{L^2(\mathbb{R}^2)}\big).
\end{equation*}
\end{lem}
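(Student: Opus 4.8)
\textbf{Proof proposal for Lemma \ref{sob}.}

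The plan is to prove the logarithmic Sobolev-type inequality in $\mathbb{R}^2$ by a Littlewood–Paley (or dyadic frequency) decomposition of $f$, splitting the frequency range into low, medium and high blocks with cutoffs chosen in terms of the parameter $t$. First I would write $f = \sum_{j \in \mathbb{Z}} \Delta_j f$ in a Littlewood–Paley decomposition, and recall the two-dimensional Bernstein inequality $\|\Delta_j f\|_{L^\infty(\mathbb{R}^2)} \lesssim 2^{j} \|\Delta_j f\|_{L^2(\mathbb{R}^2)}$, together with the fact that $\|\Delta_j f\|_{L^2} \lesssim 2^{-j}\|\partial_v f\|_{L^2}$ for $j \ge 0$ and $\|\Delta_j f\|_{L^2} \lesssim 2^{j}\|f\|_{L^2}$ (via $\|\Delta_j f\|_{L^2}\lesssim 2^{-j}\|\partial_v f\|_{L^2}$ using the derivative gains a factor $2^j$ on frequency block $j$), and similarly $\|\Delta_j f\|_{L^2}\lesssim 2^{-2j}\|\partial_v^2 f\|_{L^2}$ for $j\ge 0$.

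The key step is the choice of the two thresholds: pick an integer $N$ with $2^N \sim 1+t$. For the very low frequencies $j \le 0$, sum $\sum_{j\le 0} 2^j \|\Delta_j f\|_{L^2} \lesssim \|f\|_{L^2}$, which is harmless but I actually want the sharper bound; here I would instead bound $\sum_{j \le 0}\|\Delta_j f\|_{L^\infty}$ crudely by $\|f\|_{L^2}$ up to a constant (the number of such blocks with nontrivial contribution being controlled, or using $\ell^1$-in-$j$ summability against the geometric factor $2^j$). For the intermediate frequencies $0 \le j \le N$, use Bernstein and $\|\Delta_j f\|_{L^2}\lesssim 2^{-j}\|\partial_v f\|_{L^2}$ to get $\|\Delta_j f\|_{L^\infty} \lesssim \|\partial_v f\|_{L^2}$, so summing over the $\sim N \sim \log_2(e+t)$ blocks yields the term $\|\partial_v f\|_{L^2}\ln^{1/2}(e+t)$ — and here one gains the square-root by a Cauchy–Schwarz trick in $j$, pairing $\sum_{0\le j\le N} \|\Delta_j f\|_{L^\infty} \le N^{1/2}\big(\sum_j \|\Delta_j f\|_{L^\infty}^2\big)^{1/2}$ and then using almost-orthogonality $\sum_j \|\Delta_j f\|_{L^\infty}^2 \lesssim \sum_j 2^{2j}\|\Delta_j f\|_{L^2}^2 \lesssim \|\partial_v f\|_{L^2}^2$. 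For the high frequencies $j \ge N$, use Bernstein with the two-derivative bound: $\|\Delta_j f\|_{L^\infty} \lesssim 2^{j}\|\Delta_j f\|_{L^2} \lesssim 2^{-j}\|\partial_v^2 f\|_{L^2}$, and summing the geometric series gives $2^{-N}\|\partial_v^2 f\|_{L^2} \sim (1+t)^{-1}\|\partial_v^2 f\|_{L^2}$. Collecting the three pieces gives exactly the claimed inequality, with the $\|f\|_{L^2}$ contribution absorbed into the $(1+t)^{-1}$ term after a harmless reshuffling at low frequency (bounding the finitely-summable low block by $(1+t)^{-1}\|f\|_{L^2}$ is not quite right — rather one keeps $\|f\|_{L^2}$ without the $t$ gain for the lowest blocks, and the statement as written already permits $\frac{1}{1+t}\|f\|_{L^2}$, which is weaker than $\|f\|_{L^2}$, so one should instead route the low frequencies through the $\partial_v f$ term; I would just estimate $\sum_{j\le 0}\|\Delta_j f\|_{L^\infty}\lesssim \|\partial_v f\|_{L^2}$ directly using $\|\Delta_j f\|_{L^\infty}\lesssim 2^{j}\|\Delta_j f\|_{L^2}\lesssim \|\partial_v f\|_{L^2}$, whose geometric sum in $j\le 0$ converges).

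The main obstacle I anticipate is obtaining the \emph{square root} of the logarithm rather than the full logarithm: a naive triangle-inequality sum over the $\sim \log(e+t)$ intermediate blocks produces a factor $\ln(e+t)$, and the improvement to $\ln^{1/2}(e+t)$ genuinely requires the Cauchy–Schwarz-in-$j$ step combined with the square-function/almost-orthogonality estimate $\sum_j \|\Delta_j f\|_{L^\infty}^2 \lesssim \|\partial_v f\|_{L^2}^2$. One must be slightly careful that the $L^\infty$ square function bound holds — this follows from Bernstein blockwise plus genuine $L^2$ orthogonality of the $\Delta_j$, since $\sum_j \|\Delta_j f\|_{L^\infty}^2 \lesssim \sum_j 2^{2j}\|\Delta_j f\|_{L^2}^2 = \sum_j \|\Delta_j (\nabla_v\text{-type operator}) f\|_{L^2}^2 \lesssim \|\partial_v f\|_{L^2}^2$ by Plancherel. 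Everything else is a routine bookkeeping of geometric series with the threshold $2^N\sim 1+t$.
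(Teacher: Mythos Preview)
Your overall approach --- Littlewood--Paley decomposition, Bernstein blockwise, and Cauchy--Schwarz in $j$ to extract the $\ln^{1/2}$ --- is exactly the paper's, and your medium- and high-frequency pieces are correct. But your low-frequency treatment has a real gap.

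You split at $j=0$ and $j=N$. For $j\le 0$ you arrive at either $\sum_{j\le 0} 2^j\|\Delta_j f\|_{L^2}\lesssim\|f\|_{L^2}$ (via $\|\Delta_j f\|_{L^2}\le\|f\|_{L^2}$ and the geometric series $\sum_{j\le 0}2^j$), or you try to route through $\|\partial_v f\|_{L^2}$ and claim a ``geometric sum converges.'' The first option yields $\|f\|_{L^2}$ with no $(1+t)^{-1}$ decay, so it does \emph{not} prove the stated inequality: the term $\frac{1}{1+t}\|f\|_{L^2}$ on the right is \emph{stronger} (smaller) than $\|f\|_{L^2}$, not weaker, so you have the implication backwards. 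The second option is simply false: from $\|\Delta_j f\|_{L^\infty}\lesssim 2^j\|\Delta_j f\|_{L^2}\lesssim 2^j\cdot 2^{-j}\|\Delta_j\partial_v f\|_{L^2}=\|\Delta_j\partial_v f\|_{L^2}$ you obtain a bound that is uniform in $j$, not geometric, and summing over all $j\le 0$ does not converge by this route.

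The fix --- and this is precisely what the paper does --- is to place the low-frequency cutoff at $j=-N$ rather than $j=0$. Then Bernstein applied to the entire low-pass block gives
\[
\|S_{-N}f\|_{L^\infty}\lesssim 2^{-N}\|S_{-N}f\|_{L^2}\lesssim 2^{-N}\|f\|_{L^2}\sim(1+t)^{-1}\|f\|_{L^2},
\]
and the middle range $-N\le j\le N$ now has $\sim 2N$ terms, so your Cauchy--Schwarz step still produces $\sqrt{2N}\,\|\partial_v f\|_{L^2}\sim\ln^{1/2}(e+t)\,\|\partial_v f\|_{L^2}$. The high-frequency part is unchanged. With this symmetric split the proof closes exactly as you outlined.
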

This lemma comes from \cite{Lei16} (Lemma 3.1).
\begin{proof}
We first introduce the Littlewood-Paley theory on $\mathbb{R}^2$. Let $\phi$ be a
smooth function supported in $\{\tau \in \mathbb{R}^+: \frac{3}{4}
\leq \tau \leq \frac{8}{3}\}$ such that
\begin{equation}\nonumber
\sum_{j\in \mathbb{Z}}\phi(2^{-j}\tau) = 1.
\end{equation}
For $f \in L^2(\mathbb{R}^2)$, we set
\begin{equation}\nonumber
\Delta_j f = \mathcal{F}^{-1}\big(\phi(2^{-j}|\xi|)\mathcal{F}(f)\big)
\end{equation}
and
\begin{equation}\nonumber
S_{j}f = \sum_{- \infty < k \leq j - 1,\ k \in \mathbb{Z}}\Delta_kf.
\end{equation}
Here $\mathcal{F}$ denotes the usual Fourier transformation on $\mathbb{R}^2$ and
$\mathcal{F}^{-1}$ the inverse Fourier transformation on $\mathbb{R}^2$.

By the following well-known Bernstein inequality on $\mathbb{R}^2$:
\begin{eqnarray}\nonumber
\|\Delta_j f\|_{L^\infty(\mathbb{R}^2)} \lesssim 2^{j}\|\Delta_j f\|_{L^2(\mathbb{R}^2)},\quad
\|S_j f\|_{L^\infty(\mathbb{R}^2)} \lesssim 2^{j}\|S_j f\|_{L^2(\mathbb{R}^2)},
\end{eqnarray}
one has
\begin{eqnarray}\nonumber
\|f\|_{L^\infty(\mathbb{R}^2)} &=
& \|\sum_j\Delta_j f\|_{L^\infty(\mathbb{R}^2)}\\\nonumber &\lesssim&
  2^{- N}\|S_{- N}f\|_{L^2(\mathbb{R}^2)} + \sum_{- N \leq j \leq
  N }2^j\|\Delta_jf\|_{L^2(\mathbb{R}^2)} + \sum_{j \geq N +
  1}2^j\|\Delta_jf\|_{L^2(\mathbb{R}^2)}\\\nonumber
&\lesssim& 2^{- N}\|f\|_{L^2(\mathbb{R}^2)} + \sqrt{2N}\|\partial_v f\|_{L^2(\mathbb{R}^2)}
+ 2^{- N}\|\partial_v^2f\|_{L^2(\mathbb{R}^2)}.
\end{eqnarray}
Choosing $N = \frac{\ln(e + t)}{\ln 2}$, one has
\begin{equation*}
\|f\|_{L^\infty(\mathbb{R}^2)} \lesssim \|\partial_v f\|_{L^2(\mathbb{R}^2)}\ln^{\frac{1}{2}}\big(e
+ t\big) + \frac{1}{1 + t}\big(\|f\|_{L^2(\mathbb{R}^2)} +
\|\partial_v^2f\|_{L^2(\mathbb{R}^2)}\big).
\end{equation*}
This finishes the proof of the lemma.
\end{proof}

\begin{lem}\label{K-S-O}
Let $t\geq 1$, $f\in H^4(\Omega_\delta)$.
There holds
\begin{align*}
&t \ln^{-\frac12}(e+t)\|  \nabla f\|_{L^\infty(r\leq t/2)} \\[-4mm]\\\nonumber
&\lesssim
\delta^{-\frac12} \|(t-r)\nabla^2 f\|_{L^2(\Omega_\delta)}
+\delta^{\frac12} \|(t-r)\partial_3\partial_v \nabla f\|_{L^2(\Omega_\delta)} \\[-4mm]\\\nonumber
&\quad +\delta^{-\frac12} \sum_{|a|\leq 2}\|\nabla Z^a f\|_{L^2(\Omega_\delta)}
+\delta^{\frac12} \sum_{|a|\leq 2}\|\nabla^2 Z^a f\|_{L^2(\Omega_\delta)}.
\end{align*}
\end{lem}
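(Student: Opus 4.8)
The plan is to localize to the cone interior $\{r\le t/2\}$ with a smooth cutoff, to strip off the $x_3$-dependence via the one-dimensional inequality of Lemma \ref{Sob1}, and then to apply the logarithmically refined two-dimensional Sobolev inequality of Lemma \ref{sob} in the horizontal variables. Fix $\eta\in C_0^\infty$ with $\eta(s)=1$ for $s\le1$ and $\eta(s)=0$ for $s\ge3/2$, and set $g=\eta(2r/t)\nabla f$, so that $g=\nabla f$ on $\{r\le t/2\}$ while $\mathrm{supp}\,g\subset\{r\le 3t/4\}$. On this support one has $t-r\ge t/4$, hence $t\eta(2r/t)\le t\le 4(t-r)$; moreover each horizontal derivative hitting the cutoff costs only $|\partial_v^k\eta(2r/t)|\lesssim t^{-k}\lesssim1$, and $|\partial_3\eta(2r/t)|\lesssim\delta t^{-2}$, all using $t\ge1$.

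First I would apply Lemma \ref{Sob1} in $x_3$ and take the supremum over $x_v$, and then bound each resulting $L^\infty_{x_v}$-norm of an $L^2(\mathbb{T}^\delta)$-norm by Lemma \ref{sob}, using that $x_v\mapsto\|G(x_v,\cdot)\|_{L^2(\mathbb{T}^\delta)}$ lies in $H^2(\mathbb{R}^2)$ with $\big\|\partial_v^k\|G(x_v,\cdot)\|_{L^2(\mathbb{T}^\delta)}\big\|_{L^2(\mathbb{R}^2)}\le\|\partial_v^k G\|_{L^2(\Omega_\delta)}$ by Minkowski's inequality. Carrying this out for $G\in\{g,\partial_3g\}$, then multiplying through by $t\ln^{-\frac12}(e+t)$ and using $t\ge1$ (so that $\frac{t}{1+t}\ln^{-\frac12}(e+t)\lesssim1$), reduces the lemma to showing that, with all $L^2$-norms taken in $x$ over $\Omega_\delta$,
\[
\delta^{-\frac12}\big(t\|\partial_vg\|_{L^2}+\|g\|_{L^2}+\|\partial_v^2g\|_{L^2}\big)+\delta^{\frac12}\big(t\|\partial_v\partial_3g\|_{L^2}+\|\partial_3g\|_{L^2}+\|\partial_v^2\partial_3g\|_{L^2}\big)
\]
is bounded by the right-hand side of the statement.

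Each of these six terms is then expanded by the Leibniz rule on $g=\eta(2r/t)\nabla f$. The contributions in which no derivative lands on the cutoff are the principal ones: $\eta\,\partial_v^2\nabla f$ and $\eta\,\partial_v^2\partial_3\nabla f$ are pointwise $\lesssim\sum_{|a|\le2}|\nabla Z^af|$ and $\lesssim\sum_{|a|\le2}|\nabla^2 Z^af|$ respectively (since $\partial_v^2 f=Z^af$ with $|a|=2$ and $\partial_v,\partial_3$ are components of $\nabla$); $t\eta\,\partial_v\nabla f\lesssim(t-r)|\nabla^2f|$ and $t\eta\,\partial_v\partial_3\nabla f\lesssim(t-r)|\partial_3\partial_v\nabla f|$ by $t\eta\le4(t-r)$ on the support; and $\|g\|_{L^2}\le\|\nabla f\|_{L^2}$, $\eta\,\partial_3\nabla f\lesssim|\nabla^2f|$. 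These yield exactly the four terms on the right-hand side of the lemma. The remaining contributions, where $k\ge1$ derivatives fall on $\eta(2r/t)$, carry an extra factor $t^{-k}\lesssim1$ (or $\delta t^{-k}$, absorbed by the outer $\delta^{\pm\frac12}$ together with $\delta\lesssim1$), lose at least one derivative of $f$, and are supported in $\{r\le 3t/4\}$; each is therefore dominated by $\delta^{-\frac12}\sum_{|a|\le2}\|\nabla Z^af\|_{L^2}+\delta^{\frac12}\sum_{|a|\le2}\|\nabla^2 Z^af\|_{L^2}$, since the cutoff support only shrinks the $L^2$ integration domain. Adding up the pieces gives the claim.

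The one point that genuinely requires care is the weighted term $\delta^{\frac12}\|(t-r)\partial_3\partial_v\nabla f\|_{L^2}$: it must emerge precisely as the principal part of $\delta^{\frac12}t\|\partial_v\partial_3g\|_{L^2}$, which is why the cutoff is chosen to be supported where $t\le4(t-r)$, and why Lemma \ref{sob}—which places the logarithm only on the single first-order horizontal derivative and leaves a $(1+t)^{-1}$ gain on the second-order terms—must be used in place of the plain embedding $H^2(\mathbb{R}^2)\hookrightarrow L^\infty(\mathbb{R}^2)$; the latter would force a third derivative under the $(t-r)$ weight, which is not available on the right-hand side. Everything else is the same $\delta$-power bookkeeping as in the proof of Lemma \ref{KS-A}.
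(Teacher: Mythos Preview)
Your approach is essentially the same as the paper's: the paper first packages Lemma~\ref{Sob1} and Lemma~\ref{sob} into the single bound \eqref{so}, then introduces a radial cutoff $(1-\varphi^t)$ supported in $\{r\le 3t/4\}$ and equal to $1$ on $\{r\le 2t/3\}$, and applies \eqref{so} to the localized function, using $t\lesssim t-r$ on the support to produce the weighted terms. Your write-up is considerably more explicit about the Leibniz expansion and the handling of the commutator terms (in particular the $\partial_3$-derivative of the cutoff), but the method is the same.
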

\begin{proof}
By Lemma \ref{Sob1} and  Lemma \ref{sob}, there holds
\begin{align} \label{so}
\|f\|_{L^\infty(\Omega_\delta)}
\lesssim
& \ln^{\frac{1}{2}}(e+ t)\big(\delta^{-\frac12}\|\partial_v f\|_{L^2(\Omega_\delta)}
+\delta^{\frac12} \|\partial_3\partial_v f\|_{L^2(\Omega_\delta)})
\\\nonumber
&+ \frac{1}{1 + t}\delta^{-\frac12}(\|f\|_{L^2(\Omega_\delta)} +\|\partial^2_v f\|_{L^2(\Omega_\delta)}) \\\nonumber
&+\frac{1}{1 + t}\delta^{\frac12}(\| \partial_3f\|_{L^2(\Omega_\delta)} +\|\partial_3\partial^2_vf\|_{L^2(\Omega_\delta)}).
\end{align}
Next let us choose a radial cutoff function $\varphi \in
C_0^\infty(\Omega_\delta)$ which satisfies
\begin{equation}\nonumber
\varphi = \begin{cases}1,\quad {\rm if}\ r \geq\frac{3}{4} \\
0,\quad {\rm if}\ r \leq \frac{2}{3}\end{cases},\quad |\nabla\varphi| \leq 100.
\end{equation}
For each fixed $t \geq 1$, let $\varphi^t(y) = \varphi(y/t)$. Clearly, one has
$$\varphi^t(y) \equiv 1\ \ {\rm for}\ r\geq\frac{3t}{4}
,\quad \varphi^t(y) \equiv 0\ \ {\rm
for}\ r \leq \frac{2t}{3}$$ and $$|\nabla\varphi^t(y)| \leq
100\ t^{-1} \lesssim \langle t\rangle^{-1}.$$
Then applying \eqref{so} to $(1-\varphi^t) f$ immediately yields the lemma.
\end{proof}

\section{Elliptic estimate in thin domain}\label{EEITD}
Next, we study the elliptic estimates in the thin domain.
\begin{lem}\label{OP1}
For $p(x)$ defined on the $\Omega_\delta$ satisfying
\begin{align*}
\Delta p=\partial_v f,
\end{align*}
with periodic boundary condition. Then
there holds that
\begin{align*}
&\|\partial_v p \|_{L^2(\Omega_\delta)} \leq \| f \|_{L^2(\Omega_\delta)}, \\
&\|\partial_3 p \|_{L^2(\Omega_\delta)} \leq \| f \|_{L^2(\Omega_\delta)}.
\end{align*}
\end{lem}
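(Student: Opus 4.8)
The plan is to use the periodic boundary condition in the $x_3$ variable together with the flatness of the domain $\Omega_\delta = \mathbb{R}^2 \times \mathbb{T}^\delta$, so that a Fourier/Plancherel argument handles both estimates uniformly in $\delta$. First I would test the equation $\Delta p = \partial_v f$ against $p$ itself (after a standard cutoff/approximation argument to justify integration by parts, since $p$ need only be defined up to the decay needed for the $L^2$ pairings to make sense). Integrating by parts on $\Omega_\delta$, using that the boundary terms on $\{x_3 = 0\}$ and $\{x_3 = \delta\}$ cancel by periodicity, gives
\begin{align*}
\int_{\Omega_\delta} |\nabla p|^2 \, dy = -\int_{\Omega_\delta} p \, \Delta p \, dy = -\int_{\Omega_\delta} p \, \partial_v f \, dy = \int_{\Omega_\delta} \partial_v p \, f \, dy \le \|\partial_v p\|_{L^2(\Omega_\delta)} \|f\|_{L^2(\Omega_\delta)}.
\end{align*}
Since $\|\partial_v p\|_{L^2(\Omega_\delta)}^2 + \|\partial_3 p\|_{L^2(\Omega_\delta)}^2 = \|\nabla p\|_{L^2(\Omega_\delta)}^2$, this already yields $\|\partial_v p\|_{L^2(\Omega_\delta)} \le \|f\|_{L^2(\Omega_\delta)}$ and $\|\partial_3 p\|_{L^2(\Omega_\delta)} \le \|f\|_{L^2(\Omega_\delta)}$ simultaneously, simply by dropping the nonnegative term not being bounded on the left.

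Alternatively, and perhaps cleaner to present, I would expand in a Fourier series in $x_3$ and a Fourier transform in $x_v = (x_1, x_2)$: writing $\widehat{p}(\xi_v, k)$ for the transform of $p$ with $\xi_v \in \mathbb{R}^2$ and $k \in \frac{2\pi}{\delta}\mathbb{Z}$, the equation becomes $-(|\xi_v|^2 + k^2)\widehat{p} = i \xi_v \cdot \widehat{f}$, so that
\begin{align*}
\widehat{\partial_v p} = i\xi_v \widehat{p} = \frac{\xi_v (\xi_v \cdot \widehat f)}{|\xi_v|^2 + k^2}, \qquad \widehat{\partial_3 p} = ik\widehat p = \frac{-ik\, \xi_v \cdot \widehat f}{|\xi_v|^2 + k^2}.
\end{align*}
The multipliers $\frac{\xi_v(\xi_v \cdot \eta)}{|\xi_v|^2 + k^2}$ and $\frac{k\,\xi_v \cdot \eta}{|\xi_v|^2+k^2}$ are bounded in operator norm by $1$ pointwise in $(\xi_v, k)$ — for the first, $|\xi_v||\xi_v|/(|\xi_v|^2 + k^2) \le 1$; for the second, $|k||\xi_v|/(|\xi_v|^2 + k^2) \le \tfrac12 \le 1$ by AM–GM — and Plancherel on $\mathbb{R}^2 \times \mathbb{T}^\delta$ then gives both bounds. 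One should note the nonuniqueness: $p$ is only determined up to an additive function of $x_v$ constant in $x_3$ plus a harmonic function, so the statement should be read as holding for the solution whose derivatives are in $L^2$; the $k=0$ mode contributes only to $\partial_v p$ and still satisfies the first bound, while it does not appear in $\partial_3 p$ at all.

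I do not expect a serious obstacle here; the only points requiring care are (i) the justification of the integration by parts / Fourier manipulations under minimal decay hypotheses on $p$ — handled by approximating with $p$ compactly supported in $x_v$ or by working a priori with Schwartz data in $x_v$ and passing to the limit, exactly the "completion argument" already invoked in the proof of Lemma \ref{KS-A} — and (ii) being explicit that the constant is $1$ and independent of $\delta$, which the Fourier argument makes manifest since the multiplier bounds hold pointwise regardless of the spacing $\frac{2\pi}{\delta}$ of the dual lattice. I would present the energy-identity version as the main line and remark that the Fourier computation gives the same conclusion, then note this lemma will later be applied with $f$ replaced by components of $\nabla Y \cdot \nabla Y$-type quadratic terms to control the pressure gradient.
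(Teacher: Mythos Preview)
Your proposal is correct. Your second (Fourier) argument is essentially the paper's own proof: the paper first rescales to $\Omega_1 = \mathbb{R}^2 \times \mathbb{T}$ via $\tilde p(x_v,x_3)=p(x_v,\delta x_3)$, so that the equation becomes $\Delta_\delta \tilde p = \partial_v \tilde f$ with $\Delta_\delta = \partial_1^2+\partial_2^2+\delta^{-2}\partial_3^2$, and then bounds the multipliers $\frac{\xi_v^2}{\xi_v^2+\delta^{-2}\xi_3^2}$ and $\frac{\xi_v\xi_3}{\xi_v^2+\delta^{-2}\xi_3^2}$ exactly as you do, finally undoing the rescaling. Working directly on $\Omega_\delta$ with $k\in\frac{2\pi}{\delta}\mathbb{Z}$, as you suggest, is the same computation without the change of variables; the $\delta$-independence of the constant is equally transparent either way.

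Your first (energy) argument is a genuinely different and more elementary route: testing $\Delta p=\partial_v f$ against $p$ and integrating by parts gives $\|\nabla p\|_{L^2}^2 \le \|\partial_v p\|_{L^2}\|f\|_{L^2}$ in one line, from which both bounds drop out. This avoids Fourier analysis entirely and makes the constant $1$ and its $\delta$-independence immediate. The paper's rescaled-multiplier approach has the advantage of treating Lemma~\ref{OP1} and Lemma~\ref{OP2} in a uniform way and making explicit the anisotropic scaling philosophy that underlies the whole article; your energy argument is shorter for this particular lemma and would work equally well for Lemma~\ref{OP2} (test against $p$, move $\partial_3$ onto $p$).
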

\begin{proof}
Define
\begin{align*}
\tilde{p}(x_v, x_3)=p(x_v,\delta x_3),\quad
\tilde{f}(x_v, x_3)=f(x_v,\delta x_3) \quad
\mathrm{in}\quad \Omega_1=\mathbb{R}^2\times \mathbb{T}.
\end{align*}
Then
\begin{align*}
\partial_3 \tilde{p}=\delta \partial_3 p.
\end{align*}
Let $\Delta_\delta=\partial_1^2+\partial_2^2 +\delta^{-2} \partial_3^2$.
Then
\begin{align*}
\Delta_\delta \tilde{p}= \partial_v \tilde{f}.
\end{align*}
Note that
\begin{align*}
\|\tilde{p}\|_{L^2(\Omega_1)}= \delta^{-\frac12}  \|p\|_{L^2(\Omega_\delta)},
\quad \|\partial_3\tilde{p}\|_{L^2(\Omega_1)}= \delta^{\frac12}  \|\partial_3p\|_{L^2(\Omega_\delta)}.
\end{align*}
Solving pressure gives
\begin{align*}
\partial_v \tilde{p}= \partial_v \Delta_\delta^{-1} \partial_v \tilde{f},\quad
\partial_3 \tilde{p}= \partial_3 \Delta_\delta^{-1} \partial_v \tilde{f}.
\end{align*}
Note the multipliers are $\frac{\xi_v^2}{\xi_v^2+\delta^{-2}\xi_3^2}$
and $\frac{\xi_v \xi_3}{\xi_v^2+\delta^{-2}\xi_3^2}=
\delta \frac{\xi_v\cdot \delta^{-1}\xi_3}{\xi_v^2+\delta^{-2}\xi_3^2}$, respectively, where $\xi_v=(\xi_1, \xi_2)\in \mathbb{R}^2, \xi_3\in\mathbb{Z}\backslash\{0\} $. Hence
\begin{align*}
\|\partial_v\tilde{p} \|_{L^2(\Omega_1)}\leq \|\tilde{f}\|_{L^2(\Omega_1)},\quad
\|\partial_3\tilde{p} \|_{L^2(\Omega_1)}\leq \delta \|\tilde{f}\|_{L^2(\Omega_1)}.
\end{align*}
This gives that
\begin{align*}
&\| \partial_v p \|_{L^2(\Omega_\delta)}=\delta^{\frac12} \| \partial_v\tilde{p} \|_{L^2(\Omega_1)}
\leq  \delta^{\frac12}  \| \tilde{f} \|_{L^2(\Omega_1)}= \| f \|_{L^2(\Omega_\delta)},\\
\quad
&\| \partial_3p \|_{L^2(\Omega_\delta)}=\delta^{-\frac12} \| \partial_3\tilde{p} \|_{L^2(\Omega_1)}
\leq  \delta^{\frac12}  \| \tilde{f} \|_{L^2(\Omega_1)}= \| f \|_{L^2(\Omega_\delta)}.
\end{align*}
\end{proof}
\begin{lem}\label{OP2}
For $p(x)$ defined on the $\Omega_\delta$ satisfying
\begin{align*}
\ \Delta p=\partial_3 g,
\end{align*}
with periodic boundary condition. Then
there holds that
\begin{align*}
&\|\partial_v p \|_{L^2(\Omega_\delta)} \leq  \| g \|_{L^2(\Omega_\delta)}, \\
&\|\partial_3 p \|_{L^2(\Omega_\delta)} \leq  \| g \|_{L^2(\Omega_\delta)} .
\end{align*}
\end{lem}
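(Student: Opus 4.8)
The plan is to imitate, step by step, the rescaling argument already used for Lemma \ref{OP1}. First I would pass to the fixed domain $\Omega_1=\mathbb{R}^2\times\mathbb{T}$ by setting $\tilde p(x_v,x_3)=p(x_v,\delta x_3)$ and $\tilde g(x_v,x_3)=g(x_v,\delta x_3)$, so that $\partial_3\tilde p=\delta\,\partial_3 p$ (composed with the dilation in $x_3$) and, with $\Delta_\delta=\partial_1^2+\partial_2^2+\delta^{-2}\partial_3^2$, the equation $\Delta p=\partial_3 g$ transforms into
\begin{align*}
\Delta_\delta\tilde p=\delta^{-1}\partial_3\tilde g .
\end{align*}
I would also record the $L^2$ scaling relations, exactly as in the proof of Lemma \ref{OP1}:
\begin{align*}
\|\partial_v p\|_{L^2(\Omega_\delta)}=\delta^{\frac12}\|\partial_v\tilde p\|_{L^2(\Omega_1)},\quad
\|\partial_3 p\|_{L^2(\Omega_\delta)}=\delta^{-\frac12}\|\partial_3\tilde p\|_{L^2(\Omega_1)},\quad
\|g\|_{L^2(\Omega_\delta)}=\delta^{\frac12}\|\tilde g\|_{L^2(\Omega_1)} .
\end{align*}

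Next I would solve for $\tilde p$ on the Fourier side, with $\xi_v=(\xi_1,\xi_2)\in\mathbb{R}^2$ and $\xi_3\in\mathbb{Z}\setminus\{0\}$; the $\xi_3=0$ mode contributes nothing to $\partial_v\tilde p$ or $\partial_3\tilde p$, since the $x_3$-average of $\partial_3 g$ vanishes by periodicity, which is the analogue of the point dispatched in Lemma \ref{OP1}. One then gets
\begin{align*}
\partial_v\tilde p=\delta^{-1}\,\partial_v\Delta_\delta^{-1}\partial_3\tilde g,\qquad
\partial_3\tilde p=\delta^{-1}\,\partial_3\Delta_\delta^{-1}\partial_3\tilde g ,
\end{align*}
with Fourier multipliers
\begin{align*}
\frac{\xi_v\,(\delta^{-1}\xi_3)}{|\xi_v|^2+(\delta^{-1}\xi_3)^2}
\qquad\text{and}\qquad
\delta\,\frac{(\delta^{-1}\xi_3)^2}{|\xi_v|^2+(\delta^{-1}\xi_3)^2}
\end{align*}
respectively. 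The elementary bound $2|ab|\le a^2+b^2$ shows the first multiplier is $\le\frac12$ in modulus and the second is $\le\delta$, so Plancherel on $\Omega_1$ gives $\|\partial_v\tilde p\|_{L^2(\Omega_1)}\le\|\tilde g\|_{L^2(\Omega_1)}$ and $\|\partial_3\tilde p\|_{L^2(\Omega_1)}\le\delta\,\|\tilde g\|_{L^2(\Omega_1)}$.

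Finally I would undo the scaling. Combining the above,
\begin{align*}
\|\partial_v p\|_{L^2(\Omega_\delta)}=\delta^{\frac12}\|\partial_v\tilde p\|_{L^2(\Omega_1)}\le\delta^{\frac12}\|\tilde g\|_{L^2(\Omega_1)}=\|g\|_{L^2(\Omega_\delta)},
\end{align*}
and
\begin{align*}
\|\partial_3 p\|_{L^2(\Omega_\delta)}=\delta^{-\frac12}\|\partial_3\tilde p\|_{L^2(\Omega_1)}\le\delta^{-\frac12}\cdot\delta\,\|\tilde g\|_{L^2(\Omega_1)}=\|g\|_{L^2(\Omega_\delta)},
\end{align*}
which are the two asserted estimates. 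There is no real analytic content beyond the uniform boundedness of these Riesz-type multipliers; the only thing that genuinely needs care — and the reason the bound comes out dimensionless — is the bookkeeping of the powers of $\delta$: one extra $\delta^{-1}$ enters through the rescaled source term, one extra $\delta$ sits in the multiplier for $\partial_3\tilde p$, and these conspire with the factors $\delta^{1/2}$ and $\delta^{-1/2}$ coming from the $L^2$-rescalings to cancel exactly. This is precisely the mechanism that makes Lemma \ref{OP1} work, and in fact Lemma \ref{OP2} could be phrased as a corollary of the multiplier bounds already isolated there.
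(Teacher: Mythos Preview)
Your proof is correct and follows essentially the same argument as the paper: rescale to $\Omega_1$, identify the transformed equation $\Delta_\delta\tilde p=\delta^{-1}\partial_3\tilde g$, bound the Fourier multipliers, and undo the scaling. You even handle the $\xi_3=0$ mode a bit more explicitly than the paper does, but otherwise the steps match one-for-one.
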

\begin{proof}
Define
\begin{align*}
\tilde{p}(x_v, x_3)=p(x_v,\delta x_3),\quad
\tilde{g}(x_v, x_3)=g(x_v,\delta x_3) \quad \mathrm{in}\quad \mathbb{R}^2\times \mathbb{T}.
\end{align*}
Then
\begin{align*}
\partial_3 \tilde{p}=\delta \partial_3 p.
\end{align*}
Let $\Delta_\delta=\partial_1^2+\partial_2^2 +\delta^{-2} \partial_3^2$.
Then
\begin{align*}
\Delta_\delta \tilde{p}= \partial_v \tilde{f},\quad
\Delta_\delta \tilde{p}= \delta^{-1}\partial_3 \tilde{g}.
\end{align*}
Note that
\begin{align*}
\|\tilde{p}\|_{L^2(\Omega_1)}= \delta^{-\frac12}  \|p\|_{L^2(\Omega_\delta)},
\quad \|\partial_3\tilde{p}\|_{L^2(\Omega_1)}= \delta^{\frac12}  \|\partial_3p\|_{L^2(\Omega_\delta)}.
\end{align*}
Solving pressure gives
\begin{align*}
\partial_v \tilde{p}= \delta^{-1} \partial_v \Delta_\delta^{-1} \partial_3 \tilde{g},\quad
\partial_3 \tilde{p}= \delta^{-1} \partial_3 \Delta_\delta^{-1} \partial_3 \tilde{g}.
\end{align*}
Note the multipliers are $\frac{\xi_v \cdot \delta^{-1}\xi_3}{\xi_v^2+\delta^{-2}\xi_3^2}$
and $\frac{\delta^{-1} \xi_3^2}{\xi_v^2+\delta^{-2}\xi_3^2}=
\delta \frac{\delta^{-2} \xi_3^2}{\xi_v^2+\delta^{-2}\xi_3^2}$, respectively, hence
\begin{align*}
\|\partial_v\tilde{p} \|_{L^2(\Omega_1)}\leq \|\tilde{g}\|_{L^2(\Omega_1)},\quad
\|\partial_3\tilde{p} \|_{L^2(\Omega_1)}\leq \delta \|\tilde{g}\|_{L^2(\Omega_1)}.
\end{align*}
This gives that
\begin{align*}
&\| \partial_v p \|_{L^2(\Omega_\delta)}=\delta^{\frac12} \| \partial_v\tilde{p} \|_{L^2(\Omega_1)}
\leq  \delta^{\frac12}  \| \tilde{g} \|_{L^2(\Omega_1)}= \| g \|_{L^2(\Omega_\delta)},\\
\quad
&\| \partial_3p \|_{L^2(\Omega_\delta)}=\delta^{-\frac12} \| \partial_3\tilde{p} \|_{L^2(\Omega_1)}
\leq  \delta^{\frac12}  \| \tilde{g} \|_{L^2(\Omega_1)}= \| g \|_{L^2(\Omega_\delta)}.
\end{align*}
\end{proof}

\section{Estimate of the special quantity}\label{ESQ}
In this section, we give a preliminary estimate for the weighted $L^2$
generalized energy norm $\mathcal{X}_{\kappa}$.
\begin{lem}\label{WE-1}
Let $Y$ be defined on $\Omega_\delta=\mathbb{R}^2\times [0,\delta]$  satisfying periodic boundary condition. Then
there holds
\begin{equation}\label{WW1}
\mathcal{X}_2^{\frac 12}
\lesssim  \delta^{-\frac12}\Big(\|\partial Y \|_{L^2(\Omega_\delta)}+\|\partial Z Y \|_{L^2(\Omega_\delta)}
  + t \|(\partial_t^2 - \Delta)Y\|_{L^2(\Omega_\delta)}\Big).
\end{equation}
\end{lem}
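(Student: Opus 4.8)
The statement bounds the weighted $L^2$ energy $\mathcal{X}_2^{1/2}$, which involves the weights $(t-r)$ near the light cone and $t$ in the region $r > 2t$, in terms of ordinary energy quantities and the wave operator applied to $Y$. The starting point is the classical observation (going back to Klainerman, and used by Sideris and by Lei) that the weight $|t-r|$ can be generated from the generalized derivatives: one has the pointwise identity
\begin{equation}\nonumber
(t - r)(\partial_t^2 - \Delta) = (\partial_t - \partial_r)\widetilde{S} + \text{(first order terms in $\partial$ and $\Omega$)},
\end{equation}
or more precisely an expression of the schematic form $(t-r)\partial^2 Y \lesssim |\partial \widetilde S Y| + |\partial \Omega Y| + |\partial Y| + |t-r||(\partial_t^2-\Delta)Y|$ valid pointwise away from the origin, together with a separate easy bound in the region $r > 2t$ where $t \lesssim |t-r|$ so that the $t^2$-weighted term is also controlled by $(t-r)^2|\partial^2 Y|^2$. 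I would first record this pointwise inequality, citing \cite{Sideris00} or \cite{Lei16} for the two- and three-dimensional versions; the adaptation to the thin domain is purely algebraic since the vector fields $\widetilde\Omega, \widetilde S$ act in the full space-time and the commutation relations are unchanged.

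Next I would integrate the squared pointwise inequality over $\Omega_\delta$. The terms $\|\partial \widetilde S Y\|_{L^2(\Omega_\delta)}$, $\|\partial \widetilde\Omega Y\|_{L^2(\Omega_\delta)}$ and $\|\partial Y\|_{L^2(\Omega_\delta)}$ are all of the form $\|\partial Z Y\|_{L^2(\Omega_\delta)}$ (recalling $Z^a$ ranges over products of $\partial_t, \partial_v, \widetilde\Omega, \widetilde S$) or $\|\partial Y\|_{L^2(\Omega_\delta)}$, while the last term contributes $t\|(\partial_t^2-\Delta)Y\|_{L^2(\Omega_\delta)}$. This already produces the right-hand side of \eqref{WW1} \emph{except} for the $\delta^{-1/2}$ prefactor, which is exactly the anisotropic scaling convention: in the definition of $\mathcal{X}_\kappa$ the summand with $h = 0$ (no $\partial_3$) carries the factor $\delta^{2(0 - 1/2)} = \delta^{-1}$, so taking square roots gives $\delta^{-1/2}$, matching the $\delta^{-1/2}$ attached to every $L^2(\Omega_\delta)$ norm on the right. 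Since $\mathcal{X}_2$ only involves $|\alpha| \le 0$, i.e. $\Gamma^\alpha = \mathrm{Id}$, no $\partial_3$ derivatives enter and the bookkeeping is just this single factor; I would state this explicitly so the reader sees the $\delta$-powers balance.

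The one genuine subtlety — and the step I expect to require the most care — is handling the region near the spatial origin $r = 0$, where the decomposition $\nabla = \frac{y}{r}\partial_r - \frac{y}{r^2}\wedge\Omega$ and the identity for $(t-r)\partial^2$ have apparent singularities in $\frac1r$. The standard fix is to split into $r \le t/2$ (or a fixed ball) and $r \ge t/2$: on the bounded-in-$r$ region one does not use the radial/rotational decomposition at all but instead bounds $(t-r)^2|\partial^2 Y|^2 \lesssim \langle t\rangle^2 |\partial^2 Y|^2$ and absorbs $\langle t\rangle |\partial^2 Y|$ — after one more application of the $\widetilde S$-identity near the cone this is subsumed, or one simply notes $t|\partial \partial_t Y|$, $t|\partial\nabla Y|$ type terms are controlled using $S = t\partial_t + r\partial_r$ with $r$ bounded — while on $r \ge t/2$ the factor $\frac1r \lesssim \frac1t \lesssim \frac1{\langle t\rangle}$ is harmless. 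I would carry out this splitting carefully, making sure every term that appears is either $\|\partial Y\|_{L^2}$, $\|\partial Z Y\|_{L^2}$, or $t\|(\partial_t^2-\Delta)Y\|_{L^2}$ up to the uniform $\delta^{-1/2}$, and then reassemble to obtain \eqref{WW1}. The periodic boundary condition in $x_3$ plays no role here beyond making all integrations by parts (none are actually needed in this lemma) legitimate; it is the later lemmas, not this one, where it matters.
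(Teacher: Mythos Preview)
Your overall strategy --- generate the weight $(t-r)$ from the vector fields $\widetilde S$ and $\widetilde\Omega$, then integrate --- matches the paper's, but there is a genuine gap in the spatial Hessian step, and your last paragraph contains two explicit claims that are wrong.

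The pointwise inequality you assert, $(t-r)|\partial^2 Y|\lesssim |\partial\widetilde SY|+|\partial\widetilde\Omega Y|+|\partial Y|+|t-r|\,|(\partial_t^2-\Delta)Y|$, does \emph{not} hold for the individual spatial second derivatives $\partial_i\partial_j Y$. A pointwise bound on $(t-r)\partial_i\partial_jY$ would require the Lorentz boosts $L_i=t\partial_i+x_i\partial_t$, which are precisely the vector fields excluded here (the paper says this is why $\mathcal X_\kappa$ is introduced in the first place). What the paper actually proves pointwise is only the bound on $(t-r)\Delta Y$, via the algebraic identity
\[
(t^2-r^2)\Delta Y=-t^2(\partial_t^2-\Delta)Y-r^2(\Delta Y-\partial_r^2Y)+(t\partial_t-r\partial_r)(S-1)Y,
\]
which is nonsingular at $r=0$. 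To upgrade $\|(t-r)\Delta Y\|_{L^2}$ to $\|(t-r)\partial_i\partial_jY\|_{L^2}$ the paper performs a genuine integration by parts on $\Omega_\delta$, moving derivatives between the two copies of $\partial_i\partial_jY$ to produce $(t-r)^2|\Delta Y|^2$ plus lower-order terms; the boundary terms in the $x_3$-direction vanish \emph{because of} the periodic boundary condition. Hence both of your closing assertions --- that no integration by parts is needed in this lemma and that periodicity plays no role --- are incorrect. Your proposed fix near $r=0$ (``use $S=t\partial_t+r\partial_r$ with $r$ bounded'') only recovers $t\partial_t$, not $t\partial_i$, so it cannot control $t|\nabla^2 Y|$ there; the integration-by-parts argument is not a convenience but the mechanism that makes the lemma work.
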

\begin{rem}
On the right hand side of \eqref{WW1},
we use the generalized derivative $Z$
involving no $\partial_3$ derivative. This observation plays an important role in closing the energy estimate.
\end{rem}

\begin{proof}
First of all, using the decomposition for gradient operator \eqref{grad-decomp}
and the expression of Laplacian in spherical coordinate
\begin{equation}\nonumber
\Delta = \partial_r^2 + \frac{2}{r}\partial_r +
\frac{1}{r^2}\Delta_{\mathbb{S}^2}.
\end{equation}
One derives that
\begin{eqnarray}\label{C-5}
|\Delta Y - \partial_{r}^2Y|
\leq \frac{2|\partial_rY|}{r} +
\frac{|\Delta_{\mathbb{S}^2} Y|}{r^2}
\lesssim \frac{|\nabla Y| +
|\nabla \Omega Y|}{r}.
\end{eqnarray}
Let us further write
\begin{eqnarray}\nonumber
(t^2 - r^2)\Delta Y
&=& - t^2(\partial_t^2 - \Delta) Y - r^2(\Delta
  Y - \partial_r^2Y) + t^2\partial_t^2Y - r^2\partial_r^2Y\\\nonumber
&=& - t^2(\partial_t^2 - \Delta) Y - r^2(\Delta
  Y - \partial_r^2Y)\\\nonumber
&&+\ (t\partial_t - r\partial_r)(t\partial_t +
  r\partial_r - 1)Y.
\end{eqnarray}
Hence, using \eqref{C-5}, one has
\begin{eqnarray}\label{C-6}
|(t - r)\Delta Y| &\lesssim& t|(\partial_t^2 - \Delta) Y| + r|\Delta
  Y - \partial_r^2Y| + |\partial SY| + |\partial Y|\\\nonumber
&\lesssim& |\partial Z Y| + |\partial Y| + t|(\partial_t^2 - \Delta) Y|.
\end{eqnarray}

Next, using \eqref{C-6}, integration by parts and the periodic boundary condition, one immediately
has
\begin{eqnarray}\nonumber
&&\|(t - r)\partial_i\partial_j Y\|_{L^2(\Omega_{\delta})}^2
= \int_{\Omega_\delta}  (t - r)^2\partial_i \partial_j Y\partial_i\partial_j Ydy\\\nonumber
&&= 2\int_{\Omega_\delta}(t - r)\omega_i\partial_j
  Y\partial_i\partial_j Ydy - \int_{\Omega_\delta}(t - r)^2\partial_j
  Y\Delta\partial_j Ydy\\\nonumber
&&= 2\int_{\Omega_\delta}(t - r)\omega_i\partial_j
  Y\partial_i\partial_j Ydy - 2\int_{\Omega_\delta}(t - r)\omega_j\partial_j
  Y\Delta Ydy\\\nonumber
&&\quad +\ \int_{\Omega_\delta}(t - r)^2\Delta Y\Delta
  Ydy\\\nonumber
&&\leq 8\|\nabla Y\|_{L^2(\Omega_\delta)}^2  + \frac{1}{2}
  \|(t - r)\partial_i\partial_jY\|_{L^2(\Omega_\delta)}^2
  + \int_{\Omega_\delta}(t - r)^2\Delta Y\Delta Ydy.
\end{eqnarray}
Using \eqref{C-6}, one obtains that
\begin{eqnarray}\label{C-7}
\|(t - r)\partial_i\partial_j Y\|_{L^2(\Omega_\delta)} \lesssim
\|\partial Y\|_{L^2(\Omega_\delta)} +\|\partial ZY\|_{L^2(\Omega_\delta)}+ t \|(\partial_t^2 -
\Delta)Y\|_{L^2(\Omega_\delta)}.
\end{eqnarray}

To estimate $|(t - r)\partial_{t}\nabla Y|$, let us first write
\begin{eqnarray}\nonumber
(t - r)\partial_t\partial_rY &=& - (\partial_t -
  \partial_r)(t\partial_t + r\partial_r - 1)Y
  + t\partial_t^2Y - r\partial_r^2Y\\\nonumber
&=& - (\partial_t - \partial_r)(t\partial_t +
  r\partial_r - 1)Y + t(\partial_t^2 - \Delta)Y\\\nonumber
&&+\ (t - r)\Delta Y + r(\Delta Y - \partial_r^2Y).
\end{eqnarray}
Then using \eqref{C-5} and \eqref{C-6}, one has
\begin{eqnarray}\label{C-10}
|(t - r)\partial_t\partial_rY| \lesssim |\partial Y|+|\partial Z Y| +
t|(\partial_t^2 - \Delta) Y|.
\end{eqnarray}
Consequently, we have
\begin{eqnarray}\nonumber
|(t - r)\partial_t\partial_j Y| &\leq & \big|(t - r)
  \partial_t\omega_j\partial_r Y| + |(t -
  r) r^{-1} \partial_t \Omega Y\big|\\\nonumber
&\leq& |(t - r)\omega_j\partial_t\partial_r Y| + \big|r^{-1}
  (t\partial_t + r\partial_r)\Omega
  Y - \Omega(\partial_t +
  \partial_r) Y\big|\\\nonumber
&\lesssim& |(t - r)\partial_t\partial_r Y| +
  r^{-1}|\Omega SY| + |\Omega
  \partial_tY| + |\Omega
  (\omega\cdot\nabla Y)|\\\nonumber
&\lesssim& |(t - r) \partial_t\partial_r Y| +
  r^{-1}|\Omega SY| +
  |\Omega \partial_tY|
 +\ | \nabla Y| + |
  \Omega \nabla Y|\\\nonumber
&\lesssim& |\partial Y| +|\partial ZY| + t|(\partial_t^2 - \Delta) Y|,
\end{eqnarray}
which gives
\begin{eqnarray}\label{C-8}
\|(t - r)\partial_t\partial_j Y\|_{L^2(\Omega_\delta)} \lesssim
\|\partial Y\|_{L^2(\Omega_\delta)} +\|\partial ZY\|_{L^2(\Omega_\delta)} + t \|(\partial_t^2 -
\Delta)Y\|_{L^2(\Omega_\delta)}.
\end{eqnarray}

At last, if $r \leq 2 t$, we deduce using \eqref{C-6} that
\begin{eqnarray}\nonumber
|(t - r)\partial_t^2 Y| &\leq& \big|(t - r)
  (\partial_t^2 - \Delta)Y\big| + |(t -
  r) \Delta Y|\\\nonumber
&\lesssim& |(t - r)
  (\partial_t^2 - \Delta)Y| + |\partial ZY| + |\partial Y|
  + t|(\partial_t^2 - \Delta) Y|\\\nonumber
&\lesssim& |\partial Z Y| + |\partial Y|
  +  |t(\partial_t^2 - \Delta) Y|,
\end{eqnarray}
and if $r > 2 t$, we deduce that
\begin{eqnarray}\nonumber
t |\partial_t^2 Y| \leq t |(\partial_t^2 - \Delta)Y\big| + t|\Delta Y| .
\end{eqnarray}
Hence, by \eqref{C-7}, we have
\begin{eqnarray}\label{C-9}
&&\int_{r \leq 2 t}( t - r)^2
|\partial_t^2  Y|^2dy + \int_{r > 2t}t^2|\partial_t^2
  Y|^2dy\\\nonumber
&&\lesssim t^2\|(\partial_t^2 - \Delta)Y\|_{L^2(\Omega_\delta)}^2 +
  \| |\partial Y\|_{L^2(\Omega_\delta)} +\| |\partial ZY\|_{L^2(\Omega_\delta)} + \int_{r > 2t}
  t^2|\Delta Y|^2dy\\\nonumber
&&\lesssim t^2\|(\partial_t^2 - \Delta)Y\|_{L^2(\Omega_\delta)}^2 +
  \|\partial Y\|_{L^2(\Omega_\delta)} +\|\partial ZY\|_{L^2(\Omega_\delta)}.
\end{eqnarray}
Then the lemma follows from \eqref{C-7}, \eqref{C-8} and
\eqref{C-9}.
\end{proof}
At the end of this section, let us estimate good derivatives
$\omega_j\partial_t + \partial_j$.
\begin{lem}\label{GoodDeri}
For $t\leq 3r $, there holds
\begin{equation}\label{WW2}
t |\omega_j\partial_t\partial Y + \partial_j\partial Y| \lesssim
|\partial Y| + |\partial  Z Y| + t|(\partial_t^2 - \Delta) Y|.
\end{equation}
\end{lem}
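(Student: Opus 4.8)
The plan is to rewrite the good derivative $\omega_j\partial_t+\partial_j$ in terms of the scaling field $S$, a weighted radial second derivative, and angular derivatives, and then to control each piece. Using the decomposition \eqref{grad-decomp}, with $\omega=y/r$, one has the algebraic identity $\omega_j\partial_t+\partial_j=\omega_j(\partial_t+\partial_r)-r^{-1}(\omega\wedge\Omega)_j$, while $S=t\partial_t+r\partial_r$ gives the exact identity $t(\partial_t+\partial_r)=S+(t-r)\partial_r$. Applying this to each component $\partial_\mu Y$ of $\partial Y$ yields
\[
t(\omega_j\partial_t+\partial_j)\partial_\mu Y=\omega_j\,S\partial_\mu Y+\omega_j(t-r)\partial_r\partial_\mu Y-\frac{t}{r}(\omega\wedge\Omega)_j\partial_\mu Y .
\]
The first term is handled by the commutator $[S,\partial_\mu]=-\partial_\mu$, which gives $S\partial_\mu Y=\partial_\mu\widetilde S Y$ and hence a bound by $|\partial Z Y|$. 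The third term is where the hypothesis $t\le 3r$ first enters: it forces $t/r\le 3$, so the term is $\lesssim|\Omega\partial_\mu Y|$, and after commuting $\Omega$ through $\partial_\mu$ and writing $\Omega Y=\widetilde\Omega Y-UY$ this is $\lesssim|\partial Z Y|+|\partial Y|$.

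The substantive term is $(t-r)\partial_r\partial_\mu Y=(t-r)\omega_k\partial_k\partial_\mu Y$, i.e. a pointwise bound on the weighted Hessian $(t-r)\partial_k\partial_\mu Y$. For $\mu=t$ this is already available: it is precisely the pointwise estimate for $(t-r)\partial_t\partial_j Y$ established on the way to \eqref{C-8}, so $|(t-r)\partial_k\partial_t Y|\lesssim|\partial Y|+|\partial Z Y|+t|(\partial_t^2-\Delta)Y|$. For $\mu$ spatial one needs the same bound for $(t-r)\partial_k\partial_l Y$, and this is the main obstacle, because in Lemma~\ref{WE-1} the spatial Hessian is only controlled in $L^2$, via integration by parts, rather than pointwise. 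I would instead proceed pointwise, exploiting $t\le 3r$ once more. Applying \eqref{grad-decomp} twice, together with $\partial_r\omega=0$, $[\partial_r,\Omega]=0$ and $[\partial_r,\partial_k]=O(r^{-1})\nabla$, gives the decomposition $|\partial_k\partial_l Y|\lesssim|\partial_r^2 Y|+r^{-1}(|\nabla Y|+|\nabla\Omega Y|)$. Then a pointwise bound on $(t-r)\partial_r^2 Y$ follows by solving the identity $t^2\partial_t^2 Y-r^2\partial_r^2 Y=(t\partial_t-r\partial_r)\widetilde S Y$ (already used in the proof of Lemma~\ref{WE-1}) for $\partial_r^2 Y$, dividing by $r+t$, and invoking \eqref{C-5}: on $\{t\le 3r\}$ the resulting weights $\tfrac{t^2}{r+t}\le t$ and $\tfrac{t^2}{r(r+t)}\le 9$ are harmless, so $|(t-r)\partial_r^2 Y|\lesssim|\partial Y|+|\partial Z Y|+t|(\partial_t^2-\Delta)Y|$. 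Combining the Hessian decomposition with this bound and with $\tfrac{|t-r|}{r}\le 2$ (again from $t\le 3r$) gives the required pointwise estimate for $(t-r)\partial_k\partial_l Y$.

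Collecting the three contributions gives $t|\omega_j\partial_t\partial Y+\partial_j\partial Y|\lesssim|\partial Y|+|\partial Z Y|+t|(\partial_t^2-\Delta)Y|$, which is the claim. The whole argument is bookkeeping once one has the commutator identity $S\partial=\partial\widetilde S$, the pointwise weighted Hessian estimate, and the observation that the constraint $t\le 3r$ converts every potentially dangerous weight $\tfrac tr,\ \tfrac{t-r}{r},\ \tfrac{t^2}{r^2}$ into an $O(1)$ factor; upgrading the spatial part of the Hessian bound from the $L^2$ statement \eqref{C-7} to a pointwise one is the only genuinely new step compared with Lemma~\ref{WE-1}, and the part I expect to require the most care.
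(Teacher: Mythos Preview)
Your argument is correct and uses the same ingredients as the paper (the decomposition \eqref{grad-decomp}, the identity $t(\partial_t+\partial_r)=S+(t-r)\partial_r$, the commutation $S\partial=\partial\widetilde S$, and the pointwise bounds \eqref{C-5}, \eqref{C-6}, \eqref{C-10}), so the two proofs are essentially equivalent. The only organizational difference is that the paper first controls $t(\partial_t+\partial_r)\partial_r Y$ by writing $2\partial_r=(\partial_t+\partial_r)-(\partial_t-\partial_r)$ and invoking the factorization $(\partial_t+\partial_r)(\partial_t-\partial_r)=\partial_t^2-\Delta+(\Delta-\partial_r^2)$ (your step is bypassed: the paper never isolates the pointwise spatial Hessian bound $(t-r)\partial_k\partial_l Y$ as a separate statement, although it implicitly obtains $(t-r)\partial_r^2 Y$ via $(t-r)\Delta Y-(t-r)(\Delta-\partial_r^2)Y$), whereas you apply $t(\partial_t+\partial_r)=S+(t-r)\partial_r$ directly to $\partial_\mu Y$ and then control the weighted radial Hessian. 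Your route is slightly more direct; the paper's route makes the role of the wave operator a bit more explicit through \eqref{G-1}. Either way the same pointwise estimate drops out.
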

\begin{rem}
Similar to the above  Lemma \ref{WE-1}.
On the right hand side of \eqref{WW2},
we use the generalized derivative $Z$
involving no $\partial_3$ derivative.
\end{rem}

\begin{proof}
Let us first calculate that
\begin{eqnarray}\nonumber
&&t(\partial_t + \partial_r)(\partial_t - \partial_r)Y\\\nonumber
&&= t(\partial_t^2 - \Delta)Y + t(\Delta - \partial_r^2)Y\\\nonumber
&&= t(\partial_t^2 - \Delta)Y + \frac{t}{r}\big(2\partial_rY +
  \frac{\Delta_{\mathbb{S}^2} Y}{r}\big).
\end{eqnarray}
Consequently, we have
\begin{eqnarray}\label{G-1}
&&t|(\partial_t + \partial_r)(\partial_t -
\partial_r)Y|\\\nonumber
&&\lesssim t|(\partial_t^2 - \Delta)Y| + \frac{t}{r}\big(|\nabla Y| +
   \frac{|(x_i\partial_j - x_j\partial_i) \Omega
   Y|}{r}\big)\\\nonumber
&&\lesssim t|(\partial_t^2 - \Delta)Y| + \frac{t}{r}\big(|\nabla Y|
  + |\nabla\Omega Y|\big).
\end{eqnarray}
Next, using \eqref{C-5}, \eqref{C-6}, \eqref{C-10} and \eqref{G-1},
we calculate that
\begin{eqnarray}\nonumber
&&t|(\partial_t + \partial_r)\partial_rY|\\\nonumber &&\leq
\frac{t}{2}|(\partial_t +
\partial_r)(\partial_t -
  \partial_r)Y| + \frac{t}{2}|(\partial_t + \partial_r)(\partial_t +
  \partial_r)Y|\\\nonumber
&&\lesssim \frac{t}{2}|(\partial_t + \partial_r)(\partial_t -
  \partial_r)Y| + \frac{1}{2}|S(\partial_t +
  \partial_r)Y| + \frac{1}{2}|(t - r)\partial_r(\partial_t +
  \partial_r)Y|\\\nonumber
&&\lesssim \frac{t}{2}|(\partial_t + \partial_r)(\partial_t -
  \partial_r)Y| + \frac{1}{2}|S(\partial_t +
  \partial_r)Y|\\\nonumber
&&\quad +\ \frac{1}{2}|(t - r)\partial_{tr}^2Y| + \frac{1}{2}|(t -
  r)(\Delta - \partial_r^2)Y| + \frac{1}{2}|(t - r)\Delta
  Y|\\\nonumber
&&\lesssim t|(\partial_t^2 - \Delta)Y| + |\partial Y| + |\partial Z Y|.
\end{eqnarray}
Hence, we have
\begin{eqnarray}\label{G-2}
&&t|(\omega_j\partial_t + \partial_j)\partial_kY|\\\nonumber
&&= \big|t\omega_j(\partial_t + \partial_r)\partial_kY +
  \frac{t}{r} \Big(\frac{x}{r}\wedge\Omega\Big)_j
  \partial_kY\big|\\\nonumber
&&\lesssim |t(\partial_t + \partial_r)\partial_rY| +
  \frac{t}{r}\big(|\Omega \partial_kY| + |\partial_r\Omega Y|\big)\\\nonumber
&&\lesssim t|(\partial_t^2 - \Delta)Y| + |\partial Y| + |\partial Z Y|.
\end{eqnarray}

At last, let us calculate that
\begin{eqnarray}\nonumber
t|(\partial_t + \partial_r)\partial_tY| &\leq& t|(\partial_t +
  \partial_r)(\partial_t - \partial_r)Y| + t|(\partial_t +
  \partial_r)\partial_rY|\nonumber
\end{eqnarray}
which together with \eqref{G-1} and \eqref{G-2} gives that
\begin{eqnarray}\label{G-3}
&&t|(\omega_j\partial_t + \partial_j)\partial_tY|\\\nonumber
&&\lesssim t|(\partial_t^2 - \Delta)Y| + |\partial Y| + |\partial Z Y|.
\end{eqnarray}
Then the lemma follows from \eqref{G-2} and \eqref{G-3}.
\end{proof}

\section{Estimate of the $L^2$ weighted norm}\label{WE}

Now we are going to estimate the $L^2$ weighted generalized energy
$\mathcal{X}_{\kappa}$. First of all, we prove the following lemma
which says that the rescaled $L^2$ norm of $\nabla\Gamma^\alpha p$ and
$(\partial_t^2 - \Delta)\Gamma^\alpha Y$, which involve the
$(|\alpha| + 2)$-th order derivatives of unknowns, can be bounded by
certain quantities which only involve $(|\alpha| + 1)$-th order
derivatives of unknowns. This surprising result is
based on the inherent special structures of nonlinearities in the system.
We refer to \cite{Lei16} of Lemma 4.1 for the two dimensional isotropic version.

\begin{lem}\label{SN-1}
Suppose $\|\nabla Y\|_{L^\infty(\Omega_\delta)} \ll 1 $.  There holds
\begin{equation}\nonumber
\delta^{h-\frac12}\|\nabla\Gamma^\alpha p\|_{L^2(\Omega_\delta)} +
\delta^{h-\frac12} \|(\partial_t^2 -\Delta)\Gamma^\alpha Y\|_{L^2(\Omega_\delta)} \lesssim \Pi(\alpha, 2)
\end{equation}
where
\begin{eqnarray*} 
\Pi(\alpha, 2) =&& \sum_{\beta + \gamma = \alpha,\ \beta \neq \alpha} \Pi_1
+\sum_{\beta + \gamma = \alpha,\ |\beta| \leq|\gamma|}\Pi_2
  +\sum_{\beta + \gamma = \alpha,\ |\beta| > |\gamma|}\Pi_3 \\\nonumber
&& +\sum_{\beta+\gamma+\iota=\alpha,\ \beta\neq \alpha} \Pi_4
+\sum_{\beta+\gamma+\iota=\alpha,\ |\beta|\geq |\alpha|/3}  \Pi_5,
\end{eqnarray*}
where $\Pi_1$, $\Pi_2$ and $\Pi_3$ are given by
\begin{align*} 
&\Pi_1=\delta^{h-\frac12} \big\|
  |(\partial_t^2- \Delta)\Gamma^\beta Y| |\nabla\Gamma^\gamma Y|\big\|_{L^2(\Omega_\delta)},\\[-4mm]\nonumber\\\nonumber
&\Pi_2 =\delta^{h-\frac12}\|\nabla(- \Delta)^{-1}(-\partial_j,\partial_i)  \cdot
\big((\partial_i, \partial_j)
\partial_t \Gamma^\beta Y^i \partial_t\Gamma^\gamma Y^j
- (\partial_i, \partial_j) \partial_k\Gamma^\beta Y^i \partial_k \Gamma^\gamma Y^j \big)\|_{L^2(\Omega_\delta)},
\\[-4mm]\nonumber\\\nonumber
& \Pi_3 =\delta^{h-\frac12} \|\nabla(- \Delta)^{-1}(\partial_i, \partial_j) \cdot
\big( \partial_t \Gamma^\beta Y^i  (-\partial_j,\partial_i)\partial_t\Gamma^\gamma Y^j
-\partial_k\Gamma^\beta Y^i (-\partial_j,\partial_i)\partial_k \Gamma^\gamma Y^j\big)\|_{L^2(\Omega_\delta)},
\end{align*}
and $\Pi_4$, $\Pi_5$ are given by
\begin{align*} 
&\Pi_4=\delta^{h-\frac12}\big\|
|(\partial_t^2 - \Delta) \Gamma^\beta Y|\cdot |\nabla\Gamma ^\gamma Y|\cdot |\nabla\Gamma ^\iota Y|
\big\|_{L^2(\Omega_\delta)},\\[-4mm]\nonumber\\\nonumber
&\Pi_5 =\delta^{h-\frac12}
\big\| \partial\Gamma^\beta Y\big\|_{L^2(\Omega_\delta)}
\big\||\partial\nabla\Gamma ^\gamma Y\nabla\Gamma^\iota Y|
+|\nabla\Gamma ^\gamma Y\partial\nabla\Gamma^\iota Y|
\big\|_{L^\infty(\Omega_\delta)}.
\end{align*}
\end{lem}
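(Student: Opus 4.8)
The idea is to take the evolution equation \eqref{Elasticity-D} for $\Gamma^\alpha Y$ and decompose $(\partial_t^2-\Delta)\Gamma^\alpha Y$ and $\nabla\Gamma^\alpha p$ into curl-free and divergence-free parts, then estimate each piece separately. First I would apply the Leray projection. Taking the divergence of \eqref{Elasticity-D} and using \eqref{Struc-2}, the pressure $\Gamma^\alpha p$ solves an elliptic equation whose right-hand side is, schematically, a sum of terms $\partial^2 (\nabla\Gamma^\beta Y \cdot \nabla\Gamma^\gamma Y)$ coming from the quadratic structure, plus $\partial^2(\nabla\Gamma^\beta Y\nabla\Gamma^\gamma Y\nabla\Gamma^\iota Y)$ coming from the cubic determinant term, plus a ``commutator'' term $\mathrm{div}\big((\nabla\Gamma^\beta Y)^\top (\partial_t^2-\Delta)\Gamma^\gamma Y\big)$ with $\beta\neq\alpha$. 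The key point—this is the ``inherent cancellation structure'' referred to in the introduction—is that in the divergence constraint \eqref{Struc-2} the quadratic term is a null form $\partial_i\Gamma^\beta Y^i\partial_j\Gamma^\gamma Y^j-\partial_i\Gamma^\gamma Y^j\partial_j\Gamma^\beta Y^i$, which can be written as $\partial_j(\cdots)-\partial_i(\cdots)$ so that, after differentiating the elliptic equation once more in time (or in $\partial_k$) as needed and using the wave equation to trade $\partial_t^2$ for $\Delta$ on the low-order factor, one derivative can be moved off of $(-\Delta)^{-1}$'s argument. This is exactly what produces the specific forms of $\Pi_2$ and $\Pi_3$, where the outer $\nabla(-\Delta)^{-1}(-\partial_j,\partial_i)$ (resp.\ $\nabla(-\Delta)^{-1}(\partial_i,\partial_j)$) is an order-zero Calder\'on–Zygmund operator and only first derivatives of $Y$ appear inside.

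Concretely: for the curl-free part of $(\partial_t^2-\Delta)\Gamma^\alpha Y$, which equals $-\nabla\Gamma^\alpha p$ plus the curl-free projection of the quadratic/cubic right-hand side, I apply the elliptic estimates of Lemma \ref{OP1} and Lemma \ref{OP2} in the thin domain to convert $\nabla(-\Delta)^{-1}\partial^2$ of a product into an $L^2$ bound on the product itself, being careful to track the $\delta$-powers: each factor carrying a $\partial_3$ contributes a $\delta$, taking an $L^2$ norm in $x_3$ costs $\delta^{-1/2}$, and the weight $\delta^{h-\frac12}$ in $\Pi(\alpha,2)$ is precisely chosen so that these balance (this is the ``dimensionless'' bookkeeping). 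The terms where $\beta\neq\alpha$ on the quadratic part, and where both quadratic factors are genuinely differentiated, get distributed by the standard Klainerman device: put the lower-order factor (at most $|\alpha|/2$, or $|\alpha|/3$ in the cubic case) in $L^\infty$ via the anisotropic Sobolev inequalities of Lemma \ref{Sob2}, and the higher-order factor in $L^2$; this yields $\Pi_1$, $\Pi_4$, $\Pi_5$. For the divergence-free part of $(\partial_t^2-\Delta)\Gamma^\alpha Y$, one reads it directly off \eqref{Struc-2}: it is $\nabla\times$(something)/$(-\Delta)$ applied to the same quadratic null form and cubic term, so it is controlled by the same family of $\Pi$'s (in fact it contributes to $\Pi_2,\Pi_3,\Pi_5$ and to the $\beta\neq\alpha$ sums).

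The main obstacle is the commutator term $(\nabla\Gamma^\beta Y)^\top(\partial_t^2-\Delta)\Gamma^\gamma Y$ with $\gamma<\alpha$: here $(\partial_t^2-\Delta)\Gamma^\gamma Y$ is itself only controlled recursively, so strictly speaking one should phrase the lemma as an induction on $|\alpha|$, using the bound for lower $|\gamma|$ inside. But because $\beta\neq\alpha$ forces $|\gamma|<|\alpha|$ and we are allowed the smallness $\|\nabla Y\|_{L^\infty}\ll1$, the factor $\nabla\Gamma^\beta Y$ with $|\beta|$ small can be absorbed in $L^\infty$ (via Lemma \ref{Sob2}) and the contribution is $\lesssim \|\nabla Y\|_{L^\infty}\cdot\delta^{h-\frac12}\|(\partial_t^2-\Delta)\Gamma^\gamma Y\|_{L^2}$, which either is already of the form $\Pi_1$ or can be hidden in the left-hand side after summing over $\alpha$. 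The other delicate point is making sure that the elliptic multiplier estimates of Lemmas \ref{OP1}–\ref{OP2} are applied in the right orientation—i.e.\ choosing whether to extract $\partial_v$ or $\partial_3$ from the source term so that no spurious $\delta^{-1}$ survives—which is the whole reason those two lemmas are stated separately. Once the curl-free and divergence-free parts are both bounded by $\Pi(\alpha,2)$, and $\nabla\Gamma^\alpha p$ is bounded by the same (it is essentially the curl-free part up to the quadratic remainder), adding the two estimates completes the proof.
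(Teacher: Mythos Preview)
Your overall strategy---solve for $\nabla\Gamma^\alpha p$ via the elliptic equation, use the null structure of the constraint \eqref{Struc-2}, apply the Riesz-type bounds of Lemmas \ref{OP1}--\ref{OP2}, and absorb the top-order piece using $\|\nabla Y\|_{L^\infty}\ll 1$---matches the paper. But your description of the key step that produces $\Pi_2$ and $\Pi_3$ is off, and the Helmholtz/Leray framing leads you astray.

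The paper does \emph{not} decompose $(\partial_t^2-\Delta)\Gamma^\alpha Y$ into curl-free and divergence-free parts, and the divergence-free part is certainly not ``read directly off \eqref{Struc-2}'' (that identity gives $\nabla\cdot\Gamma^\alpha Y$, not a curl). Instead, the paper writes
\[
\nabla\Gamma^\alpha p=\nabla(-\Delta)^{-1}\nabla\cdot\Big[\textstyle\sum_{\beta+\gamma=\alpha}C_\alpha^\beta(\nabla\Gamma^\beta Y)^\top\Box\Gamma^\gamma Y\Big]
+\nabla(-\Delta)^{-1}\nabla\cdot\Box\Gamma^\alpha Y,
\]
bounds the first bracket by Riesz boundedness (this gives $\Pi_1$-type terms plus one absorbable term), and then computes the second piece by commuting $\nabla\cdot$ with $\Box$ and applying $\Box$ to the constraint \eqref{Struc-2}. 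The mechanism is the Leibniz identity
\[
\Box(fg)=(\Box f)g+f(\Box g)+2\big(\partial_t f\,\partial_t g-\partial_k f\,\partial_k g\big),
\]
applied to the quadratic null form $(\partial_i,\partial_j)\Gamma^\beta Y^i\cdot(-\partial_j,\partial_i)\Gamma^\gamma Y^j$ and to the determinant. The $(\Box f)g$ and $f(\Box g)$ pieces give $\Pi_1$ and $\Pi_4$; the \emph{cross term} $\partial_t f\,\partial_t g-\partial_k f\,\partial_k g$ is the source of $\Pi_2,\Pi_3,\Pi_5$. Because one of the outer factors $(\partial_i,\partial_j)$ or $(-\partial_j,\partial_i)$ can be pulled outside as a divergence, $\nabla(-\Delta)^{-1}$ acting on it remains an order-zero operator, and you choose which side to pull the derivative from according to whether $|\beta|\le|\gamma|$ or $|\beta|>|\gamma|$---this is exactly the split into $\Pi_2$ versus $\Pi_3$. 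There is no ``differentiating the elliptic equation once more in time'' and no ``using the wave equation to trade $\partial_t^2$ for $\Delta$''; following that description literally would not yield the stated $\Pi_2,\Pi_3$. Once $\nabla\Gamma^\alpha p$ is bounded, $\Box\Gamma^\alpha Y$ is estimated directly from \eqref{Elasticity-D}, and the $\gamma=\alpha$ term is absorbed via $\|\nabla Y\|_{L^\infty}\ll 1$, as you say.
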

\begin{proof}
We begin by writing \eqref{Elasticity-D} as follows
\begin{eqnarray}\nonumber
- \nabla\Gamma^\alpha p = (\partial_t^2 - \Delta)
\Gamma^\alpha Y + \sum_{\beta + \gamma = \alpha}C_\alpha^\beta (\nabla \Gamma^\beta Y)^{\top}(\partial_t^2 -
\Delta)\Gamma^\gamma Y.
\end{eqnarray}
Applying the divergence operator to the above equation and then
applying the operator $\nabla(- \Delta)^{-1}$, we obtain that
\begin{eqnarray*}
\nabla\Gamma^\alpha p &=& \sum_{\beta + \gamma = \alpha,\
  \gamma \neq \alpha}\nabla(- \Delta)^{-1}\nabla\cdot[C_\alpha^\beta(\nabla\Gamma^\beta Y)^\top(\partial_t^2
   - \Delta)\Gamma^\gamma Y]\\\nonumber
&&+\ \nabla(- \Delta)^{-1}\nabla\cdot[(\nabla Y)^\top(\partial_t^2 -
  \Delta)\Gamma^\alpha Y]\\[-4mm]\nonumber\\\nonumber
&&+\ \nabla(- \Delta)^{-1}\nabla\cdot(\partial_t^2 -
  \Delta)\Gamma^\alpha Y.
\end{eqnarray*}
Using the $L^2$ boundedness of the Riesz transform in Lemma \ref{OP1} and Lemma \ref{OP2},
one has
\begin{eqnarray}\label{D-1}
&&\delta^{h-\frac12}\|\nabla\Gamma^\alpha p\|_{L^2(\Omega_\delta)}\\\nonumber
&\lesssim& \delta^{h-\frac12}\sum_{\beta + \gamma = \alpha,\
  \gamma \neq \alpha}\|(\nabla\Gamma^\beta Y)^\top(\partial_t^2
   - \Delta)\Gamma^\gamma Y)\|_{L^2(\Omega_\delta)}\\\nonumber
&&+\ \delta^{h-\frac12}\|(\nabla Y)^\top(\partial_t^2 - \Delta)\Gamma^\alpha Y\|_{L^2(\Omega_\delta)}
  + \delta^{h-\frac12}\|\nabla(- \Delta)^{-1}\nabla\cdot(\partial_t^2
  - \Delta)\Gamma^\alpha Y\|_{L^2(\Omega_\delta)}.
\end{eqnarray}
As we will see later, the first two terms on the right hand side of \eqref{D-1} will be
treated directly. In what follows, we focus our mind on the calculation of
\begin{eqnarray}\label{D-2}
\delta^{h-\frac12} \|\nabla(- \Delta)^{-1}\nabla\cdot(\partial_t^2
  - \Delta)\Gamma^\alpha Y\|_{L^2(\Omega_\delta)}.
\end{eqnarray}
Using \eqref{Struc-2}, we compute that
\begin{align*} 
&\nabla\cdot (\partial_t^2
  - \Delta)\Gamma^\alpha Y \\\nonumber
&= -\frac12 (\partial_t^2 - \Delta)\sum_{\beta + \gamma = \alpha}
  C_\alpha^\beta\big(\partial_i \Gamma^\beta  Y^i \partial_j\Gamma^\gamma  Y^j
  - \partial_i\Gamma^\gamma  Y^j \partial_j\Gamma^\beta  Y^i\big)\\\nonumber
&\quad -(\partial_t^2 - \Delta)
\sum_{\beta+\gamma+\iota=\alpha}C_\alpha  ^\beta C_{\alpha-\beta}  ^\gamma
\left|
\begin{matrix}
 \partial_1\Gamma^\beta Y^1 &\partial_2 \Gamma^\beta Y^1 & \partial_3 \Gamma^\beta Y^1\\
 \partial_1\Gamma ^\gamma Y^2& \partial_2 \Gamma ^\gamma Y^2& \partial_3\Gamma ^\gamma Y^2\\
\partial_1\Gamma ^\iota Y^3&\partial_2\Gamma^\iota Y^3&\partial_3\Gamma^\iota Y^3
\end{matrix}
\right|\\\nonumber
&=J_1+J_2.
\end{align*}
Thus \eqref{D-2} are reduced to the computation of the quadratic nonlinear term
\begin{equation}\label{D-4}
\delta^{h-\frac12} \|\nabla(- \Delta)^{-1}J_1\|_{L^2(\Omega_\delta)}
\end{equation}
and the cubic term
\begin{equation}\label{D-5}
\delta^{h-\frac12} \|\nabla(- \Delta)^{-1}J_2\|_{L^2(\Omega_\delta)}.
\end{equation}

We first take care of the quadratic nonlinearity $J_1$.
Note the structural identity:
\begin{align*}
-\big(\partial_i\Gamma^\beta Y^i \partial_j\Gamma^\gamma Y^j -
  \partial_i\Gamma^\gamma Y^j\partial_j\Gamma^\beta Y^i\big)
=(\partial_i, \partial_j) \Gamma^\beta Y^i \cdot (-\partial_j,\partial_i)\Gamma^\gamma Y^j .
\end{align*}
$J_1$ can be computed as follows:
\begin{align}\label{D-6}
&\frac12 (\partial_t^2-\Delta)
\sum_{\beta + \gamma =\alpha} C_{\alpha}^\beta
(\partial_i, \partial_j) \Gamma^\beta Y^i \cdot (-\partial_j,\partial_i)\Gamma^\gamma Y^j \\ \nonumber
&=\frac12
\sum_{\beta + \gamma =\alpha} C_{\alpha}^\beta
(\partial_i, \partial_j) (\partial_t^2-\Delta) \Gamma^\beta Y^i \cdot (-\partial_j,\partial_i)\Gamma^\gamma Y^j \\ \nonumber
&\quad+\frac12
\sum_{\beta + \gamma =\alpha} C_{\alpha}^\beta
(\partial_i, \partial_j) \Gamma^\beta Y^i \cdot (-\partial_j,\partial_i) (\partial_t^2-\Delta)\Gamma^\gamma Y^j \\ \nonumber
&\quad+
\sum_{\beta + \gamma =\alpha} C_{\alpha}^\beta
(\partial_i, \partial_j) \partial_t \Gamma^\beta Y^i \cdot (-\partial_j,\partial_i)\partial_t\Gamma^\gamma Y^j \\ \nonumber
&\quad -
\sum_{\beta + \gamma =\alpha} C_{\alpha}^\beta
(\partial_i, \partial_j) \partial_k\Gamma^\beta Y^i \cdot (-\partial_j,\partial_i)\partial_k \Gamma^\gamma Y^j \\\nonumber
&=\frac12 (\partial_i, \partial_j) \cdot
\sum_{\beta + \gamma =\alpha} C_{\alpha}^\beta
 (\partial_t^2-\Delta) \Gamma^\beta Y^i  (-\partial_j,\partial_i)\Gamma^\gamma Y^j \\ \nonumber
&\quad+\frac12 (-\partial_j,\partial_i) \cdot
\sum_{\beta + \gamma =\alpha} C_{\alpha}^\beta
(\partial_i, \partial_j) \Gamma^\beta Y^i  (\partial_t^2-\Delta)\Gamma^\gamma Y^j\\\nonumber
&\quad+(-\partial_j,\partial_i)  \cdot
\sum_{\beta + \gamma =\alpha, |\beta|\leq |\gamma|} C_{\alpha}^\beta
\big((\partial_i, \partial_j)
\partial_t \Gamma^\beta Y^i \partial_t\Gamma^\gamma Y^j
- (\partial_i, \partial_j) \partial_k\Gamma^\beta Y^i \partial_k \Gamma^\gamma Y^j \big) \\ \nonumber
&\quad + (\partial_i, \partial_j) \cdot
\sum_{\beta + \gamma =\alpha, |\beta|> |\gamma|} C_{\alpha}^\beta
\big( \partial_t \Gamma^\beta Y^i  (-\partial_j,\partial_i)\partial_t\Gamma^\gamma Y^j
-\partial_k\Gamma^\beta Y^i (-\partial_j,\partial_i)\partial_k \Gamma^\gamma Y^j\big).
\end{align}
Inserting $\eqref{D-6}$ into \eqref{D-4}, one has
\begin{align*}
&\delta^{h-\frac12}\|\nabla(- \Delta)^{-1} J_1\|_{L^2(\Omega_\delta)}
\\[-4mm] \nonumber\\\nonumber
&\lesssim \Pi(\alpha, 2)+ \delta^{h-\frac12} \big\|
  |(\partial_t^2- \Delta)\Gamma^\alpha Y| |\nabla Y|\big\|_{L^2(\Omega_\delta)}.
\end{align*}

Next, let us compute $J_2$.
\begin{eqnarray*} 
&&J_2=-(\partial_t^2 - \Delta)
\sum_{\beta+\gamma+\iota=\alpha}C_\alpha  ^\beta C_{\alpha-\beta}  ^\gamma
\left|
\begin{matrix}
 \partial_1\Gamma^\beta Y^1 &\partial_2 \Gamma^\beta Y^1 & \partial_3 \Gamma^\beta Y^1\\
 \partial_1\Gamma ^\gamma Y^2& \partial_2 \Gamma ^\gamma Y^2& \partial_3\Gamma ^\gamma Y^2\\
\partial_1\Gamma ^\iota Y^3&\partial_2\Gamma^\iota Y^3&\partial_3\Gamma^\iota Y^3
\end{matrix}
\right|.
\end{eqnarray*}
Expanding the determinant, one has
\begin{eqnarray*}
\left|
\begin{matrix}
 \partial_1 f &\partial_2 f & \partial_3 f\\
 \partial_1 g &\partial_2 g & \partial_3 g\\
 \partial_1 h &\partial_2 h & \partial_3 h
\end{matrix}
\right|
&&=\partial_1 f \left|
\begin{matrix}
 \partial_2 g & \partial_3 g\\
 \partial_2 h & \partial_3 h
\end{matrix}
\right|
-\partial_2 f
\left|
\begin{matrix}
 \partial_1 g  & \partial_3 g\\
 \partial_1 h  & \partial_3 h
\end{matrix}
\right|
+\partial_3 f
\left|
\begin{matrix}
 \partial_1 g &\partial_2 g \\
 \partial_1 h &\partial_2 h
\end{matrix}
\right|
\\ \nonumber
&&=\nabla f\cdot
\left(
\left|
\begin{matrix}
 \partial_2 g & \partial_3 g\\
 \partial_2 h & \partial_3 h
\end{matrix}
\right|,
-\left|
\begin{matrix}
 \partial_1 g  & \partial_3 g\\
 \partial_1 h  & \partial_3 h
\end{matrix}
\right|,
\left|
\begin{matrix}
 \partial_1 g &\partial_2 g \\
 \partial_1 h &\partial_2 h
\end{matrix}
\right|
\right).
\end{eqnarray*}
In order to simplify the computations, we introduce the three dimensional curl operator on
two dimensional vector maps as follows:
\begin{align*}
\nabla\times (g,h)=
\left(
\left|
\begin{matrix}
 \partial_2 g & \partial_3 g\\
 \partial_2 h & \partial_3 h
\end{matrix}
\right|,
-\left|
\begin{matrix}
 \partial_1 g  & \partial_3 g\\
 \partial_1 h  & \partial_3 h
\end{matrix}
\right|,
\left|
\begin{matrix}
 \partial_1 g &\partial_2 g \\
 \partial_1 h &\partial_2 h
\end{matrix}
\right|
\right).
\end{align*}
It's easy to check the following property holds.
\begin{align*}
&\nabla \cdot \nabla \times (f,h)=0,\\
&\nabla f\cdot \nabla\times (g,h)=\nabla\cdot \big(f \nabla\times (g,h) \big),\\
& \left|
\begin{matrix}
 \partial_1 f &\partial_2 f & \partial_3 f\\
 \partial_1 g &\partial_2 g & \partial_3 g\\
 \partial_1 h &\partial_2 h & \partial_3 h
\end{matrix}
\right|=\nabla f\cdot \nabla\times (g,h)
=\nabla g\cdot \nabla\times (h,f)
=\nabla h\cdot \nabla\times (f,g).
\end{align*}
Now we calculate using the curl operator as follows:
\begin{eqnarray}\label{D-8}
J_2&&=-(\partial_t^2 - \Delta)
\sum_{\beta+\gamma+\iota=\alpha}C_\alpha  ^\beta C_{\alpha-\beta}  ^\gamma
\left|
\begin{matrix}
 \partial_1\Gamma^\beta Y^1 &\partial_2 \Gamma^\beta Y^1 & \partial_3 \Gamma^\beta Y^1\\
 \partial_1\Gamma ^\gamma Y^2& \partial_2 \Gamma ^\gamma Y^2& \partial_3\Gamma ^\gamma Y^2\\
\partial_1\Gamma ^\iota Y^3&\partial_2\Gamma^\iota Y^3&\partial_3\Gamma^\iota Y^3
\end{matrix}
\right|
\\\nonumber
&&=-\sum_{\beta+\gamma+\iota=\alpha}C_\alpha  ^\beta C_{\alpha-\beta}  ^\gamma
\nabla (\partial_t^2 - \Delta) \Gamma^\beta Y^1 \cdot \nabla\times(\Gamma ^\gamma Y^2, \Gamma ^\iota Y^3)
\\ \nonumber
&&\quad -
\sum_{\beta+\gamma+\iota=\alpha}C_\alpha  ^\beta C_{\alpha-\beta}  ^\gamma
\nabla (\partial_t^2 - \Delta) \Gamma ^\gamma Y^2 \cdot \nabla\times( \Gamma ^\iota Y^3, \Gamma^\beta Y^1)
\\ \nonumber
&&\quad -\sum_{\beta+\gamma+\iota=\alpha}C_\alpha^\beta C_{\alpha-\beta}^\gamma
\nabla (\partial_t^2 - \Delta)\Gamma^\iota Y^3  \cdot \nabla\times
(\Gamma^\beta Y^1,\Gamma ^\gamma Y^2)\\ \nonumber
&&\quad -2\sum_{\beta+\gamma+\iota=\alpha}C_\alpha^\beta C_{\alpha-\beta}^\gamma
\nabla\partial_t\Gamma^\beta Y^1  \cdot \partial_t\nabla\times (\Gamma ^\gamma Y^2, \Gamma^\iota Y^3)\\ \nonumber
&&\quad +2\sum_{\beta+\gamma+\iota=\alpha}C_\alpha^\beta C_{\alpha-\beta}^\gamma
\nabla\partial_k\Gamma^\beta Y^1  \cdot \partial_k\nabla\times (\Gamma ^\gamma Y^2, \Gamma^\iota Y^3).
 \nonumber
\end{eqnarray}
The first three terms on the right hand side of \eqref{D-8} can be organized as follows:
\begin{eqnarray}\label{D-9}
&&-\nabla\cdot \sum_{\beta+\gamma+\iota=\alpha}C_\alpha  ^\beta C_{\alpha-\beta}^\gamma
(\partial_t^2 - \Delta) \Gamma^\beta Y^1 \nabla\times(\Gamma ^\gamma Y^2, \Gamma ^\iota Y^3)
\\ \nonumber
&&\quad -\nabla\cdot
\sum_{\beta+\gamma+\iota=\alpha}C_\alpha  ^\beta C_{\alpha-\beta}  ^\gamma
(\partial_t^2 - \Delta) \Gamma ^\gamma Y^2  \nabla\times(\Gamma^\beta Y^1 , \Gamma ^\iota Y^3)
\\ \nonumber
&&\quad -\nabla \cdot\sum_{\beta+\gamma+\iota=\alpha}C_\alpha  ^\beta C_{\alpha-\beta}  ^\gamma
 (\partial_t^2 - \Delta)\Gamma ^\iota Y^3  \nabla\times
(\Gamma^\beta Y^1,\Gamma ^\gamma Y^2) .
\end{eqnarray}
The remaining two lines of \eqref{D-8} are divided into three types:
$|\beta|\geq |\alpha|/3$, $|\gamma|\geq |\alpha|/3$
and $|\iota|\geq |\alpha|/3$.
Due the symmetry, their treatment is the same. Hence
we only present the details for $|\beta|\geq |\alpha|/3$.
\begin{eqnarray}\label{D-10}
&&-2\sum_{\beta+\gamma+\iota=\alpha,\ |\beta|\geq |\alpha|/3}
C_\alpha^\beta C_{\alpha-\beta}^\gamma
\nabla\partial_t\Gamma^\beta Y^1  \cdot \partial_t\nabla\times (\Gamma ^\gamma Y^2, \Gamma^\iota Y^3)\\ \nonumber
&&+2\sum_{\beta+\gamma+\iota=\alpha,\ |\beta|\geq |\alpha|/3}
C_\alpha^\beta C_{\alpha-\beta}^\gamma
\nabla\partial_k\Gamma^\beta Y^1  \cdot \partial_k\nabla\times (\Gamma ^\gamma Y^2, \Gamma^\iota Y^3)
\\ \nonumber
&&=-2\nabla \cdot\sum_{\beta+\gamma+\iota=\alpha,\ |\beta|\geq |\alpha|/3}
C_\alpha^\beta C_{\alpha-\beta}^\gamma
\partial_t\Gamma^\beta Y^1  \partial_t\nabla\times (\Gamma ^\gamma Y^2, \Gamma^\iota Y^3)\\ \nonumber
&&\quad +2\nabla\cdot\sum_{\beta+\gamma+\iota=\alpha,\ |\beta|\geq |\alpha|/3}
C_\alpha^\beta C_{\alpha-\beta}^\gamma
\partial_k\Gamma^\beta Y^1 \partial_k\nabla\times (\Gamma ^\gamma Y^2, \Gamma^\iota Y^3).
\end{eqnarray}
Inserting \eqref{D-9} and \eqref{D-10} into \eqref{D-8}, \eqref{D-5} will be bounded as follows:
\begin{align*}
&\delta^{h-\frac12}\|\nabla(- \Delta)^{-1} J_2\|_{L^2(\Omega_\delta)} \\ \nonumber
&\lesssim
\delta^{h-\frac12} \sum_{\beta+\gamma+\iota=\alpha} \big\|
|(\partial_t^2 - \Delta) \Gamma^\beta Y|\cdot |\nabla\Gamma ^\gamma Y|\cdot |\nabla\Gamma ^\iota Y|
\big\|_{L^2(\Omega_\delta)}
\\ \nonumber
&\quad + \delta^{h-\frac12}\sum_{\beta+\gamma+\iota=\alpha,\ |\beta|\geq |\alpha|/3}
\| \partial\Gamma^\beta Y\|_{L^2(\Omega_\delta)}
\big\|
|\partial\nabla\Gamma ^\gamma Y|\cdot|\nabla\Gamma^\iota Y|
+|\nabla\Gamma ^\gamma Y|\cdot|\partial\nabla\Gamma^\iota Y|
\big\|_{L^\infty(\Omega_\delta)}\\\nonumber
&\lesssim
\delta^{h-\frac12}
\big\|(\partial_t^2 - \Delta) \Gamma^\alpha Y|\cdot |\nabla Y|^2
\big\|_{L^2(\Omega_\delta)}+\Pi(\alpha,2).
\end{align*}

Collecting all the above calculation, \eqref{D-1} becomes
\begin{align}\label{D-11}
\delta^{h-\frac12}\|\nabla\Gamma^\alpha p\|_{L^2(\Omega_\delta)}
 \lesssim
& \Pi(\alpha, 2) +\delta^{h-\frac12} \big\|
  |(\partial_t^2- \Delta)\Gamma^\alpha Y| |\nabla Y|\big\|_{L^2(\Omega_\delta)}\\[-4mm]\nonumber
  \\\nonumber
&+\delta^{h-\frac12} \big\|
|(\partial_t^2 - \Delta) \Gamma^\alpha Y|\cdot |\nabla Y|^2
\big\|_{L^2(\Omega_\delta)}.
\end{align}
Combined with \eqref{Elasticity-D} yields
\begin{align*}
\delta^{h-\frac12}\|(\partial_t^2- \Delta)\Gamma^\alpha Y\|_{L^2(\Omega_\delta)}
 \lesssim
& \Pi(\alpha, 2) +\delta^{h-\frac12}
\|(\partial_t^2- \Delta)\Gamma^\alpha Y\|_{L^2(\Omega_\delta)} \|\nabla Y \|_{L^\infty(\Omega_\delta)}\\[-4mm]\nonumber
  \\\nonumber
&+\delta^{h-\frac12} \|(\partial_t^2 - \Delta) \Gamma^\alpha Y\|_{L^2(\Omega_\delta)}
\|\nabla Y\|^2_{L^\infty(\Omega_\delta)}.
\end{align*}
Due to the assumption that $\|\nabla Y\|_{L^\infty(\Omega_\delta)} \ll 1$.
The last two terms are absorbed by the left hand side, we obtain
\begin{align}\label{D-13}
\delta^{h-\frac12}\|(\partial_t^2- \Delta)\Gamma^\alpha Y\|_{L^2(\Omega_\delta)}
 \lesssim \Pi(\alpha, 2) .
\end{align}
Inserting the above \eqref{D-13} into \eqref{D-11} yields
\begin{align*}
\delta^{h-\frac12}\| \nabla\Gamma^\alpha p\|_{L^2(\Omega_\delta)}
 \lesssim \Pi(\alpha, 2) .
\end{align*}
This ends the proof of the lemma.
\end{proof}

Next, we give a direct estimate of the scaled
$\|\nabla\Gamma^\alpha p\|_{L^2} $ and
$\|(\partial_t^2 -\Delta)\Gamma^\alpha Y\|_{L^2}$.
This will be used in the energy estimate.
\begin{lem}\label{SNB-1}
Let $\kappa\geq 15$. Suppose $\mathcal{E}_{\kappa-3}+\mathcal{F}_{\kappa-3} \ll 1 $.  Then there holds
\begin{align}\nonumber
&\sum_{|\alpha|\leq \kappa-4}\big( \delta^{h-\frac12}\|\nabla\Gamma^\alpha p\|_{L^2(\Omega_\delta)} +
\delta^{h-\frac12} \|(\partial_t^2 -\Delta)\Gamma^\alpha Y\|_{L^2(\Omega_\delta)} \big) \lesssim
\mathcal{E}_{\kappa-3}+\mathcal{F}_{\kappa-3}, \\\nonumber
&\sum_{|\alpha|\leq \kappa-1}\big( \delta^{h-\frac12}\|\nabla\Gamma^\alpha p\|_{L^2(\Omega_\delta)} +
\delta^{h-\frac12} \|(\partial_t^2 -\Delta)\Gamma^\alpha Y\|_{L^2(\Omega_\delta)} \big) \lesssim
\mathcal{E}_{\kappa}^{\frac12} (\mathcal{E}_{\kappa-3}^{\frac12}+\mathcal{F}_{\kappa-3}^{\frac12} ) .
\end{align}
\end{lem}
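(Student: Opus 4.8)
The plan is to derive Lemma \ref{SNB-1} from Lemma \ref{SN-1} by estimating each term of $\Pi(\alpha,2)$ through H\"older's inequality, the anisotropic Sobolev embeddings of Section \ref{ASI}, and the elliptic bounds of Section \ref{EEITD}, while carefully tracking the powers of $\delta$ dictated by the scaling conventions (a $\delta^{-1/2}$ per $L^2$-norm in $x_3$, a $\delta$ per $\partial_3$ derivative). Since every multi-index appearing inside $\Pi(\alpha,2)$ is a \emph{proper} sub-multi-index of $\alpha$ (the sums run over $\beta\neq\alpha$, over $|\beta|\le|\gamma|$, over $|\beta|>|\gamma|$, or over $|\beta|\ge|\alpha|/3$), I would run an induction on $|\alpha|$, proving both displayed inequalities simultaneously: at step $\alpha$, Lemma \ref{SN-1} reduces the left-hand side to $\Pi(\alpha,2)$, and each summand of $\Pi(\alpha,2)$ is bilinear or trilinear in generalized derivatives of $Y$ of strictly lower order, so each factor can be estimated either directly by $\mathcal{E}_\kappa$, $\mathcal{E}_{\kappa-3}$, $\mathcal{F}_{\kappa-3}$ or, when it is itself of the form $(\partial_t^2-\Delta)\Gamma^\beta Y$, by the inductive hypothesis at order $|\beta|<|\alpha|$.

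For a typical bilinear term, say $\Pi_1=\delta^{h-\frac12}\big\||(\partial_t^2-\Delta)\Gamma^\beta Y||\nabla\Gamma^\gamma Y|\big\|_{L^2}$, I would split $h=l+m$ according to $\beta=(l,b)$, $\gamma=(m,c)$ and distribute $\delta^{h-\frac12}=\delta^{l}\cdot\delta^{m-\frac12}$ or $\delta^{l-\frac12}\cdot\delta^{m}$; whichever of $\beta,\gamma$ has the larger order is placed in $L^2$ — directly bounded by $\mathcal{E}_\kappa^{\frac12}$ for the $|\alpha|\le\kappa-1$ estimate, and by $\mathcal{E}_{\kappa-3}+\mathcal{F}_{\kappa-3}$ for the $(\partial_t^2-\Delta)\Gamma^\beta Y$ factor via the inductive hypothesis — while the smaller-order factor goes into $L^\infty$ via Lemma \ref{Sob2} (using Lemma \ref{sob} when one needs to economize on horizontal derivatives). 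Because $\kappa\ge15$, the lower-order factor, even after absorbing the two horizontal derivatives and the one $\partial_3$ derivative that Lemma \ref{Sob2} costs, still has order $\le\kappa-4$, so its $L^\infty$ norm is controlled by $\mathcal{E}_{\kappa-3}^{\frac12}+\mathcal{F}_{\kappa-3}^{\frac12}$; crucially, the $\partial_3$-term in Lemma \ref{Sob2} is exactly what forces the $\mathcal{F}_{\kappa-3}$ contribution. Since $\mathcal{E}_{\kappa-3}+\mathcal{F}_{\kappa-3}\ll1$, the resulting products close: for $|\alpha|\le\kappa-4$ one obtains $(\mathcal{E}_{\kappa-3}^{\frac12}+\mathcal{F}_{\kappa-3}^{\frac12})^2\lesssim\mathcal{E}_{\kappa-3}+\mathcal{F}_{\kappa-3}$, and for $|\alpha|\le\kappa-1$ one obtains $\mathcal{E}_\kappa^{\frac12}(\mathcal{E}_{\kappa-3}^{\frac12}+\mathcal{F}_{\kappa-3}^{\frac12})$. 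The trilinear terms $\Pi_4$ and $\Pi_5$ are treated identically: the unique high-order factor (of order at most $|\alpha|$) goes to $L^2$, and the remaining two factors, each of order $\lesssim\frac23|\alpha|$, go to $L^\infty$.

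The terms $\Pi_2$ and $\Pi_3$ carry the extra operator $\nabla(-\Delta)^{-1}(-\partial_j,\partial_i)\cdot$, respectively $\nabla(-\Delta)^{-1}(\partial_i,\partial_j)\cdot$; here I would first invoke Lemmas \ref{OP1} and \ref{OP2}, which show that $\nabla(-\Delta)^{-1}$ composed with a horizontal divergence $\partial_v$ or with $\partial_3$ is bounded on $L^2$ with precisely the $\delta$-weight built into their statements. This reduces $\Pi_2$ and $\Pi_3$ to the $\delta^{h-\frac12}$-weighted $L^2$ norm of the bilinear expression inside, which schematically is $(\partial^2\Gamma^\beta Y)(\partial\Gamma^\gamma Y)$ with $|\beta|\le|\gamma|$; this is then handled by the same low/high H\"older splitting as above (the null structure of that bilinear form plays no role at this stage and will only be exploited later, in Section \ref{WE}).

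The main obstacle is purely technical bookkeeping: one must simultaneously (i) check in each of the finitely many cases that the low-order factor stays in the range where the Sobolev embeddings are dominated by $\mathcal{E}_{\kappa-3}^{\frac12}+\mathcal{F}_{\kappa-3}^{\frac12}$, which is exactly where the hypothesis $\kappa\ge15$ is consumed; (ii) keep the powers of $\delta$ matched to the definitions of $\mathcal{E}_\kappa$ and $\mathcal{F}_\kappa$, in particular routing any spare $\partial_3$ produced by Lemma \ref{Sob2} to $\mathcal{F}_{\kappa-3}$ rather than to $\mathcal{E}$ (a uniform bound $\mathcal{F}_{\kappa-3}\lesssim\mathcal{E}_\kappa$ is false, since converting a $\partial_3$ into an extra $\delta^{-1}$ is precisely the singularity $\mathcal{F}$ is introduced to absorb); and (iii) use the inductive hypothesis to dispose of the $(\partial_t^2-\Delta)\Gamma^\beta Y$ factors in $\Pi_1$ and $\Pi_4$. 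Once these are arranged, summing over $|\alpha|\le\kappa-4$ and over $|\alpha|\le\kappa-1$ yields the two stated inequalities.
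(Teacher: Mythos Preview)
Your proposal is correct and follows essentially the same route as the paper: invoke Lemma~\ref{SN-1}, estimate each $\Pi_i$ by H\"older plus Lemma~\ref{Sob2} (routing the extra $\partial_3$ to $\mathcal{F}_{\kappa-3}$), bound $\Pi_2,\Pi_3$ after removing $\nabla(-\Delta)^{-1}\partial$ via Lemmas~\ref{OP1}--\ref{OP2}, and close on the recursive $(\partial_t^2-\Delta)\Gamma^\beta Y$ factors. The only cosmetic difference is that the paper, rather than inducting on $|\alpha|$, writes a single intermediate inequality (your step~$\alpha$) keeping $\sum_{|\beta|<|\alpha|}\delta^{l-\frac12}\|(\partial_t^2-\Delta)\Gamma^\beta Y\|_{L^2}$ on the right, sums over $|\alpha|$, and absorbs that term using $\mathcal{E}_{\kappa-3}^{1/2}\ll 1$; this sidesteps the minor bookkeeping issue that, after applying Lemma~\ref{Sob2} to a low-order $(\partial_t^2-\Delta)\Gamma^\beta Y$ in $L^\infty$, the resulting index $|\beta'|\le[\alpha/2]+3$ need not be $<|\alpha|$ for small $|\alpha|$ (the paper simply bounds that factor directly by $\mathcal{E}_{[\alpha/2]+5}^{1/2}+\mathcal{F}_{[\alpha/2]+5}^{1/2}$ rather than invoking the inductive hypothesis).
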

\begin{proof}
We first claim that there holds the following fact:
\begin{align}\label{PS}
&\delta^{h-\frac12}\|\nabla\Gamma^\alpha p\|_{L^2(\Omega_\delta)} +
\delta^{h-\frac12} \|(\partial_t^2 -\Delta)\Gamma^\alpha Y\|_{L^2(\Omega_\delta)}  \\[-4mm]\nonumber\\\nonumber
&\lesssim
\sum_{|\beta|<|\alpha| }
\delta^{l-\frac12}\| (\partial_t^2-\Delta)\Gamma^\beta Y\|_{L^2(\Omega_\delta)}
(\mathcal{E}_{[\alpha/2]+4}^{\frac 12}+\mathcal{E}_{[\alpha/2]+4}) \\\nonumber
&+\mathcal{E}_{|\alpha|+1}^{\frac12}(\mathcal{E}_{[\alpha/2]+5}^{\frac12}+\mathcal{F}_{[\alpha/2]+5}^{\frac12})
+\mathcal{E}_{|\alpha|+1}^{\frac12}(\mathcal{E}_{[\alpha/2]+5}+\mathcal{F}_{[\alpha/2]+5})\\\nonumber
&+ (\mathcal{E}_{[\alpha/2]+5}^{\frac12}+\mathcal{F}_{[\alpha/2]+5}^{\frac12})
\mathcal{E}_{|\alpha|+1}^{\frac12}\mathcal{E}_{[\alpha/2]+4}^{\frac12}.
\end{align}

In terms of Lemma \ref{SN-1}, we need to estimate $\Pi(\alpha,2)$.
For $\Pi_1$, by Lemma \ref{Sob2}, there holds
\begin{align*}
\sum_{\beta + \gamma = \alpha,\ \beta \neq \alpha} \Pi_1
\lesssim
& \delta^{h-\frac12} \sum_{\beta + \gamma = \alpha,\ |\gamma|\leq |\beta|<|\alpha| }
\| (\partial_t^2-\Delta)\Gamma^\beta Y\|_{L^2(\Omega_\delta)} \| \nabla \Gamma^\gamma Y\|_{L^\infty(\Omega_\delta)} \\\nonumber
&+\delta^{h-\frac12}\sum_{\beta + \gamma = \alpha,\ |\beta|\leq |\gamma| }
\| (\partial_t^2-\Delta)\Gamma^\beta Y\|_{L^\infty(\Omega_\delta)} \| \nabla \Gamma^\gamma Y\|_{L^2(\Omega_\delta)}  \\\nonumber
\lesssim
&\sum_{|\beta|<|\alpha| }
\delta^{l-\frac12}\| (\partial_t^2-\Delta)\Gamma^\beta Y\|_{L^2(\Omega_\delta)}
\mathcal{E}_{[\alpha/2]+4}^{\frac 12} \\\nonumber
&+(\mathcal{E}_{[\alpha/2]+5}^{\frac12}+\mathcal{F}_{[\alpha/2]+5}^{\frac12})  \mathcal{E}_{|\alpha|+1}^{\frac12}.
\end{align*}
For $\Pi_4$, by Lemma \ref{Sob2}, there holds
\begin{align*}
\sum_{\beta+\gamma+\iota=\alpha,\ \beta\neq \alpha} \Pi_4
\lesssim
&\sum_{\beta + \gamma+\iota = \alpha,\ [\alpha/2]\leq |\beta|<|\alpha| }
\delta^{h-\frac12}\big\|
|(\partial_t^2 - \Delta) \Gamma^\beta Y|\cdot |\nabla\Gamma ^\gamma Y|\cdot |\nabla\Gamma ^\iota Y|
\big\|_{L^2(\Omega_\delta)} \\\nonumber
&+\sum_{\beta + \gamma+\iota = \alpha,\ |\beta|\leq [\alpha/2]}
\delta^{h-\frac12}\big\|
|(\partial_t^2 - \Delta) \Gamma^\beta Y|\cdot |\nabla\Gamma ^\gamma Y|\cdot |\nabla\Gamma ^\iota Y|
\big\|_{L^2(\Omega_\delta)} \\\nonumber
\lesssim &\sum_{|\beta|<|\alpha| }
\delta^{l-\frac12}\|(\partial_t^2 - \Delta) \Gamma^\beta Y\|_{L^2(\Omega_\delta)}
\mathcal{E}_{[\alpha/2]+4} \\\nonumber
&+ (\mathcal{E}_{[\alpha/2]+5}^{\frac12}+\mathcal{F}_{[\alpha/2]+5}^{\frac12})
\mathcal{E}_{|\alpha|+1}^{\frac12}\mathcal{E}_{[\alpha/2]+4}^{\frac12}.
\end{align*}

Now we take care of $\Pi_2$ and $\Pi_3$. Due to their similarity,
we only present the estimate details for $\Pi_2$.
By Lemma \ref{Sob2}, there holds
\begin{align} \nonumber
 \sum_{\beta + \gamma = \alpha,\ |\beta| \leq|\gamma|} \Pi_2
&\leq \sum_{\beta + \gamma = \alpha,\ |\beta| \leq|\gamma|}  \delta^{h-\frac12}\big\|
|\nabla \partial \Gamma^\beta Y|\cdot  |\partial \Gamma^\gamma Y| \big\|_{L^2(\Omega_\delta)}
\\[-4mm]\nonumber\\\nonumber
&\lesssim
(\mathcal{E}_{[\alpha/2]+5}^{\frac{1}{2} }+\mathcal{F}_{[\alpha/2]+5}^{\frac{1}{2} }) \mathcal{E}_{|\alpha|+1}^{\frac{1}{2} }.
\end{align}
$\Pi_5$ will be bounded by
\begin{align*}
\sum_{\beta+\gamma+\iota=\alpha,\ |\beta|\geq |\alpha|/3}\Pi_5
\lesssim
\mathcal{E}_{|\alpha|+1}^{\frac{1}{2} }
(\mathcal{E}_{[\alpha/2]+5}^{\frac{1}{2} }+\mathcal{F}_{[\alpha/2]+5}^{\frac{1}{2} })
(\mathcal{E}_{[\alpha/2]+5}^{\frac{1}{2} }+\mathcal{F}_{[\alpha/2]+5}^{\frac{1}{2} }) .
\end{align*}
Thus \eqref{PS} is proved.

Now for $\kappa\geq 15$, $|\alpha|\leq \kappa-4$, one has $[\alpha/2]+5\leq \kappa-3$.
Hence \eqref{PS} becomes
\begin{align}\nonumber
&\sum_{|\alpha|\leq \kappa-4} \big(\delta^{h-\frac12}\|\nabla\Gamma^\alpha p\|_{L^2(\Omega_\delta)} +
\delta^{h-\frac12} \|(\partial_t^2 -\Delta)\Gamma^\alpha Y\|_{L^2(\Omega_\delta)} \big)\\\nonumber
& \lesssim\sum_{|\beta|<\kappa-5 }
\delta^{l-\frac12}\| (\partial_t^2-\Delta)\Gamma^\beta Y\|_{L^2(\Omega_\delta)} \mathcal{E}_{\kappa-3}^{\frac 12} \\\nonumber
&\quad +\mathcal{E}_{\kappa-3}^{\frac 12}
(\mathcal{E}_{\kappa-3}^{\frac 12}+\mathcal{F}_{\kappa-3}^{\frac 12}).
\end{align}
 Absorbing the first term on the right hand side yields the first inequality.

Next for $\kappa\geq 15$, $|\alpha|\leq \kappa-1$, one has $[\alpha/2]+5\leq \kappa-3$.
Hence \eqref{PS} yields
\begin{align}\nonumber
&\sum_{|\alpha|\leq \kappa-1} \big(\delta^{h-\frac12}\|\nabla\Gamma^\alpha p\|_{L^2(\Omega_\delta)} +
\delta^{h-\frac12} \|(\partial_t^2 -\Delta)\Gamma^\alpha Y\|_{L^2(\Omega_\delta)} \big)\\\nonumber
& \lesssim\sum_{|\beta|<\kappa-2 }
\delta^{l-\frac12}\| (\partial_t^2-\Delta)\Gamma^\beta Y\|_{L^2(\Omega_\delta)} \mathcal{E}_{\kappa-3}^{\frac 12} \\\nonumber
&\quad +\mathcal{E}_{\kappa}^{\frac 12}
(\mathcal{E}_{\kappa-3}^{\frac 12}+\mathcal{F}_{\kappa-3}^{\frac 12}).
\end{align}
Absorbing the first term on the right hand side yields the second inequality.
\end{proof}

Next, we estimate the pressure with the $t$-factor.
\begin{lem}\label{SN-3}
Let $\kappa \geq 15$, $t\geq 1$. There exists $0<\eta\ll 1$ such that
if $\mathcal{E}_{\kappa - 3}+\mathcal{F}_{\kappa-3} \leq \eta$. Then
 there hold
\begin{equation}\nonumber
t\sum_{|\alpha|\leq \kappa-5}
\delta^{(h-\frac12)}\big(\|\nabla\Gamma^{\alpha} p\|_{L^2(\Omega_\delta)}
+  \|(\partial_t^2 - \Delta)\Gamma^{\alpha} Y\|_{L^2(\Omega_\delta)}\big)
\lesssim \mathcal{E}_{ \kappa - 3}^{\frac{1}{2}}
\big(\mathcal{X}_{\kappa - 3}^{\frac{1}{2}} +\mathcal{E}_{\kappa - 3}^{\frac{1}{2}}
+ \mathcal{F}_{\kappa-3}^{\frac{1}{2}}\big)
\end{equation}
and
\begin{equation}\nonumber
t\sum_{|\alpha|\leq \kappa-2}
\delta^{(h-\frac12)} \big(\|\nabla\Gamma^{\alpha} p\|_{L^2(\Omega_\delta)} +
 \|(\partial_t^2 - \Delta)\Gamma^{\alpha} Y\|_{L^2(\Omega_\delta)}\big)
\lesssim \mathcal{E}_{\kappa}^{\frac{1}{2}}\big(
\mathcal{X}_{ \kappa-3}^{\frac{1}{2}} +
\mathcal{E}_{ \kappa-3}^{\frac{1}{2}} +
\mathcal{F}_{ \kappa-3}^{\frac{1}{2}}\big).
\end{equation}
\end{lem}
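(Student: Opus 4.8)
The plan is to combine the already-established algebraic reduction of Lemma \ref{SN-1} with the weighted $L^2$ estimate of Lemma \ref{WE-1} (applied to the various $\Gamma^\alpha Y$), splitting the spatial domain according to whether one is near or far from the light cone. The key point is that multiplying by the $t$-factor and distributing it through $\Pi(\alpha,2)$ produces, at each term, either a factor $t|(\partial_t^2-\Delta)\Gamma^\beta Y|$ which Lemma \ref{WE-1} (or rather its proof, via \eqref{C-6}) converts into $|(t-r)\partial^2\Gamma^\beta Y|$ plus lower order, i.e.\ into the weighted norm $\mathcal{X}$; or a factor where the $t$ is absorbed into a pointwise $L^\infty$ bound on a lower-order quantity via the decay Lemma \ref{KS-A}. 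So the skeleton is: (i) bound $t\,\delta^{h-\frac12}\|\nabla\Gamma^\alpha p\|_{L^2}+t\,\delta^{h-\frac12}\|(\partial_t^2-\Delta)\Gamma^\alpha Y\|_{L^2}$ by $t\,\Pi(\alpha,2)$ using Lemma \ref{SN-1}; (ii) for each of $\Pi_1,\dots,\Pi_5$, put the lower-order factor in $L^\infty$ using the anisotropic Sobolev inequalities of Section \ref{ASI}, keep the higher-order factor in $L^2$, and then either use the weighted $L^\infty$ bounds \eqref{K-S-1}–\eqref{K-S-3} which already carry a $t^{-1/2}$ or $t^{-1}$ decay, or iterate Lemma \ref{SN-1}-type reductions to trade $(\partial_t^2-\Delta)\Gamma^\beta Y$ for lower order, finally invoking the inductive hypothesis on $|\alpha|$.

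More concretely, for the first (lower-order) inequality with $|\alpha|\le\kappa-5$: since $[\alpha/2]+5\le\kappa-3$ the ``small'' factors are controlled by $\mathcal{E}_{\kappa-3}^{1/2}+\mathcal{F}_{\kappa-3}^{1/2}\lesssim\eta^{1/2}\ll1$, so terms like $t\Pi_2,t\Pi_3$ — which after the elliptic bounds of Lemma \ref{OP1}, Lemma \ref{OP2} look like $t\,\delta^{h-\frac12}\||\nabla\partial\Gamma^\beta Y||\partial\Gamma^\gamma Y|\|_{L^2}$ — are split: when $|\gamma|\le|\beta|$, one puts $\partial\Gamma^\gamma Y$ (low order, at most $[\alpha/2]+1\le\kappa-4$ derivatives on $\partial Y$) in $L^\infty$ via Lemma \ref{K-S-O} or \eqref{K-S-3}, which supplies the $t\ln^{-1/2}(e+t)$ or $t^{-1}$ decay turning the $t$-factor into something bounded, leaving $\mathcal{E}_{\kappa-3}^{1/2}$ times $\mathcal{X}^{1/2}_{\kappa-3}$ (the $(t-r)$-weighted second derivative is exactly $\mathcal{X}$). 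For $t\Pi_1$ and $t\Pi_4$ one uses \eqref{C-6} to replace $t|(\partial_t^2-\Delta)\Gamma^\beta Y|$ by $|(t-r)\partial^2\Gamma^\beta Y|+|\partial Z\Gamma^\beta Y|+|\partial\Gamma^\beta Y|$ when $\beta$ is the high index, giving $\mathcal{X}^{1/2}_{\kappa-3}$ or $\mathcal{E}^{1/2}_{\kappa-3}$; when $\beta$ is low one uses \eqref{K-S-3} on the $t|(\partial_t^2-\Delta)\Gamma^\beta Y|$ factor in $L^\infty$. A separate sub-step handles the region $r>2t$, where $t\le t-r$ is false but instead $r\sim r-t$ and $t\lesssim\langle t-r\rangle$ fails; there one uses the second piece $\int_{r>2t}t^2|\cdot|^2$ in the definition of $\mathcal{X}$, exactly as in \eqref{C-9}. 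Assembling all five pieces gives the right-hand side $\mathcal{E}_{\kappa-3}^{1/2}(\mathcal{X}_{\kappa-3}^{1/2}+\mathcal{E}_{\kappa-3}^{1/2}+\mathcal{F}_{\kappa-3}^{1/2})$.

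The second (higher-order, $|\alpha|\le\kappa-2$) inequality is proved the same way, but now when the high index reaches $|\alpha|$ itself one can only afford $\mathcal{E}_\kappa^{1/2}$ on that factor — consistent with $[\alpha/2]+5\le\kappa-3$ still holding so all the $L^\infty$ factors remain controlled by $\mathcal{E}_{\kappa-3}^{1/2}+\mathcal{F}_{\kappa-3}^{1/2}$ — and an absorption argument (moving the term with $\|\nabla Y\|_{L^\infty}\ll1$ or $(\mathcal{E}_{\kappa-3}+\mathcal{F}_{\kappa-3})^{1/2}\ll1$ to the left, exactly as in the proof of Lemma \ref{SN-1} and Lemma \ref{SNB-1}) removes the self-referential occurrences of $t\|(\partial_t^2-\Delta)\Gamma^\beta Y\|_{L^2}$ with $|\beta|=|\alpha|$. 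I expect the main obstacle to be the careful bookkeeping in the near-cone/far-cone split combined with the anisotropic $\delta$-powers: one must check that every time a $\partial_3$ falls on a factor that is estimated in $L^\infty$ via Lemma \ref{Sob2} or Lemma \ref{KS-A}, the accompanying $\delta^{1/2}$ matches the $\delta$-weight in $\mathcal{F}$ rather than $\mathcal{E}$, so that the $\mathcal{F}_{\kappa-3}$ terms genuinely appear (and no spurious negative power of $\delta$ is generated by the pressure via $\partial_3^2$). This is precisely the ``extra singular $\delta^{-1}$'' issue flagged in the introduction, and it is resolved here because Lemma \ref{SN-1} has already traded the troublesome $\partial^2$ on $p$ for first derivatives of $Y$, so no uncompensated $\delta^{-1}$ survives.
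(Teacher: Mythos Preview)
Your skeleton is right in spirit --- reduce to $t\Pi(\alpha,2)$ via Lemma \ref{SN-1}, then estimate each piece --- but your treatment of $\Pi_2$ and $\Pi_3$ has a genuine gap. You write that after the elliptic bounds these terms ``look like $t\,\delta^{h-\frac12}\||\nabla\partial\Gamma^\beta Y||\partial\Gamma^\gamma Y|\|_{L^2}$'' and then propose to put the low-order factor in $L^\infty$ via Lemma \ref{K-S-O} or \eqref{K-S-3}. Both of those inequalities control $L^\infty$ only on the region $r\le t/2$. In the near-cone region $r\sim t$ the best pointwise decay available from Lemma \ref{KS-A} is $t^{-1/2}$ (from \eqref{K-S-1}), so after multiplying by $t$ you are left with a $t^{1/2}$ growth that cannot be absorbed. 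Your ``separate sub-step'' for $r>2t$ does not address the intermediate zone $t/2\le r\le 2t$, which is precisely where the difficulty lives.

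The missing ingredient is the null structure. The expression inside $\Pi_2$ is not a generic quadratic form: one rewrites
\[
\partial_t\nabla\Gamma^\beta Y\cdot\partial_t\Gamma^\gamma Y - \partial_k\nabla\Gamma^\beta Y\cdot\partial_k\Gamma^\gamma Y
= \omega_k(\omega_k\partial_t+\partial_k)\nabla\Gamma^\beta Y\cdot\partial_t\Gamma^\gamma Y
- \partial_k\nabla\Gamma^\beta Y\cdot(\omega_k\partial_t+\partial_k)\Gamma^\gamma Y,
\]
so that every term carries a good derivative $(\omega_k\partial_t+\partial_k)$. The paper then introduces the cutoff $\varphi^t$ to localize to $r\gtrsim t$ and applies Lemma \ref{GoodDeri}, which gives $t|(\omega_k\partial_t+\partial_k)\partial Y|\lesssim |\partial Y|+|\partial ZY|+t|(\partial_t^2-\Delta)Y|$ precisely in that region. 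This is how the full factor of $t$ is absorbed near the cone; without it the estimate does not close. Your handling of $\Pi_1,\Pi_4,\Pi_5$ via \eqref{C-6} and induction on $|\alpha|$ is essentially what the paper does, and the final absorption argument you describe for the $|\alpha|\le\kappa-2$ case is correct.
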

\begin{proof}
In terms of Lemma \ref{SN-1}, we need to control $t\Pi(\alpha,2)$
\begin{eqnarray*}
\Pi(\alpha, 2) =&& \sum_{\beta + \gamma = \alpha,\ \beta \neq \alpha} \Pi_1
+\sum_{\beta + \gamma = \alpha,\ |\beta| \leq|\gamma|}\Pi_2
  +\sum_{\beta + \gamma = \alpha,\ |\beta| > |\gamma|}\Pi_3 \\\nonumber
&& +\sum_{\beta+\gamma+\iota=\alpha,\ \beta\neq \alpha} \Pi_4
+\sum_{\beta+\gamma+\iota=\alpha,\ |\beta|\geq |\alpha|/3}  \Pi_5,
\end{eqnarray*}
where $\Pi_1$, $\Pi_2$, $\Pi_3$, $\Pi_4$ and $\Pi_5$ are given in Lemma \ref{SN-1}.

We first take care of $\Pi_1$ $\Pi_2$ and $\Pi_3$.
Due to the similarity between $\Pi_2$ and $\Pi_3$, we only give the estimate details for
$\Pi_2$ but omit the ones for $\Pi_3$.
For the expression inside $\Pi_2$, they can be organized as follows
\begin{align*}
&(-\partial_j,\partial_i)  \cdot
\big[(1-\varphi^t)
\big((\partial_i, \partial_j)
\partial_t \Gamma^\beta Y^i \partial_t\Gamma^\gamma Y^j
- (\partial_i, \partial_j) \partial_k\Gamma^\beta Y^i \partial_k \Gamma^\gamma Y^j \big)\big]
\\[-4mm] \nonumber\\\nonumber
&\quad+(-\partial_j,\partial_i)  \cdot
\big[\varphi^t
\big((\partial_i, \partial_j)
\partial_t \Gamma^\beta Y^i \partial_t\Gamma^\gamma Y^j
- (\partial_i, \partial_j) \partial_k\Gamma^\beta Y^i \partial_k \Gamma^\gamma Y^j \big)\big]
\\[-4mm] \nonumber\\\nonumber
&=(-\partial_j,\partial_i)  \cdot
\big[(1-\varphi^t)
\big((\partial_i, \partial_j)
\partial_t \Gamma^\beta Y^i \partial_t\Gamma^\gamma Y^j
- (\partial_i, \partial_j) \partial_k\Gamma^\beta Y^i \partial_k \Gamma^\gamma Y^j \big)\big]
\\[-4mm] \nonumber\\\nonumber
&\quad+(-\partial_j,\partial_i)  \cdot
\big[\varphi^t (\omega_k(\omega_k\partial_t+\partial_k))(\partial_i, \partial_j)
\Gamma^\beta Y^i \partial_t\Gamma^\gamma Y^j \big] \\[-4mm] \nonumber\\\nonumber
&\quad-  (\partial_i, \partial_j)\cdot
\big[ \partial_k  \Gamma^\beta Y^i (-\partial_j,\partial_i)(\varphi^t (\omega_k\partial_t+\partial_k)\Gamma^\gamma Y^j )\big] ,
\end{align*}
where $\varphi^t$ is cutoff function introduced in Lemma \ref{K-S-O}.
Note that here, the good derivatives always contain an extra derivative. This gives us the so-called
strong null condition.
By the $L^2$ boundedness of Riesz transform on thin domain in Lemma \ref{OP1} and Lemma \ref{OP2},
The quadratic nonlinearity $\Pi_1$ and $\Pi_2$ can be bounded by
\begin{align*}
& \delta^{h-\frac12}\sum_{\beta + \gamma =\alpha,\ \beta\neq \alpha}
\big\| |(\partial_t^2-\Delta) \Gamma^\beta Y|\cdot |\nabla\Gamma^\gamma Y|
 \big\|_{L^2(\Omega_\delta)}\\ \nonumber
&+ \delta^{h-\frac12}\sum_{\beta + \gamma =\alpha,\ |\beta|\leq |\gamma|} \|
(1-\varphi^t) \big((\partial_i, \partial_j)
\partial_t \Gamma^\beta Y^i \partial_t\Gamma^\gamma Y^j
- (\partial_i, \partial_j) \partial_k\Gamma^\beta Y^i \partial_k \Gamma^\gamma Y^j \big) \|_{L^2(\Omega_\delta)}
\\ \nonumber
&+ \delta^{h-\frac12}\sum_{\beta + \gamma =\alpha,\ |\beta|\leq |\gamma|} \|
\varphi^t (\omega_k(\omega_k\partial_t+\partial_k))(\partial_i, \partial_j)
\Gamma^\beta Y^i \partial_t\Gamma^\gamma Y^j  \|_{L^2(\Omega_\delta)}
\\ \nonumber
&+ \delta^{h-\frac12}\sum_{\beta + \gamma =\alpha,\ |\beta|\leq |\gamma|}\|
 \partial_k  \Gamma^\beta Y^i (-\partial_j,\partial_i)(\varphi^t (\omega_k\partial_t+\partial_k)\Gamma^\gamma Y^j )\|_{L^2(\Omega_\delta)}\\ \nonumber
&\lesssim   \delta^{h-\frac12}\sum_{\beta + \gamma =\alpha,\ \beta\neq \alpha}\big\|
 |(\partial_t^2-\Delta) \Gamma^\beta Y|\cdot  |\nabla\Gamma^\gamma Y|
 \big\|_{L^2(\Omega_\delta)}\\ \nonumber
&\quad + \delta^{h-\frac12}\sum_{\beta + \gamma =\alpha,\ |\beta|\leq |\gamma|} \big\|
(1-\varphi^t)
|\nabla \partial \Gamma^\beta Y| \cdot |\partial\Gamma^\gamma Y| \big\|_{L^2(\Omega_\delta)}
\\ \nonumber
&\quad + \delta^{h-\frac12}\sum_{k} \sum_{\beta + \gamma =\alpha,\ |\beta|\leq |\gamma|} \big\|
\varphi^t |(\omega_k\partial_t+\partial_k)\nabla
\Gamma^\beta Y| \cdot|\partial_t\Gamma^\gamma Y| \big\|_{L^2(\Omega_\delta)}
\\ \nonumber
&\quad + \delta^{h-\frac12}\sum_{k}\sum_{\beta + \gamma =\alpha,\ |\beta|\leq |\gamma|} \big\|
 |\nabla \Gamma^\beta Y| \cdot \nabla(\varphi^t (\omega_k\partial_t+\partial_k)\Gamma^\gamma Y )\big\|_{L^2(\Omega_\delta)}.
\end{align*}
By Lemma \ref{GoodDeri}, the above are further estimated as follows.
\begin{align}\label{E-1}
& \quad  \delta^{h-\frac12}\sum_{\beta + \gamma =\alpha,\ |\gamma|\leq |\beta|} \|
 (\partial_t^2-\Delta) \Gamma^\beta Y  \|_{L^2(\Omega_\delta)}
 \| \partial\Gamma^\gamma Y\|_{L^\infty(\Omega_\delta)}\\ \nonumber
&\quad + \delta^{h-\frac12}\sum_{\beta + \gamma =\alpha,\ |\beta|\leq |\gamma|}
 \|(\partial_t^2-\Delta)\Gamma^\beta Y\|_{L^\infty(\Omega_\delta)}
\| \partial\Gamma^\gamma Y  \|_{L^2(\Omega_\delta)}
\\ \nonumber
&\quad + \delta^{h-\frac12}\sum_{\beta + \gamma =\alpha,\ |\beta|\leq |\gamma|}
\|(1-\varphi^t)\nabla \partial \Gamma^\beta Y \|_{L^\infty(\Omega_\delta)}
\cdot\|\partial\Gamma^\gamma Y\|_{L^2(\Omega_\delta)}
\\ \nonumber
&\quad + t^{-1}\delta^{h-\frac12}\sum_{k}\sum_{\beta + \gamma =\alpha,\ |\beta|\leq |\gamma|} \|
\varphi^t (|\nabla\Gamma^\beta Y| + |\nabla Z\Gamma^\beta Y|)
\|_{L^\infty(\Omega_\delta)} \cdot\|\partial\Gamma^\gamma Y\|_{L^2(\Omega_\delta)}
\\ \nonumber
&\quad + \delta^{h-\frac12}\sum_{\beta + \gamma =\alpha,\ |\beta|\leq |\gamma|} \big\|
 |\nabla \Gamma^\beta Y| \cdot \nabla\varphi^t \partial\Gamma^\gamma Y \big\|_{L^2(\Omega_\delta)}\\ \nonumber
&\quad + \delta^{h-\frac12}\sum_{k}\sum_{\beta + \gamma =\alpha,\ |\beta|\leq |\gamma|} \big\|
 |\nabla \Gamma^\beta Y| \cdot\varphi^t \nabla\omega_k\partial_t\Gamma^\gamma Y \|_{L^2(\Omega_\delta)}. \nonumber
\end{align}
Note the index $\alpha=(h,a),\ \beta=(l,b),\ \gamma=(m,c)$, $h=l+m$.
The first two lines of \eqref{E-1} are estimated by
\begin{align*}
& \quad \sum_{[\alpha/2] \leq|\beta|\leq |\alpha|}
\delta^{l-\frac12}\|
 (\partial_t^2-\Delta) \Gamma^\beta Y \|_{L^2(\Omega_\delta)}
 \mathcal{E}_{[\alpha/2]+4}^{\frac12} \\ \nonumber
&\quad + \sum_{|\beta| \leq [\alpha/2]+3}
\delta^{l-\frac12}\|(\partial_t^2-\Delta)\Gamma^\beta Y\|_{L^2(\Omega_\delta)}
\mathcal{E}_{|\alpha|+1}^{\frac12}.
\end{align*}
The third  line of \eqref{E-1} correspond to the the domain away from the light cone.
By Lemma \ref{KS-A}, they are bounded as follows
\begin{align*}
& \sum_{\beta + \gamma =\alpha,\ |\beta|\leq |\gamma|}
\delta^{l}\|(1-\varphi^t)\nabla \partial \Gamma^\beta Y \|_{L^\infty(\Omega_\delta)}
\cdot\delta^{m-\frac12}\|\partial\Gamma^\gamma Y\|_{L^2(\Omega_\delta)}
\\ \nonumber
&\lesssim
t^{-1} \sum_{\beta + \gamma =\alpha,\ |\beta|\leq |\gamma|}
(\mathcal{X}_{|\beta|+5}^{\frac12}
+\mathcal{E}_{|\beta|+4}^{\frac12}+\mathcal{F}_{|\beta|+4}^{\frac12})
\mathcal{E}_{|\gamma|+1}^{\frac12}\\\nonumber
&\lesssim t^{-1}
(\mathcal{X}_{[\alpha/2]+5}^{\frac12}
+ \mathcal{E}_{[\alpha/2]+4}^{\frac12}+\mathcal{F}_{[\alpha/2]+4}^{\frac12})
\mathcal{E}_{|\alpha|+1}^{\frac12}.
\end{align*}
The remaining three lines of \eqref{E-1} are directly bounded by
\begin{align*}
t^{-1} \mathcal{E}_{|\alpha|+1}^{\frac12}\mathcal{E}_{[\alpha/2]+5}^{\frac12}.
\end{align*}
We summarize the estimate for $\Pi_1$ and $\Pi_2$ to deduce that
\begin{align*}
&t\sum_{\beta + \gamma = \alpha,\ \beta \neq \alpha} \Pi_1
+t\sum_{\beta + \gamma = \alpha,\ |\beta| \leq|\gamma|}\Pi_2 \\
&\lesssim\sum_{[\alpha/2]\leq |\beta|\leq |\alpha|}
\delta^{l-\frac12}t\|
 (\partial_t^2-\Delta) \Gamma^\beta Y \|_{L^2(\Omega_\delta)}
\mathcal{E}_{[\alpha/2]+4}^{\frac12}\\ \nonumber
&\quad + \sum_{|\beta|\leq [\alpha/2]+3}
\delta^{l-\frac12}t\|(\partial_t^2-\Delta)\Gamma^\beta Y\|_{L^2(\Omega_\delta)}
\mathcal{E}_{|\alpha|+1}^{\frac12}
\\ \nonumber
&\quad
+\mathcal{E}_{|\alpha|+1}^{\frac12}
\big(\mathcal{X}_{[\alpha/2]+5}^{\frac12}
+\mathcal{E}_{[\alpha/2]+5}^{\frac12} +
\mathcal{F}_{[\alpha/2]+4}^{\frac12}\big).
\end{align*}

Next we deal with the cubic term $\Pi_4$ and $\Pi_5$.
Note that relation among the index: $\alpha=(h,a),\ \beta=(l,b),\ \gamma=(m,c)$, $\iota=(n,d)$, $h=l+m+n$.
By Lemma \ref{Sob2} and Lemma \ref{KS-A}, there holds
\begin{align}
& \sum_{\beta+\gamma+\iota=\alpha,\ \beta\neq \alpha} \Pi_4
 +\sum_{\beta+\gamma+\iota=\alpha,|\beta|\geq |\alpha|/3}  \Pi_5 \\\nonumber
&\lesssim \sum_{\beta+\gamma+\iota=\alpha,\ |\alpha|/2\leq|\beta|<|\alpha|}
 \delta^{l-\frac12}\|
(\partial_t^2 - \Delta) \Gamma^\beta Y\|_{L^2(\Omega_\delta)}
\delta^{m}\| \nabla\Gamma ^\gamma Y\|_{L^\infty(\Omega_\delta)}
\delta^{n}\|\nabla\Gamma ^\iota Y\|_{L^\infty(\Omega_\delta)}
\\ \nonumber
&+\sum_{\beta+\gamma+\iota=\alpha,\ |\gamma|\geq |\alpha|/2}
 \delta^{l}\|
(\partial_t^2 - \Delta) \Gamma^\beta Y\|_{L^\infty(\Omega_\delta)}
\delta^{m-\frac12}\| \nabla\Gamma ^\gamma Y\|_{L^2(\Omega_\delta)}
\delta^{n}\|\nabla\Gamma ^\iota Y\|_{L^\infty(\Omega_\delta)}
\\ \nonumber
&\quad + \sum_{\beta+\gamma+\iota=\alpha,\ |\beta|\geq |\alpha|/3}
\delta^{l-\frac12}\| \partial\Gamma^\beta Y\|_{L^2(\Omega_\delta)}
\delta^{m+n}\big\|
|\partial\nabla\Gamma ^\gamma Y|\cdot|\nabla\Gamma^\iota Y|
+|\nabla\Gamma ^\gamma Y|\cdot|\partial\nabla\Gamma^\iota Y|
\big\|_{L^\infty(\Omega_\delta)} \\ \nonumber
&\lesssim
\sum_{|\alpha|/2\leq |\beta|<|\alpha|}
 \delta^{l-\frac12} \|
(\partial_t^2 - \Delta) \Gamma^\beta Y\|_{L^2(\Omega_\delta)}
\mathcal{E}_{[\alpha/2]+4}
\\ \nonumber
&\quad+\sum_{|\beta|\leq [\alpha/2]+3}
 \delta^{l-\frac12}\|
(\partial_t^2 - \Delta) \Gamma^\beta Y\|_{L^2(\Omega_\delta)}
\mathcal{E}_{|\alpha|+1}^{\frac12} \mathcal{E}_{[\alpha/2]+4}^{\frac12}\\ \nonumber
&\quad+t^{-1}\mathcal{E}_{|\alpha|+1}^{\frac12} (\mathcal{E}_{[\alpha/2]+4}^{\frac12}
+ \mathcal{X}_{[\alpha/2]+4}^{\frac12}
+ \mathcal{F}_{[\alpha/2]+4}^{\frac12} ) \mathcal{E}_{[\alpha/2]+4}^{\frac12} .
\end{align}
Finally we summarize that
\begin{align*}\nonumber
&  t\delta^{(h-\frac12)}\|\nabla\Gamma^{\alpha} p\|_{L^2(\Omega_\delta)}
+ t\delta^{(k-\frac12)} \|(\partial_t^2 - \Delta)\Gamma^{\alpha} Y\|_{L^2(\Omega_\delta)}
 \\[-4mm] \\ \nonumber
&\lesssim
t\sum_{|\alpha|/2\leq |\beta|<|\alpha|}
 \delta^{l-\frac12} \|
(\partial_t^2 - \Delta) \Gamma^\beta Y\|_{L^2(\Omega_\delta)}
\big(\mathcal{E}_{[\alpha/2]+4}+ \mathcal{E}_{[\alpha/2]+4}^{\frac12}\big)
\\ \nonumber
&\quad+t\sum_{|\beta|\leq [\alpha/2]+3}
 \delta^{l-\frac12}\|
(\partial_t^2 - \Delta) \Gamma^\beta Y\|_{L^2(\Omega_\delta)}
\mathcal{E}_{|\alpha|+1}^{\frac12} (\mathcal{E}_{[\alpha/2]+4}^{\frac12}+1)\\ \nonumber
&\quad+\mathcal{E}_{|\alpha|+1}^{\frac12}
\big(\mathcal{X}_{[\alpha/2]+5}^{\frac12}
+\mathcal{E}_{[\alpha/2]+5}^{\frac12} +
\mathcal{F}_{[\alpha/2]+5}^{\frac12}\big)
\big(\mathcal{E}_{[\alpha/2]+5}^{\frac12}+1\big).
\end{align*}

For $\kappa\geq 15$, $|\alpha|\leq \kappa-5$, one has $[\alpha/2]+5\leq \kappa-3$.
Hence we have
\begin{align*}
&t\sum_{|\alpha|\leq \kappa-5}
\big(\delta^{(h-\frac12)}\|\nabla\Gamma^{\alpha} p\|_{L^2(\Omega_\delta)}
+ \delta^{(k-\frac12)} \|(\partial_t^2 - \Delta)\Gamma^{\alpha} Y\|_{L^2(\Omega_\delta)}\big) \\ \nonumber
&\lesssim
t\sum_{|\alpha|\leq \kappa-5}\delta^{(h-\frac12)} \|(\partial_t^2 - \Delta)\Gamma^{\alpha} Y\|_{L^2(\Omega_\delta)}
\mathcal{E}_{ \kappa-3}^{\frac{1}{2}}
+\mathcal{E}_{ \kappa-3}^{\frac{1}{2}}
\big(\mathcal{X}_{\kappa-3}^{\frac{1}{2}} +\mathcal{E}_{\kappa-3}^{\frac{1}{2}}
+ \mathcal{F}_{\kappa-3}^{\frac{1}{2}}\big).
\end{align*}
Due the assumption  $\mathcal{E}_{ \kappa-3}\ll 1$,
the first term will absorbed by the left hand. This yields the first inequality of the lemma.

Next, for $\kappa\geq 15$, $|\alpha|\leq \kappa-2$, one has $[\alpha/2]+5\leq \kappa-3$.
Hence
\begin{align*}
&t\sum_{|\alpha|\leq \kappa-2}
\big(\delta^{(h-\frac12)}\|\nabla\Gamma^{\alpha} p\|_{L^2(\Omega_\delta)}
+ \delta^{(h-\frac12)} \|(\partial_t^2 - \Delta)\Gamma^{\alpha} Y\|_{L^2(\Omega_\delta)}\big) \\ \nonumber
&\lesssim
t\sum_{|\alpha|\leq \kappa-2}\delta^{(h-\frac12)} \|(\partial_t^2 - \Delta)\Gamma^{\alpha} Y\|_{L^2(\Omega_\delta)}
\mathcal{E}_{ \kappa-3}^{\frac{1}{2}}
+t\sum_{|\alpha|\leq \kappa-5}\delta^{(h-\frac12)} \|(\partial_t^2 - \Delta)\Gamma^{\alpha} Y\|_{L^2(\Omega_\delta)}
\mathcal{E}_{ \kappa}^{\frac{1}{2}} \\
&\quad
+\mathcal{E}_{ \kappa }^{\frac{1}{2}}
\big(\mathcal{X}_{\kappa-3}^{\frac{1}{2}} +\mathcal{E}_{\kappa-3}^{\frac{1}{2}}
+ \mathcal{F}_{\kappa-3}^{\frac{1}{2}}\big)
+\mathcal{X}_{ \kappa }^{\frac{1}{2}}\mathcal{E}_{\kappa-3}^{\frac{1}{2}}.
\end{align*}
Due the assumption that $\mathcal{E}_{ \kappa-3}\ll 1$,
the first term will absorbed by the left hand. Then combined with the
first inequality of the lemma will yields the second inequality.
\end{proof}

We are ready to state the following lemma
that the weighted $L^2$ norm will be bounded by the
weighted energy under some smallness assumptions.
\begin{lem}\label{WE-2}
Let $\kappa \geq 15$. There exists $0<\eta\ll 1$ such that
if $\mathcal{E}_{\kappa-3}+ \mathcal{F}_{\kappa-3} \leq \eta$, then there holds
\begin{equation}\nonumber
\mathcal{X}_{\kappa-3} \lesssim \mathcal{E}_{\kappa - 3}+\mathcal{F}_{\kappa - 3} ,\quad
\mathcal{X}_{\kappa} \lesssim \mathcal{E}_{\kappa}.
\end{equation}
\end{lem}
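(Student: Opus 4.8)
The plan is to combine the preliminary weighted $L^2$ estimate in Lemma \ref{WE-1}, applied at the level of $\Gamma^\alpha Y$, with the elliptic/source estimates in Lemmas \ref{SNB-1} and \ref{SN-3}. First I would apply Lemma \ref{WE-1} to $\Gamma^\alpha Y$ in place of $Y$ (this is legitimate since $\Gamma^\alpha Y$ also satisfies periodic boundary conditions and $Z\Gamma^\alpha Y$ is again of the form $\Gamma^\beta Y$ with $|\beta|\le|\alpha|+1$), obtaining
\begin{equation}\nonumber
\delta^{2(h-\frac12)}\Big(\int_{\Omega_\delta}(t-r)^2|\partial^2\Gamma^\alpha Y|^2\,dy+\int_{r>2t}t^2|\partial^2\Gamma^\alpha Y|^2\,dy\Big)
\lesssim \delta^{2(h-\frac12)}\big(\|\partial\Gamma^\alpha Y\|_{L^2}^2+\|\partial Z\Gamma^\alpha Y\|_{L^2}^2+t^2\|(\partial_t^2-\Delta)\Gamma^\alpha Y\|_{L^2}^2\big).
\end{equation}
Summing over $|\alpha|\le\kappa-5$ and using $\delta^{2(h-\frac12)}$ weights correctly (note $Z\Gamma^\alpha Y$ carries the same $h$), the first two terms on the right are controlled by $\mathcal{E}_{\kappa-3}$ since $|\alpha|+1\le\kappa-4\le\kappa-2$; this handles the $t$-independent part of $\mathcal{X}_{\kappa-3}$ directly (and similarly, summing over $|\alpha|\le\kappa-2$, the corresponding part of $\mathcal{X}_\kappa$ is bounded by $\mathcal{E}_\kappa$).

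The crux is the third term, $t^2\|(\partial_t^2-\Delta)\Gamma^\alpha Y\|_{L^2}^2$. For this I would split into $t\le 1$ and $t\ge 1$. For $t\le 1$ the factor $t^2$ is harmless and one uses Lemma \ref{SNB-1}: summing $\delta^{2(h-\frac12)}\|(\partial_t^2-\Delta)\Gamma^\alpha Y\|_{L^2}^2$ over $|\alpha|\le\kappa-5\le\kappa-4$ gives a bound by $(\mathcal{E}_{\kappa-3}+\mathcal{F}_{\kappa-3})^2\lesssim\mathcal{E}_{\kappa-3}+\mathcal{F}_{\kappa-3}$ under the smallness assumption. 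For $t\ge 1$, I would invoke Lemma \ref{SN-3}, whose first inequality gives exactly
\begin{equation}\nonumber
t\sum_{|\alpha|\le\kappa-5}\delta^{h-\frac12}\|(\partial_t^2-\Delta)\Gamma^\alpha Y\|_{L^2}\lesssim \mathcal{E}_{\kappa-3}^{\frac12}\big(\mathcal{X}_{\kappa-3}^{\frac12}+\mathcal{E}_{\kappa-3}^{\frac12}+\mathcal{F}_{\kappa-3}^{\frac12}\big).
\end{equation}
Squaring, this contributes $\mathcal{E}_{\kappa-3}\big(\mathcal{X}_{\kappa-3}+\mathcal{E}_{\kappa-3}+\mathcal{F}_{\kappa-3}\big)$ to the right-hand side. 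Collecting all pieces,
\begin{equation}\nonumber
\mathcal{X}_{\kappa-3}\lesssim \mathcal{E}_{\kappa-3}+\mathcal{F}_{\kappa-3}+\mathcal{E}_{\kappa-3}\,\mathcal{X}_{\kappa-3},
\end{equation}
and since $\mathcal{E}_{\kappa-3}\le\eta\ll 1$ the last term is absorbed into the left, yielding the first assertion. The main obstacle — really the only delicate point — is this absorption step: one must know a priori that $\mathcal{X}_{\kappa-3}$ is finite (so that moving $\mathcal{E}_{\kappa-3}\mathcal{X}_{\kappa-3}$ to the left is justified), which follows from working with sufficiently regular approximate solutions / a continuity argument, exactly as in the whole-space theory of \cite{Lei16}.

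For the highest-order bound $\mathcal{X}_\kappa\lesssim\mathcal{E}_\kappa$, I would run the same argument but sum over $|\alpha|\le\kappa-2$, using the second inequality of Lemma \ref{SNB-1} and the second inequality of Lemma \ref{SN-3}, namely $t\sum_{|\alpha|\le\kappa-2}\delta^{h-\frac12}\|(\partial_t^2-\Delta)\Gamma^\alpha Y\|_{L^2}\lesssim\mathcal{E}_\kappa^{\frac12}(\mathcal{X}_{\kappa-3}^{\frac12}+\mathcal{E}_{\kappa-3}^{\frac12}+\mathcal{F}_{\kappa-3}^{\frac12})$. Squaring gives a contribution $\mathcal{E}_\kappa(\mathcal{X}_{\kappa-3}+\mathcal{E}_{\kappa-3}+\mathcal{F}_{\kappa-3})$; by the first part of the lemma already proved, $\mathcal{X}_{\kappa-3}\lesssim\mathcal{E}_{\kappa-3}+\mathcal{F}_{\kappa-3}\lesssim\eta$, so this is $\lesssim\eta\,\mathcal{E}_\kappa$, which together with the $t$-independent part $\lesssim\mathcal{E}_\kappa$ and the $t\le1$ part $\lesssim(\mathcal{E}_{\kappa-3}+\mathcal{F}_{\kappa-3})\mathcal{E}_\kappa^{1/2}\cdot\mathcal{E}_\kappa^{1/2}$ (via Lemma \ref{SNB-1}, second inequality, then Cauchy--Schwarz) gives $\mathcal{X}_\kappa\lesssim\mathcal{E}_\kappa$. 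Note that here no absorption into $\mathcal{X}_\kappa$ itself is needed, since the $\mathcal{X}$-norm appearing on the right in Lemma \ref{SN-3} is only $\mathcal{X}_{\kappa-3}$, already controlled — this is the structural reason the hierarchy closes.
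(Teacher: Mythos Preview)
Your proposal is correct and follows essentially the same route as the paper: apply Lemma \ref{WE-1} to $\Gamma^\alpha Y$, bound the source term $t\,\delta^{h-\frac12}\|(\partial_t^2-\Delta)\Gamma^\alpha Y\|_{L^2}$ via Lemma \ref{SN-3}, and absorb the resulting $\mathcal{E}_{\kappa-3}\mathcal{X}_{\kappa-3}$ term using the smallness hypothesis; then repeat at top order, where only $\mathcal{X}_{\kappa-3}$ appears on the right and is already controlled. The only difference is cosmetic: you split explicitly into $t\le 1$ (handled by Lemma \ref{SNB-1}) and $t\ge 1$ (handled by Lemma \ref{SN-3}), whereas the paper simply invokes Lemmas \ref{WE-1} and \ref{SN-3} in one line and leaves the small-time case implicit.
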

\begin{proof}
Applying Lemma \ref{WE-1} and Lemma \ref{SN-3}, one has
\begin{eqnarray}\nonumber
\mathcal{X}_{\kappa - 3} &\lesssim& \mathcal{E}_{\kappa -3} + t^2\sum_{|\alpha|\leq \kappa-5}\delta^{2h-1}
\|(\partial_t^2 -\Delta)\Gamma^{\alpha}Y\|^2_{L^2(\Omega_\delta)}\\\nonumber
&\lesssim& \mathcal{E}_{\kappa -3}
+ \mathcal{E}_{\kappa -3}\big(\mathcal{E}_{\kappa -3}
 + \mathcal{X}_{\kappa -3}+\mathcal{F}_{\kappa -3}\big).
\end{eqnarray}
This gives the first inequality of the lemma by noting the
assumption.

Next, applying Lemma \ref{WE-1} and Lemma \ref{SN-3} once more, one
has
\begin{eqnarray}\nonumber
\mathcal{X}_{\kappa} &\lesssim& \mathcal{E}_{
  \kappa}+ t^2\sum_{|\alpha|\leq \kappa-2}\delta^{2h-1}\|(\partial_t^2 -\Delta)\Gamma^{\alpha}Y\|^2_{L^2(\Omega_\delta)}\\\nonumber
&\lesssim& \mathcal{E}_{\kappa} +
  \mathcal{E}_{\kappa}\big(\mathcal{E}_{\kappa -3} + \mathcal{X}_{\kappa -3}
  +\mathcal{F}_{\kappa -3} \big) .
\end{eqnarray}
Then the second estimate of the lemma follows from the first one and
the assumption.
\end{proof}

Combining Lemma \ref{SN-3} and Lemma \ref{WE-2},
there holds the following lemma.
\begin{lem}\label{WE-3}
Let $\kappa \geq 15$, $t\geq 1$. There exists $0<\eta\ll 1$ such that
if $\mathcal{E}_{\kappa-3}+ \mathcal{F}_{\kappa-3} \leq \eta$, then there holds
\begin{equation}\nonumber
t\sum_{|\alpha|\leq \kappa-5}
\delta^{(h-\frac12)}\big(\|\nabla\Gamma^{\alpha} p\|_{L^2(\Omega_\delta)}
+  \|(\partial_t^2 - \Delta)\Gamma^{\alpha} Y\|_{L^2(\Omega_\delta)}\big)
\lesssim \mathcal{E}_{ \kappa - 3}^{\frac{1}{2}}
\big(\mathcal{E}_{\kappa - 3}^{\frac{1}{2}}
+ \mathcal{F}_{\kappa-3}^{\frac{1}{2}}\big)
\end{equation}
and
\begin{equation}\nonumber
t\sum_{|\alpha|\leq \kappa-2}
\delta^{(h-\frac12)} \big(\|\nabla\Gamma^{\alpha} p\|_{L^2(\Omega_\delta)} +
 \|(\partial_t^2 - \Delta)\Gamma^{\alpha} Y\|_{L^2(\Omega_\delta)}\big)
\lesssim \mathcal{E}_{\kappa}^{\frac{1}{2}}\big(
\mathcal{E}_{ \kappa-3}^{\frac{1}{2}} +
\mathcal{F}_{ \kappa-3}^{\frac{1}{2}}\big).
\end{equation}
\end{lem}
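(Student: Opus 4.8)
The statement follows by directly combining the two cited lemmas, so the plan is short. First I would choose $\eta>0$ no larger than the smallness thresholds appearing in Lemma~\ref{SN-3} and Lemma~\ref{WE-2}; then the single hypothesis $\mathcal{E}_{\kappa-3}+\mathcal{F}_{\kappa-3}\le\eta$ simultaneously activates both lemmas (in particular it guarantees $\|\nabla Y\|_{L^\infty(\Omega_\delta)}\ll1$ via Lemma~\ref{Sob2}, which is what Lemma~\ref{SN-1} and hence Lemma~\ref{SN-3} require). The hypothesis $t\ge1$ is common to Lemma~\ref{SN-3} and the present statement, so no extra restriction is introduced.

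Next I would invoke the first conclusion of Lemma~\ref{WE-2}, $\mathcal{X}_{\kappa-3}\lesssim\mathcal{E}_{\kappa-3}+\mathcal{F}_{\kappa-3}$, and combine it with the elementary inequality $\sqrt{a+b}\le\sqrt{a}+\sqrt{b}$ to get $\mathcal{X}_{\kappa-3}^{\frac12}\lesssim\mathcal{E}_{\kappa-3}^{\frac12}+\mathcal{F}_{\kappa-3}^{\frac12}$. Feeding this into the right-hand side of the first inequality of Lemma~\ref{SN-3} turns the factor $\mathcal{X}_{\kappa-3}^{\frac12}+\mathcal{E}_{\kappa-3}^{\frac12}+\mathcal{F}_{\kappa-3}^{\frac12}$ into one that is $\lesssim\mathcal{E}_{\kappa-3}^{\frac12}+\mathcal{F}_{\kappa-3}^{\frac12}$, which is exactly the first asserted bound. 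The right-hand side of the second inequality of Lemma~\ref{SN-3} likewise only involves $\mathcal{X}_{\kappa-3}^{\frac12}$ (not $\mathcal{X}_{\kappa}^{\frac12}$), so the same substitution gives $\lesssim\mathcal{E}_{\kappa}^{\frac12}\big(\mathcal{E}_{\kappa-3}^{\frac12}+\mathcal{F}_{\kappa-3}^{\frac12}\big)$, the second assertion.

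I do not expect any real obstacle here: all the analytic work is already carried by Lemma~\ref{SN-3} and Lemma~\ref{WE-2}, and the remaining step is purely algebraic. The only items worth recording explicitly are the choice of a common smallness constant $\eta$ and the passage to the square-root form of the $\mathcal{X}_{\kappa-3}$ bound; in particular, the second conclusion $\mathcal{X}_{\kappa}\lesssim\mathcal{E}_{\kappa}$ of Lemma~\ref{WE-2} is not even needed for the present statement.
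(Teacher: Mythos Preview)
Your proposal is correct and matches the paper's approach exactly: the paper's entire proof is the single sentence ``Combining Lemma~\ref{SN-3} and Lemma~\ref{WE-2}, there holds the following lemma,'' and you have simply spelled out the substitution $\mathcal{X}_{\kappa-3}^{1/2}\lesssim\mathcal{E}_{\kappa-3}^{1/2}+\mathcal{F}_{\kappa-3}^{1/2}$ in detail. Your observation that the second conclusion of Lemma~\ref{WE-2} is not needed here is also accurate.
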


\section{Second derivative estimate of pressure}\label{EPWTD}
This section is devoted to the estimate of $\|\partial_3\nabla\Gamma^{\alpha} p\|_{L^2(\Omega_\delta)}$.
We remark that a direct estimate of $\|\partial_3\nabla\Gamma^{\alpha} p\|_{L^2(\Omega_\delta)}$
is insufficient to close the energy estimate due to the appearance of $\partial_3^3 \Gamma^\alpha Y$.
Instead, we are going to estimate $\| \nabla^2\Gamma^{\alpha} p\|_{L^2(\Omega_\delta)}$.
\begin{lem}\label{Pr3}
Let $\kappa \geq 15$. There exists $0<\eta\ll 1$ such that
if $\mathcal{E}_{\kappa-3}+\mathcal{F}_{\kappa-3} \leq \eta$.
Then for $|\alpha|\leq \kappa-5$, there holds
\begin{equation}\nonumber
\delta^{(h-\frac12)} \|\nabla^2\Gamma^{\alpha} p\|_{L^2(\Omega_\delta)}
+ \delta^{(h-\frac12)}  \|\nabla(\partial_t^2 - \Delta)\Gamma^{\alpha} Y\|_{L^2(\Omega_\delta)}
\lesssim \sum_{|\alpha|\leq \kappa-5}\Phi(\alpha,2),
\end{equation}
where
\begin{align*}
\Phi(\alpha,2)=
&\delta^{(h-\frac12)}\sum_{\beta + \gamma = \alpha}\big\| |\nabla^2\Gamma^\beta Y|
 \cdot| (\partial_t^2 - \Delta)\Gamma^\gamma Y|\big\|_{L^2(\Omega_\delta)} \\\nonumber
& + \delta^{(h-\frac12)}\sum_{k}\sum_{\beta + \gamma =\alpha}\big\| |\nabla\partial \Gamma^\beta Y| \cdot |(\omega_k\partial_t+\partial_k)\nabla\Gamma^\gamma Y| \big\|_{L^2(\Omega_\delta)}   \\ \nonumber
&+\delta^{(h-\frac12)}\sum_{\beta+\gamma+\iota=\alpha} \big\|
|\nabla\partial \Gamma^\beta Y|  \cdot |\partial \nabla\times (\Gamma ^\gamma Y, \Gamma^\iota Y)|
\big\|_{L^2(\Omega_\delta)}.
\end{align*}
\end{lem}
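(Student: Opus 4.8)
The plan is to mirror the proof of Lemma~\ref{SN-1}, carrying one extra spatial derivative throughout and exploiting the cancellation structure exactly at the point where the naive approach breaks down. Starting from \eqref{Elasticity-D}, I would apply the divergence operator and then $\nabla(-\Delta)^{-1}$ as in the proof of Lemma~\ref{SN-1}: this expresses $\nabla\Gamma^\alpha p$ through $\nabla(-\Delta)^{-1}\nabla\cdot[(\nabla\Gamma^\beta Y)^\top(\partial_t^2-\Delta)\Gamma^\gamma Y]$ with $\gamma\neq\alpha$, through $\nabla(-\Delta)^{-1}\nabla\cdot[(\nabla Y)^\top(\partial_t^2-\Delta)\Gamma^\alpha Y]$, and through $\nabla(-\Delta)^{-1}(J_1+J_2)$, where $J_1,J_2$ arise from the constraint \eqref{Struc-2}. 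Using the null-form rearrangement \eqref{D-6} for $J_1$ and the curl rearrangement \eqref{D-8}--\eqref{D-10} for $J_2$, one rewrites the pressure equation as $-\Delta\Gamma^\alpha p=\nabla\cdot\widetilde{G}$, where every term of $\widetilde{G}$ is a product of one second-order derivative of some $\Gamma^\beta Y$ (of the types $(\partial_t^2-\Delta)\Gamma^\beta Y$, $\nabla\partial\Gamma^\beta Y$, or $\partial\nabla\times(\Gamma^\gamma Y,\Gamma^\iota Y)$) with lower-order derivatives of the remaining factors, and where every genuinely quadratic piece either carries an explicit $\partial_t^2-\Delta$ or appears in a null combination $\partial_t A\,\partial_t B-\partial_k A\,\partial_k B$, which by $\partial_k=(\omega_k\partial_t+\partial_k)-\omega_k\partial_t$ turns into a sum of products each carrying a good derivative $\omega_k\partial_t+\partial_k$ on one factor.

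To control $\nabla^2\Gamma^\alpha p$ and $\nabla(\partial_t^2-\Delta)\Gamma^\alpha Y$, I would differentiate the elliptic identity once more, $-\Delta(\partial_m\Gamma^\alpha p)=\partial_a(\partial_m\widetilde{G}^a)$, and apply the thin-domain elliptic estimates Lemma~\ref{OP1} and Lemma~\ref{OP2} (to the horizontal and the vertical component of $\widetilde{G}$, respectively), which are dimensionless in $\delta$; this reduces the matter to bounding $\delta^{h-\frac12}\|\nabla\widetilde{G}\|_{L^2(\Omega_\delta)}$. By the Leibniz rule, $\nabla$ either falls on a lower-order factor, producing precisely the three families occurring in $\Phi(\alpha,2)$ (here the null and curl structure from the first step survives the differentiation, yielding the good-derivative factor $(\omega_k\partial_t+\partial_k)\nabla\Gamma^\gamma Y$ and the curl factor $\partial\nabla\times(\Gamma^\gamma Y,\Gamma^\iota Y)$), or it falls on the second-order factor. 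In the latter case, if that factor is $(\partial_t^2-\Delta)\Gamma^\alpha Y$ then --- and this is the crucial point --- the constraint structure forces its partner to be $\nabla Y$, so the term is bounded by $\|\nabla Y\|_{L^\infty(\Omega_\delta)}\,\delta^{h-\frac12}\|\nabla(\partial_t^2-\Delta)\Gamma^\alpha Y\|_{L^2(\Omega_\delta)}$ and, since $\|\nabla Y\|_{L^\infty(\Omega_\delta)}\ll1$, is absorbed by the left-hand side; if the factor is $(\partial_t^2-\Delta)\Gamma^\gamma Y$ with $|\gamma|<|\alpha|$, I would re-expand it via \eqref{Elasticity-D} as $-\nabla\Gamma^\gamma p$ plus lower-order quadratic terms, so that $\nabla(\partial_t^2-\Delta)\Gamma^\gamma Y$ becomes $-\nabla^2\Gamma^\gamma p$ plus terms of $\Phi$-type and of strictly lower order. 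The nonlinear factors multiplying these are placed in $L^\infty(\Omega_\delta)$ by Lemma~\ref{Sob2} and hence carry a gain $\lesssim(\mathcal{E}_{\kappa-3}+\mathcal{F}_{\kappa-3})^{\frac12}\le\eta^{\frac12}$; summing over all $|\alpha|\le\kappa-5$, both the lower-order $\nabla^2\Gamma^\gamma p$ contributions (all with $|\gamma|\le\kappa-5$) and the absorbed $\nabla(\partial_t^2-\Delta)\Gamma^\alpha Y$ contributions appear on the right with coefficient $\lesssim\eta^{\frac12}$ and can be moved to the left. Finally $\nabla(\partial_t^2-\Delta)\Gamma^\alpha Y$ itself is recovered from $-\nabla^2\Gamma^\alpha p$ and $\nabla[(\nabla\Gamma^\beta Y)^\top(\partial_t^2-\Delta)\Gamma^\gamma Y]$ through \eqref{Elasticity-D}, the latter handled by the same Leibniz and absorption argument.

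The main obstacle will be the one flagged in the remark preceding the statement: a naive differentiation of the momentum equation produces $\partial_3^3\Gamma^\alpha Y$, which in the thin domain carries an uncontrollable power of $\delta$ and is absent from both $\mathcal{E}_\kappa$ and $\mathcal{F}_\kappa$. The whole reason for estimating $\nabla^2\Gamma^\alpha p$ rather than $\partial_3\nabla\Gamma^\alpha p$, and for keeping the right-hand side of the pressure equation in divergence form, is that the only place where a third derivative can strike $(\partial_t^2-\Delta)\Gamma^\alpha Y$ is the term whose companion factor is $\nabla Y$, where it is absorbed. Making this rigorous requires careful bookkeeping --- through \eqref{D-6} and \eqref{D-8}--\eqref{D-10} --- of the companion factor of each second-order derivative and of the distribution of the weight $\delta^{h-\frac12}$ among the factors (it splits as $\delta^{l-\frac12}\delta^{m}\delta^{n}$ according to the $\partial_3$-counts of the sub-indices), so that every surviving term either lies in one of the three families defining $\Phi(\alpha,2)$ or is absorbable after summation. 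With $\eta$ chosen small enough that all terms carrying an $\eta^{\frac12}$ coefficient can be moved to the left, the estimate closes.
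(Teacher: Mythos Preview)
Your overall strategy is right, and the absorption argument at the end is exactly what the paper does. But there is one concrete inefficiency in your route that, in the thin domain, becomes a genuine gap.

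You write $-\Delta\Gamma^\alpha p=\nabla\cdot\widetilde G$ using the divergence-form rearrangements \eqref{D-6} and \eqref{D-8}--\eqref{D-10}, and then reduce to $\|\nabla\widetilde G\|_{L^2}$. For the null part of $\widetilde G$ coming from \eqref{D-6}, the factor structure is (second order)$\times$(first order), namely $\partial_t A\cdot\partial_t B-\partial_k A\cdot\partial_k B$ with $A=\nabla\Gamma^\beta Y$ and $B=\Gamma^\gamma Y$. When your extra $\nabla$ lands on the second-order factor $\partial A$ (your case analysis only covers the $(\partial_t^2-\Delta)$-type second-order factors, not this one), you are left with a null form $Q(\nabla A,B)$ pairing a \emph{second}-order quantity $\nabla A=\nabla^2\Gamma^\beta Y$ with a \emph{zeroth}-order quantity $B$; extracting the good derivative then produces terms of the shape $|\partial\nabla^2\Gamma^\beta Y|\cdot|(\omega_k\partial_t+\partial_k)\Gamma^\gamma Y|$, i.e.\ third order $\times$ first order. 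These are not of $\Phi$-type, and in the thin domain the $\partial_3^3\Gamma^\beta Y$ contribution carries the wrong power of $\delta$ (you are short by $\delta^{-2}$ relative to what $\mathcal E_{\kappa-3}$ or $\mathcal F_{\kappa-3}$ can supply), so it cannot be absorbed either.

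The paper sidesteps this entirely: instead of writing the constraint part in divergence form and differentiating, it uses the scalar Riesz bound $\|\nabla^2(-\Delta)^{-1}f\|_{L^2}\lesssim\|f\|_{L^2}$ to estimate that piece by $\|\nabla\cdot(\partial_t^2-\Delta)\Gamma^\alpha Y\|_{L^2}$ \emph{directly}. Computing this scalar via \eqref{Struc-2} gives exactly the expansion \eqref{F-3}, whose null terms are already of the form $Q(\nabla\Gamma^\beta Y,\nabla\Gamma^\gamma Y)$ (both factors first order), so the good-derivative trick lands you straight in $\Phi$ with no third derivatives at all. Only the momentum piece $(\nabla\Gamma^\beta Y)^\top(\partial_t^2-\Delta)\Gamma^\gamma Y$ needs the extra $\nabla$, and there the Leibniz rule produces only the two types you identified. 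Finally, your re-expansion of $\nabla(\partial_t^2-\Delta)\Gamma^\gamma Y$ for $|\gamma|<|\alpha|$ is an unnecessary detour: the paper simply bounds \emph{all} the terms $\|\nabla(\partial_t^2-\Delta)\Gamma^\beta Y\cdot\nabla\Gamma^\gamma Y\|$ (for every $\beta+\gamma=\alpha$) by $\mathcal E_{\kappa-3}^{1/2}\sum_{|\alpha|\le\kappa-5}\delta^{h-\frac12}\|\nabla(\partial_t^2-\Delta)\Gamma^\alpha Y\|$ and absorbs them after summing.
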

\begin{proof}
Applying the divergence operator to \eqref{Elasticity-D} and then
applying the operator $\nabla^2(- \Delta)^{-1}$, we obtain that
\begin{eqnarray}\nonumber
\nabla^2\Gamma^\alpha p &=& \sum_{\beta + \gamma = \alpha}
\nabla^2(- \Delta)^{-1}\nabla\cdot[C_\alpha^\beta
(\nabla\Gamma^\beta Y)^\top(\partial_t^2
   - \Delta)\Gamma^\gamma Y]\\\nonumber
&&+\ \nabla^2(- \Delta)^{-1}\nabla\cdot(\partial_t^2 -
  \Delta)\Gamma^\alpha Y.
\end{eqnarray}
Using the $L^2$ boundedness of the Riesz transform in thin domain in Lemma \ref{OP1} and Lemma \ref{OP2},
one has
\begin{align}\label{F-1}
&\|\nabla^2\Gamma^\alpha p\|_{L^2(\Omega_\delta)}\\[-4mm]\nonumber\\\nonumber
&\lesssim
 \sum_{\beta + \gamma = \alpha}
  \|\nabla\big[(\nabla\Gamma^\beta Y)^\top(\partial_t^2
   - \Delta)\Gamma^\gamma Y\big]\|_{L^2(\Omega_\delta)}
+ \|\nabla\cdot(\partial_t^2- \Delta)\Gamma^\alpha Y\|_{L^2(\Omega_\delta)} \\ \nonumber
&\lesssim
\sum_{\beta + \gamma = \alpha}\big\| |\nabla\Gamma^\beta Y|
 \cdot|\nabla(\partial_t^2 - \Delta)\Gamma^\gamma Y|\big\|_{L^2(\Omega_\delta)}
+ \sum_{\beta + \gamma = \alpha}\big\| |\nabla^2\Gamma^\beta Y|
 \cdot| (\partial_t^2 - \Delta)\Gamma^\gamma Y|\big\|_{L^2(\Omega_\delta)} \\\nonumber
&\quad +  \|\nabla\cdot(\partial_t^2- \Delta)\Gamma^\alpha Y\|_{L^2(\Omega_\delta)}.
\end{align}
It remains to compute the last line of \eqref{F-1}.
Using \eqref{Struc-2}, we have
\begin{eqnarray}\label{F-2}
&&\nabla\cdot (\partial_t^2
  - \Delta)\Gamma^\alpha Y \\\nonumber
&&= -\frac12 (\partial_t^2 - \Delta)\sum_{\beta + \gamma = \alpha}
  C_\alpha^\beta\big(\partial_i \Gamma^\beta  Y^i \partial_j\Gamma^\gamma  Y^j
  - \partial_i\Gamma^\gamma  Y^j \partial_j\Gamma^\beta  Y^i\big)\\\nonumber
&&\quad -(\partial_t^2 - \Delta)
\sum_{\beta+\gamma+\iota=\alpha}C_\alpha  ^\beta C_{\alpha-\beta}  ^\gamma
\left|
\begin{matrix}
 \partial_1\Gamma^\beta Y^1 &\partial_2 \Gamma^\beta Y^1 & \partial_3 \Gamma^\beta Y^1\\
 \partial_1\Gamma ^\gamma Y^2& \partial_2 \Gamma ^\gamma Y^2& \partial_3\Gamma ^\gamma Y^2\\
\partial_1\Gamma ^\iota Y^3&\partial_2\Gamma^\iota Y^3&\partial_3\Gamma^\iota Y^3
\end{matrix}
\right|.
\end{eqnarray}
We first compute the first line on the right hand side of \eqref{F-2}.
Note the structural identity
\begin{align*}
-\big(\partial_i\Gamma^\beta Y^i \partial_j\Gamma^\gamma Y^j -
  \partial_i\Gamma^\gamma Y^j\partial_j\Gamma^\beta Y^i\big)
=(\partial_i, \partial_j) \Gamma^\beta Y^i \cdot (-\partial_j,\partial_i)\Gamma^\gamma Y^j .
\end{align*}
We write
\begin{align}\label{F-3}
&\frac12 (\partial_t^2-\Delta)
\sum_{\beta + \gamma =\alpha} C_{\alpha}^\beta
(\partial_i, \partial_j) \Gamma^\beta Y^i \cdot (-\partial_j,\partial_i)\Gamma^\gamma Y^j \\ \nonumber
&=\frac12
\sum_{\beta + \gamma =\alpha} C_{\alpha}^\beta
(\partial_i, \partial_j) (\partial_t^2-\Delta) \Gamma^\beta Y^i \cdot (-\partial_j,\partial_i)\Gamma^\gamma Y^j \\ \nonumber
&\quad+\frac12
\sum_{\beta + \gamma =\alpha} C_{\alpha}^\beta
(\partial_i, \partial_j) \Gamma^\beta Y^i \cdot (-\partial_j,\partial_i) (\partial_t^2-\Delta)\Gamma^\gamma Y^j \\ \nonumber
&\quad+
\sum_{\beta + \gamma =\alpha} C_{\alpha}^\beta
(\partial_i, \partial_j) \partial_t \Gamma^\beta Y^i \cdot (-\partial_j,\partial_i)\partial_t\Gamma^\gamma Y^j \\ \nonumber
&\quad -
\sum_{\beta + \gamma =\alpha} C_{\alpha}^\beta
(\partial_i, \partial_j) \partial_k\Gamma^\beta Y^i \cdot (-\partial_j,\partial_i)\partial_k \Gamma^\gamma Y^j .
\end{align}
 The last two lines of \eqref{F-3} are organized as follows to show the strong null condition.
\begin{align*}
&\partial_t(\partial_i, \partial_j)
\Gamma^\beta Y^i \partial_t(-\partial_j, \partial_i)\Gamma^\gamma Y^j
- \partial_k(\partial_i, \partial_j)\Gamma^\beta Y^i \partial_k(-\partial_j, \partial_i) \Gamma^\gamma Y^j
\\[-4mm]\\
&=(\omega_k(\omega_k\partial_t+\partial_k))(\partial_i, \partial_j)
\Gamma^\beta Y^i \partial_t(-\partial_j, \partial_i) \Gamma^\gamma Y^j \\[-4mm]\\
&\quad- \partial_k(\partial_i, \partial_j) \Gamma^\beta Y^i (\omega_k\partial_t+\partial_k)(-\partial_j, \partial_i)\Gamma^\gamma Y^j.
\end{align*}
Consequently,
we have
\begin{align}\label{F-4}
&\|\frac12 (\partial_t^2-\Delta)
\sum_{\beta + \gamma =\alpha} C_{\alpha}^\beta
(\partial_i, \partial_j) \Gamma^\beta Y^i \cdot (-\partial_j,\partial_i)\Gamma^\gamma Y^j\|_{L^2(\Omega_\delta)} \\ \nonumber
&\lesssim \sum_{\beta + \gamma =\alpha}\big\|
|\nabla(\partial_t^2-\Delta) \Gamma^\beta Y| \cdot |\nabla\Gamma^\gamma Y| \big\|_{L^2(\Omega_\delta)}
\\ \nonumber
& \quad+\sum_{k} \sum_{\beta + \gamma =\alpha}\big\| |\nabla\partial \Gamma^\beta Y| \cdot |(\omega_k\partial_t+\partial_k)\nabla\Gamma^\gamma Y| \big\|_{L^2(\Omega_\delta)}.
\end{align}

Next, we compute the last line of \eqref{F-2}.
Using the curl operator introduced previously, we have
\begin{eqnarray}\label{F-6}
&&-(\partial_t^2 - \Delta)
\sum_{\beta+\gamma+\iota=\alpha}C_\alpha  ^\beta C_{\alpha-\beta}  ^\gamma
\left|
\begin{matrix}
 \partial_1\Gamma^\beta Y^1 &\partial_2 \Gamma^\beta Y^1 & \partial_3 \Gamma^\beta Y^1\\
 \partial_1\Gamma ^\gamma Y^2& \partial_2 \Gamma ^\gamma Y^2& \partial_3\Gamma ^\gamma Y^2\\
\partial_1\Gamma ^\iota Y^3&\partial_2\Gamma^\iota Y^3&\partial_3\Gamma^\iota Y^3
\end{matrix}
\right|
\\\nonumber
&&=-\sum_{\beta+\gamma+\iota=\alpha}C_\alpha  ^\beta C_{\alpha-\beta}  ^\gamma
\nabla (\partial_t^2 - \Delta) \Gamma^\beta Y^1 \cdot \nabla\times(\Gamma ^\gamma Y^2, \Gamma ^\iota Y^3)
\\ \nonumber
&&\quad -
\sum_{\beta+\gamma+\iota=\alpha}C_\alpha  ^\beta C_{\alpha-\beta}  ^\gamma
\nabla (\partial_t^2 - \Delta) \Gamma ^\gamma Y^2 \cdot \nabla\times( \Gamma ^\iota Y^3, \Gamma^\beta Y^1)
\\ \nonumber
&&\quad -\sum_{\beta+\gamma+\iota=\alpha}C_\alpha^\beta C_{\alpha-\beta}^\gamma
\nabla (\partial_t^2 - \Delta)\Gamma^\iota Y^3  \cdot \nabla\times
(\Gamma^\beta Y^1,\Gamma ^\gamma Y^2)\\ \nonumber
&&\quad -2\sum_{\beta+\gamma+\iota=\alpha}C_\alpha^\beta C_{\alpha-\beta}^\gamma
\nabla\partial_t\Gamma^\beta Y^1  \cdot \partial_t\nabla\times (\Gamma ^\gamma Y^2, \Gamma^\iota Y^3)\\ \nonumber
&&\quad +2\sum_{\beta+\gamma+\iota=\alpha}C_\alpha^\beta C_{\alpha-\beta}^\gamma
\nabla\partial_k\Gamma^\beta Y^1  \cdot \partial_k\nabla\times (\Gamma ^\gamma Y^2, \Gamma^\iota Y^3).
 \nonumber
\end{eqnarray}
Taking the $L^2$ norm \eqref{F-6}, we deduce that
\begin{align}\label{F-8}
&\Big\|(\partial_t^2 - \Delta)
\sum_{\beta+\gamma+\iota=\alpha}C_\alpha  ^\beta C_{\alpha-\beta}  ^\gamma
\left|
\begin{matrix}
 \partial_1\Gamma^\beta Y^1 &\partial_2 \Gamma^\beta Y^1 & \partial_3 \Gamma^\beta Y^1\\
 \partial_1\Gamma ^\gamma Y^2& \partial_2 \Gamma ^\gamma Y^2& \partial_3\Gamma ^\gamma Y^2\\
\partial_1\Gamma ^\iota Y^3&\partial_2\Gamma^\iota Y^3&\partial_3\Gamma^\iota Y^3
\end{matrix}
\right|\Big\|_{L^2(\Omega_\delta)}
\\\nonumber
&\lesssim \sum_{\beta+\gamma+\iota=\alpha}\big\|
|\nabla (\partial_t^2 - \Delta) \Gamma^\beta Y| \cdot |\nabla\times(\Gamma^\gamma Y, \Gamma^\iota Y)|
\big\|_{L^2(\Omega_\delta)}
\\ \nonumber
&+\sum_{\beta+\gamma+\iota=\alpha} \big\|
|\nabla\partial \Gamma^\beta Y|  \cdot |\partial \nabla\times (\Gamma ^\gamma Y, \Gamma^\iota Y)|
\big\|_{L^2(\Omega_\delta)}.\nonumber
\end{align}
Inserting \eqref{F-4} and \eqref{F-8} into \eqref{F-1},
 we obtain that
\begin{align}\nonumber
\|\nabla^2\Gamma^\alpha p\|_{L^2(\Omega_\delta)}
\lesssim &\sum_{\beta + \gamma =\alpha}\big\|
|\nabla(\partial_t^2-\Delta) \Gamma^\beta Y| \cdot |\nabla\Gamma^\gamma Y| \big\|_{L^2(\Omega_\delta)} \\\nonumber
&+ \sum_{\beta + \gamma = \alpha}\big\| |\nabla^2\Gamma^\beta Y|
 \cdot| (\partial_t^2 - \Delta)\Gamma^\gamma Y|\big\|_{L^2(\Omega_\delta)}\\ \nonumber
& + \sum_{k}\sum_{\beta + \gamma =\alpha}\big\| |\nabla\partial \Gamma^\beta Y| \cdot |(\omega_k\partial_t+\partial_k)\nabla\Gamma^\gamma Y| \big\|_{L^2(\Omega_\delta)}   \\ \nonumber
&+\sum_{\beta+\gamma+\iota=\alpha}\big\|
|\nabla (\partial_t^2 - \Delta) \Gamma^\beta Y| \cdot |\nabla\times(\Gamma^\gamma Y, \Gamma^\iota Y)|
\big\|_{L^2(\Omega_\delta)}
\\ \nonumber
&+\sum_{\beta+\gamma+\iota=\alpha} \big\|
|\nabla\partial \Gamma^\beta Y|  \cdot |\partial \nabla\times (\Gamma ^\gamma Y, \Gamma^\iota Y)|
\big\|_{L^2(\Omega_\delta)} .
\end{align}
Hence
\begin{align}\label{F-9}
&\delta^{h-\frac12}\|\nabla(\partial_t^2-\Delta)\Gamma^\alpha Y\|_{L^2(\Omega_\delta)}
+\delta^{h-\frac12}\|\nabla^2\Gamma^\alpha p\|_{L^2(\Omega_\delta)}
\\[-4mm]\nonumber\\ \nonumber
&\lesssim \Phi(\alpha,2)+ \sum_{\beta + \gamma =\alpha}
\delta^{h-\frac12}\big\| |\nabla(\partial_t^2-\Delta) \Gamma^\beta Y| \cdot |\nabla\Gamma^\gamma Y| \big\|_{L^2(\Omega_\delta)}  \\ \nonumber
&+\sum_{\beta+\gamma+\iota=\alpha}\delta^{h-\frac12}\big\|
|\nabla(\partial_t^2 - \Delta) \Gamma^\beta Y| \cdot |\nabla\times(\Gamma^\gamma Y, \Gamma^\iota Y)|
\big\|_{L^2(\Omega_\delta)}.
\end{align}
Recall $\kappa\geq 15$, $|\alpha|\leq \kappa-5$. By Lemma \ref{Sob2}, there holds
\begin{align}\label{F-10}
&\sum_{\beta + \gamma =\alpha}
\delta^{h-\frac12}\big\| |\nabla(\partial_t^2-\Delta) \Gamma^\beta Y| \cdot |\nabla\Gamma^\gamma Y| \big\|_{L^2(\Omega_\delta)}  \\ \nonumber
&\lesssim
\sum_{\beta + \gamma =\alpha,\ |\beta|\leq|\gamma|}
\delta^{l} \|\nabla(\partial_t^2-\Delta) \Gamma^\beta Y\|_{L^\infty(\Omega_\delta)}
\mathcal{E}_{|\gamma|+1}^{\frac 12}
 \\ \nonumber
&+\sum_{\beta + \gamma =\alpha,\ |\beta|>|\gamma|}
\delta^{l-\frac12} \| \nabla(\partial_t^2-\Delta) \Gamma^\beta Y\|_{L^2(\Omega_\delta)}
\mathcal{E}_{|\gamma|+4}^{\frac 12}  \\ \nonumber
&\lesssim \sum_{|\alpha|\leq \kappa-5}
\delta^{h-\frac12} \|\nabla(\partial_t^2-\Delta) \Gamma^\alpha Y\|_{L^2(\Omega_\delta)}
\mathcal{E}_{\kappa-3}^{\frac 12}.
\end{align}
Similarly,
\begin{align}\label{F-11}
&\sum_{\beta+\gamma+\iota=\alpha}\delta^{h-\frac12}\big\|
|\nabla(\partial_t^2 - \Delta) \Gamma^\beta Y| \cdot |\nabla\times(\Gamma^\gamma Y, \Gamma^\iota Y)|
\big\|_{L^2(\Omega_\delta)}
\\ \nonumber
&\lesssim \sum_{|\alpha|\leq \kappa-5}
\delta^{h-\frac12} \|\nabla(\partial_t^2-\Delta) \Gamma^\alpha Y\|_{L^2(\Omega_\delta)}
\mathcal{E}_{\kappa-3}.
\end{align}
Inserting \eqref{F-10} and \eqref{F-11} into \eqref{F-9}
 would imply that
\begin{align*}
&\sum_{|\alpha|\leq \kappa-5}
\big(\delta^{h-\frac12}\|\nabla (\partial_t^2-\Delta)\Gamma^\alpha Y\|_{L^2(\Omega_\delta)}
+\delta^{h-\frac12}\| \nabla^2\Gamma^\alpha p\|_{L^2(\Omega_\delta)}\big)
\\ \nonumber
&\lesssim \sum_{|\alpha|\leq \kappa-5}\Phi(\alpha,2)+
\sum_{|\alpha|\leq \kappa-5}
\delta^{h-\frac12} \|\nabla(\partial_t^2-\Delta) \Gamma^\alpha Y\|_{L^2(\Omega_\delta)}
(\mathcal{E}_{\kappa-3}^{\frac12}+\mathcal{E}_{\kappa-3}).
\end{align*}
Due to the assumption $\mathcal{E}_{\kappa-3}\ll 1$,
the last term is absorbed by the left hand side.
Thus the lemma is proved.
\end{proof}

\begin{lem}\label{SNB-2}
Let $\kappa \geq 15$. There exists $0< \eta\ll 1$ such that
if $\mathcal{E}_{\kappa-3} \leq \eta$. Then for $|\alpha|\leq \kappa-5$, there holds
\begin{equation}\nonumber
\sum_{|\alpha|\leq \kappa-5}
\big(\delta^{(h-\frac12)} \|\nabla\partial_3\Gamma^{\alpha} p\|_{L^2(\Omega_\delta)}
+ \delta^{(h-\frac12)}  \|(\partial_t^2 - \Delta)\partial_3\Gamma^{\alpha} Y\|_{L^2(\Omega_\delta)} \big)
\lesssim 
\mathcal{E}_{\kappa - 3} + \mathcal{F}_{\kappa -3}.
\end{equation}
\end{lem}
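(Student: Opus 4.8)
The plan is to reduce the statement, via Lemma~\ref{Pr3}, to a bound on $\sum_{|\alpha|\le\kappa-5}\Phi(\alpha,2)$, and then to estimate each of the three kinds of terms in $\Phi(\alpha,2)$ by H\"older's inequality together with the anisotropic Sobolev embedding Lemma~\ref{Sob2} and the first inequality of Lemma~\ref{SNB-1}.

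First I would observe the two pointwise inequalities $|\nabla\partial_3\Gamma^\alpha p|\le|\nabla^2\Gamma^\alpha p|$ and, since $(\partial_t^2-\Delta)\partial_3=\partial_3(\partial_t^2-\Delta)$, $|(\partial_t^2-\Delta)\partial_3\Gamma^\alpha Y|\le|\nabla(\partial_t^2-\Delta)\Gamma^\alpha Y|$. Hence the left-hand side of the lemma is dominated by $\sum_{|\alpha|\le\kappa-5}\delta^{h-\frac12}\big(\|\nabla^2\Gamma^\alpha p\|_{L^2(\Omega_\delta)}+\|\nabla(\partial_t^2-\Delta)\Gamma^\alpha Y\|_{L^2(\Omega_\delta)}\big)$, which by Lemma~\ref{Pr3} is $\lesssim\sum_{|\alpha|\le\kappa-5}\Phi(\alpha,2)$. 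So it remains to show $\sum_{|\alpha|\le\kappa-5}\Phi(\alpha,2)\lesssim\mathcal{E}_{\kappa-3}+\mathcal{F}_{\kappa-3}$.

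For each of the three kinds of terms in $\Phi(\alpha,2)$, exactly one factor carries the largest number of derivatives; I would put that factor in $L^2$ and the remaining factor(s) in $L^\infty$ via Lemma~\ref{Sob2}, which costs at most two horizontal derivatives plus possibly one $\partial_3$. The key point --- and the whole reason Lemma~\ref{Pr3} estimates $\nabla^2\Gamma^\alpha p$ rather than $\partial_3\nabla\Gamma^\alpha p$ --- is that a factor such as $\nabla^2\Gamma^\beta Y$ or $\nabla\partial\Gamma^\beta Y$ may contain $\partial_3^2\Gamma^\beta Y$, but with the weight $\delta^{h-\frac12}$ dictated by $\Phi$ this is precisely a term of $\mathcal{F}_{|\beta|+2}^{\frac12}$: the auxiliary energy $\mathcal{F}$ carries one extra $\partial_3$ with no extra power of $\delta$, so no singular $\delta^{-1}$ is produced and $\delta^{h-\frac12}\|\nabla^2\Gamma^\beta Y\|_{L^2(\Omega_\delta)}\lesssim\mathcal{E}_{|\beta|+2}^{\frac12}+\mathcal{F}_{|\beta|+2}^{\frac12}$, and likewise for the $L^\infty$ bounds after applying Lemma~\ref{Sob2}. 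For the factor $(\partial_t^2-\Delta)\Gamma^\gamma Y$ in the first kind of term I would invoke the first inequality of Lemma~\ref{SNB-1}, which controls $\sum_{|\gamma|\le\kappa-4}\delta^{h-\frac12}\|(\partial_t^2-\Delta)\Gamma^\gamma Y\|_{L^2(\Omega_\delta)}$ --- and, after Lemma~\ref{Sob2}, its $L^\infty$ counterpart --- by $\mathcal{E}_{\kappa-3}+\mathcal{F}_{\kappa-3}$. The good-derivative factor $(\omega_k\partial_t+\partial_k)\nabla\Gamma^\gamma Y$ in the second kind of term is here simply dominated pointwise by $|\partial\nabla\Gamma^\gamma Y|$ (the strong null structure carried by $\Phi$ is not needed for this $\delta$-uniform, non-decaying estimate), while the cubic curl term $\partial\nabla\times(\Gamma^\gamma Y,\Gamma^\iota Y)$ is expanded by the Leibniz rule into a sum of triple products $\nabla\partial\Gamma^\beta Y\cdot\nabla\partial\Gamma^\gamma Y\cdot\nabla\Gamma^\iota Y$, handled with the two lower-order factors in $L^\infty$. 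Since $|\alpha|\le\kappa-5$, the $L^2$ factor always sits in an energy of order $\le|\alpha|+2\le\kappa-3$, and, using $\kappa\ge15$, each $L^\infty$ factor --- even after the extra derivatives from Lemma~\ref{Sob2} --- sits in an energy of order $\le[\alpha/2]+5\le\kappa-3$ (resp. at most $|\alpha|/3+5\le\kappa-3$ for the cubic term). Thus every term of $\Phi(\alpha,2)$ is bounded either by $\big(\mathcal{E}_{\kappa-3}^{\frac12}+\mathcal{F}_{\kappa-3}^{\frac12}\big)^2$, by $\big(\mathcal{E}_{\kappa-3}^{\frac12}+\mathcal{F}_{\kappa-3}^{\frac12}\big)^3$, or by $\big(\mathcal{E}_{\kappa-3}^{\frac12}+\mathcal{F}_{\kappa-3}^{\frac12}\big)\big(\mathcal{E}_{\kappa-3}+\mathcal{F}_{\kappa-3}\big)$; since $\mathcal{E}_{\kappa-3}+\mathcal{F}_{\kappa-3}\le\eta\ll1$, each of these is $\lesssim\mathcal{E}_{\kappa-3}+\mathcal{F}_{\kappa-3}$, and summing over the finitely many $|\alpha|\le\kappa-5$ gives the lemma. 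Unlike in the proof of Lemma~\ref{Pr3}, no self-absorption step is needed here, since no top-order quantity from the left-hand side reappears on the right.

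The step I expect to be most delicate is precisely this bookkeeping of derivatives and powers of $\delta$: a direct estimate of $\partial_3\nabla\Gamma^\alpha p$ would produce $\partial_3^3\Gamma^\alpha Y$ carrying the weight $\delta^{h-\frac12}$, i.e. a factor $\delta^{-1}$ relative to its natural weight, which cannot be absorbed; replacing it by $\nabla^2\Gamma^\alpha p$ (Lemma~\ref{Pr3}) trades this for factors $\nabla^2\Gamma^\beta Y$ and $\nabla\partial\Gamma^\beta Y$ that never carry more than $\partial_3^2$ and therefore fit into $\mathcal{F}_{\kappa-3}$ with no $\delta$-loss --- this is exactly the ``freedom to move one derivative'' afforded by the cancellation structure of \eqref{Struc-2}. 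Verifying, in each bilinear and trilinear H\"older splitting, that every factor genuinely lands in $\mathcal{E}_{\kappa-3}$ or $\mathcal{F}_{\kappa-3}$ (and not in a would-be $\mathcal{F}_{\kappa-2}$ or in a norm with an uncompensated $\delta^{-1}$) is the real content of the argument, and is where the threshold $\kappa\ge15$ is used.
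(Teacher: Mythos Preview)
Your proposal is correct and follows essentially the same approach as the paper: reduce via Lemma~\ref{Pr3} to bounding $\sum_{|\alpha|\le\kappa-5}\Phi(\alpha,2)$, then split each factor by H\"older and Lemma~\ref{Sob2}. The only cosmetic difference is that the paper handles the $(\partial_t^2-\Delta)\Gamma^\gamma Y$ factor by the crude pointwise bound $|(\partial_t^2-\Delta)\Gamma^\gamma Y|\lesssim|\partial^2\Gamma^\gamma Y|$ rather than invoking Lemma~\ref{SNB-1}, and collapses the first two lines of $\Phi$ into a single bilinear form $|\nabla^2\Gamma^\beta Y|\cdot|\partial^2\Gamma^\gamma Y|$ before estimating; the resulting bound $(\mathcal{E}_{\kappa-3}+\mathcal{F}_{\kappa-3})(1+\mathcal{E}_{\kappa-3}^{1/2})$ then reduces under smallness exactly as yours does.
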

\begin{proof}
By Lemma \ref{Pr3}, we need to control $\Phi(\alpha,2)$
for $|\alpha|\leq \kappa-5$.
By Lemma \ref{Sob2}, one deduces that
\begin{align*}
\Phi(\alpha,2)
\lesssim
&\delta^{(h-\frac12)}\sum_{\beta + \gamma = \alpha}\big\| \nabla^2\Gamma^\beta Y|
 \cdot|\partial^2 \Gamma^\gamma Y|\big\|_{L^2(\Omega_\delta)} \\\nonumber
&+\delta^{(h-\frac12)}\sum_{\beta+\gamma+\iota=\alpha} \big\|
|\nabla\partial \Gamma^\beta Y|  \cdot |\partial \nabla\times (\Gamma ^\gamma Y, \Gamma^\iota Y)|
\big\|_{L^2(\Omega_\delta)}\\\nonumber
&\lesssim (\mathcal{E}_{\kappa-3}+\mathcal{F}_{\kappa-3})(1+\mathcal{E}_{\kappa-3}^{\frac 12}).
\end{align*}
Then the lemma follows from the assumption $\mathcal{E}_{\kappa-3} \leq \eta\ll 1$.
\end{proof}

Next we consider the case with $t$-factor.
\begin{lem}
Let $\kappa \geq 15$, $t\geq 1$. There exists $0<\eta\ll 1$ such that
if $\mathcal{E}_{\kappa - 3} \leq \eta$. Then for $|\alpha|\leq \kappa-5$, there holds
\begin{equation}\nonumber
t\sum_{|\alpha|\leq \kappa-5}
\big(\delta^{(h-\frac12)} \|\nabla\partial_3\Gamma^{\alpha} p\|_{L^2(\Omega_\delta)}
+ \delta^{(h-\frac12)}  \|(\partial_t^2 - \Delta)\partial_3\Gamma^{\alpha} Y\|_{L^2(\Omega_\delta)}\big)
\lesssim \mathcal{E}_{\kappa - 3}+ \mathcal{F}_{\kappa -3}.
\end{equation}
\end{lem}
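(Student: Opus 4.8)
The plan is to follow the scheme of the proof of Lemma~\ref{SN-3} almost verbatim, with the second--derivative structural estimate of Lemma~\ref{Pr3} playing the role that Lemma~\ref{SN-1} plays there. Since $\|\nabla\partial_3\Gamma^\alpha p\|_{L^2(\Omega_\delta)}\le\|\nabla^2\Gamma^\alpha p\|_{L^2(\Omega_\delta)}$ and $\|(\partial_t^2-\Delta)\partial_3\Gamma^\alpha Y\|_{L^2(\Omega_\delta)}\le\|\nabla(\partial_t^2-\Delta)\Gamma^\alpha Y\|_{L^2(\Omega_\delta)}$, multiplying the conclusion of Lemma~\ref{Pr3} by $t$ reduces the claim to the estimate $t\sum_{|\alpha|\le\kappa-5}\Phi(\alpha,2)\lesssim\mathcal{E}_{\kappa-3}+\mathcal{F}_{\kappa-3}$, together with the absorption, using $\mathcal{E}_{\kappa-3}\ll1$, of the term $t\sum_{|\alpha|\le\kappa-5}\delta^{h-\frac12}\|\nabla(\partial_t^2-\Delta)\Gamma^\alpha Y\|_{L^2(\Omega_\delta)}\,\mathcal{E}_{\kappa-3}^{\frac12}$ coming from the two residual terms of \eqref{F-9} (one bilinear, one cubic), handled exactly as in \eqref{F-10}--\eqref{F-11}: one recognizes $\|\nabla(\partial_t^2-\Delta)\Gamma^\beta Y\|_{L^2}$ as a left--hand side quantity and puts the lower--order factor $\nabla\Gamma^\gamma Y$ (or the curl factor) in $L^\infty$ by Lemma~\ref{Sob2}, Lemma~\ref{WE-3} and Lemma~\ref{KS-A}.

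It then remains to distribute the extra $t$ onto the three groups of terms defining $\Phi(\alpha,2)$. In the first group, $\delta^{h-\frac12}\| |\nabla^2\Gamma^\beta Y|\,|(\partial_t^2-\Delta)\Gamma^\gamma Y|\|_{L^2}$, the decay is supplied by the quasilinear factor: if $|\gamma|\le[\alpha/2]$ I would place $t(\partial_t^2-\Delta)\Gamma^\gamma Y$ in $L^\infty$ via Lemma~\ref{Sob2} and Lemma~\ref{WE-3}, keeping $\nabla^2\Gamma^\beta Y$ in $L^2$ (bounded by $\mathcal{E}_{\kappa-3}^{\frac12}+\mathcal{F}_{\kappa-3}^{\frac12}$, the second summand arising whenever a $\partial_3^2$ falls on $\Gamma^\beta Y$); if $|\gamma|>[\alpha/2]$ the roles are swapped, $t(\partial_t^2-\Delta)\Gamma^\gamma Y$ being controlled in $L^2$ directly by Lemma~\ref{WE-3} and $\nabla^2\Gamma^\beta Y$ in $L^\infty$. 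In the second group, which carries the good derivative $(\omega_k\partial_t+\partial_k)\nabla\Gamma^\gamma Y$, I would split with the cutoff $\varphi^t$ of Lemma~\ref{K-S-O}: on $\mathrm{supp}\,(1-\varphi^t)$ one has $t\lesssim (t-r)$, so that factor is absorbed by $\mathcal{X}_{\kappa-3}^{\frac12}\lesssim(\mathcal{E}_{\kappa-3}+\mathcal{F}_{\kappa-3})^{\frac12}$ via Lemma~\ref{WE-2} (no derivative is lost here because $\mathcal{X}_\kappa$ already carries two derivatives); on $\mathrm{supp}\,\varphi^t$, where $t\le 3r$, Lemma~\ref{GoodDeri} converts $t|(\omega_k\partial_t+\partial_k)\nabla\Gamma^\gamma Y|$ into $|\partial\Gamma^\gamma Y|+|\partial Z\Gamma^\gamma Y|+t|(\partial_t^2-\Delta)\Gamma^\gamma Y|$, all of order $\le|\gamma|+2\le\kappa-3$, with the last summand again handled by Lemma~\ref{WE-3}. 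For the cubic group one exploits the null structure of the curl operator $\nabla\times$ together with Lemma~\ref{GoodDeri} near the light cone and the weighted Sobolev inequalities of Lemma~\ref{KS-A} away from it, exactly as for $\Pi_4,\Pi_5$ in the proof of Lemma~\ref{SN-3}: two of the three factors are of order $\le[\alpha/3]$ and are put in $L^\infty$, which produces the factor $t^{-1}$ and leaves a product of norms $\lesssim(\mathcal{E}_{\kappa-3}+\mathcal{F}_{\kappa-3})^{\frac32}\le\mathcal{E}_{\kappa-3}+\mathcal{F}_{\kappa-3}$.

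Collecting everything, moving the absorbable term to the left and invoking $\mathcal{E}_{\kappa-3}\ll1$ completes the argument. The only real difficulty, as throughout the paper, is the joint bookkeeping of generalized--derivative order and powers of $\delta$: each use of Lemma~\ref{GoodDeri} spends one vector field, each passage to $L^\infty$ through Lemma~\ref{Sob2} spends two, and one must verify for every admissible splitting $\beta+\gamma=\alpha$ (resp. $\beta+\gamma+\iota=\alpha$) with $|\alpha|\le\kappa-5$ and $\kappa\ge15$ that the surviving highest--order factor stays within $\mathcal{E}_{\kappa-3}$; simultaneously, since the $\partial_3^2$ produced by differentiating the elliptic formula for $\nabla^2 p$ must not cost a power of $\delta$, every extra $\partial_3$ has to be routed into the auxiliary energy $\mathcal{F}_{\kappa-3}$ rather than $\mathcal{E}_{\kappa-3}$ at each step. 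Once these two constraints are seen to be compatible with the decay mechanisms above, the estimate closes as stated.
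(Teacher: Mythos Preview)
Your proposal is correct and follows essentially the same route as the paper: invoke Lemma~\ref{Pr3} to reduce to $t\sum_{|\alpha|\le\kappa-5}\Phi(\alpha,2)$, then treat the three groups by the same mechanisms used for $\Pi(\alpha,2)$ in Lemma~\ref{SN-3}, with Lemma~\ref{WE-3} supplying the $t^{-1}$ on the quasilinear factor, the good derivative handled via Lemma~\ref{GoodDeri} near the light cone and $\mathcal{X}_{\kappa-3}$ away from it, and Lemma~\ref{WE-2} closing the loop. One small remark: for the cubic group the paper does not actually use any null structure or Lemma~\ref{GoodDeri}; since there is an extra factor available, plain weighted Sobolev (Lemma~\ref{KS-A}) on two of the three factors already yields the required $t^{-1}$, so your treatment there can be simplified.
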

\begin{proof}
In view of Lemma \ref{Pr3}, we need to take care of $\Phi(\alpha,2)$ in Lemma \ref{Pr3}.
Recall $\Phi(\alpha,2)$
\begin{align*}
\Phi(\alpha,2)
=&\delta^{(h-\frac12)}\sum_{\beta + \gamma = \alpha}\big\| |\nabla^2\Gamma^\beta Y|
 \cdot| (\partial_t^2 - \Delta)\Gamma^\gamma Y|\big\|_{L^2(\Omega_\delta)} \\\nonumber
& + \delta^{(h-\frac12)}\sum_{k}\sum_{\beta + \gamma =\alpha}\big\| |\nabla\partial \Gamma^\beta Y| \cdot |(\omega_k\partial_t+\partial_k)\nabla\Gamma^\gamma Y| \big\|_{L^2(\Omega_\delta)}   \\ \nonumber
&+\delta^{(h-\frac12)}\sum_{\beta+\gamma+\iota=\alpha} \big\|
|\nabla\partial \Gamma^\beta Y|  \cdot |\partial \nabla\times (\Gamma ^\gamma Y, \Gamma^\iota Y)|
\big\|_{L^2(\Omega_\delta)}.
\end{align*}
The estimate of $\Phi(\alpha,2)$ is similar to $\Pi(\alpha,2)$ in the proof of Lemma \ref{SNB-1}.
Hence in the following, we will sketch the proof.

Recall the index notation $\alpha=(h,a),\ \beta=(l,b),\ \gamma=(m,c)$, $h=l+m$ and $|\alpha|\leq \kappa-5$.
By Lemma \ref{Sob2}, Lemma \ref{SN-3} and Lemma \ref{WE-2}, we compute
\begin{align*}
& \sum_{\beta + \gamma =\alpha,\ |\beta|\leq |\gamma|}
\delta^{h-\frac12}\big\| |\nabla\partial\Gamma^\beta Y|\cdot
|(\omega_k\partial_t+\partial_k)\nabla \Gamma^\gamma Y| \big\|_{L^2(\Omega_\delta)} + \\ \nonumber
& \sum_{\beta + \gamma =\alpha,\ |\beta|> |\gamma|}
\delta^{h-\frac12}\big\| |\nabla\partial\Gamma^\beta Y|\cdot
|(\omega_k\partial_t+\partial_k)\nabla \Gamma^\gamma Y| \big\|_{L^2(\Omega_\delta)}  \\ \nonumber
&\lesssim\sum_{\beta + \gamma =\alpha,\ |\beta|\leq |\gamma|}
\delta^{h-\frac12} \|\nabla\partial\Gamma^\beta Y\|_{L^\infty(\Omega_\delta)}
\|(\omega_k\partial_t+\partial_k)\nabla \Gamma^\gamma Y\|_{L^2(\Omega_\delta)} + \\ \nonumber
& \sum_{\beta + \gamma =\alpha,\ |\beta|> |\gamma|}
\delta^{h-\frac12} \| \nabla\partial\Gamma^\beta Y \|_{L^2(\Omega_\delta)}
\|(\omega_k\partial_t+\partial_k)\nabla \Gamma^\gamma Y \|_{L^\infty(\Omega_\delta)}  \\ \nonumber
&\lesssim t^{-1} (\mathcal{E}_{\kappa-3}+\mathcal{F}_{\kappa-3}).
\end{align*}
By Lemma \ref{Sob2} and Lemma \ref{WE-3}, we have
\begin{align*}
&\delta^{(h-\frac12)}\sum_{\beta + \gamma = \alpha}\big\| \nabla^2\Gamma^\beta Y|
 \cdot| (\partial_t^2 - \Delta)\Gamma^\gamma Y|\big\|_{L^2(\Omega_\delta)} \\\nonumber
&\lesssim t^{-1} (\mathcal{E}_{\kappa-3}+\mathcal{F}_{\kappa-3}).
\end{align*}
By Lemma \ref{Sob2}, Lemma \ref{KS-A} and Lemma \ref{WE-2}, there holds
\begin{align*}
&\delta^{(h-\frac12)}\sum_{\beta+\gamma+\iota=\alpha} \big\|
|\nabla\partial \Gamma^\beta Y|  \cdot |\partial \nabla\times (\Gamma ^\gamma Y, \Gamma^\iota Y)|
\big\|_{L^2(\Omega_\delta)} \\\nonumber
&\lesssim t^{-1} \mathcal{E}_{\kappa-3}^{\frac 12}(\mathcal{E}_{\kappa-3}+\mathcal{F}_{\kappa-3}).
\end{align*}
Collecting the argument and notice that $\mathcal{E}_{\kappa - 3} \ll 1$
will imply the lemma.
\end{proof}

\section{Higher-order energy estimate}\label{HEE}

This section is devoted to the estimate of the anisotropic higher-order generalized energy $\mathcal{E}_\kappa$.
The ghost weight method introduced by
Alinhac in \cite{Alinhac01a} plays an important role.

Let $\kappa \geq 15$, $|\alpha| \leq \kappa - 1$, $\sigma = t
- r$ and $q(\sigma) = \arctan\sigma$. Taking the $L^2$ inner product
of \eqref{Elasticity-D} with $\delta^{2(h-\frac 12)}e^{- q(\sigma)}\partial_t\Gamma^\alpha
Y$ and using integration by parts, we have
\begin{eqnarray}\nonumber
&&\frac{d}{dt}\int_{\Omega_\delta} \delta^{2(h-\frac 12)}e^{- q(\sigma)}\big(|\partial_t
  \Gamma^\alpha Y|^2 + |\nabla \Gamma^\alpha Y|^2\big)dy\\\nonumber
&&= - \delta^{2(h-\frac 12)}\sum_j\int_{\Omega_\delta} \frac{e^{- q(\sigma)}}{1 +
  \sigma^2}|(\omega_j\partial_t + \partial_j)
  \Gamma^\alpha Y|^2dy\\\nonumber
&&\quad -\ 2\delta^{2(h-\frac 12)}\int_{\Omega_\delta} e^{- q(\sigma)}\partial_t
  \Gamma^\alpha Y\cdot\nabla\Gamma^\alpha pdy\\\nonumber
&&\quad -\ 2\delta^{2(h-\frac 12)}\int_{\Omega_\delta} e^{- q(\sigma)}\partial_t
  \Gamma^\alpha Y\cdot\sum_{\beta + \gamma
  = \alpha}C_\alpha^\beta (\nabla\Gamma^\beta
  Y)^\top(\partial_t^2 - \Delta)\Gamma^\gamma Ydy,
\end{eqnarray}
which gives that
\begin{eqnarray}\label{ES-1}
&&\frac{d}{dt}\int_{\Omega_\delta} \delta^{2(h-\frac 12)} e^{- q(\sigma)}\big(|\partial_t
  \Gamma^\alpha Y|^2 + |\nabla \Gamma^\alpha Y|^2\big)dy\\\nonumber
&&\quad +\ \sum_j\int_{\Omega_\delta} \delta^{2(h-\frac 12)} \frac{e^{- q(\sigma)}}{1 +
  \sigma^2}|(\omega_j\partial_t + \partial_j)
  \Gamma^\alpha Y|^2dy\\\nonumber
&&\lesssim \delta^{h-\frac 12} \mathcal{E}_{\kappa}^{\frac{1}{2}}
  \Big(\|\nabla\Gamma^\alpha p\|_{L^2(\Omega_\delta)} + \sum_{\beta + \gamma
  = \alpha}\|(\nabla
  \Gamma^\beta Y)^{\top}(\partial_t^2 - \Delta)\Gamma^\gamma
  Y\|_{L^2(\Omega_\delta)}\Big).
\end{eqnarray}
Thus we are left to deal with the the last line of \eqref{ES-1}.
We will perform the estimate in two cases: $t\leq 1$ and $t\geq 1$.
In the first case, we should be careful of the double derivative $\partial_3^2$.
In the second case, we need to earn enough decay in time.

\textbf{Case of $t\leq 1:$}
In this case, we need to use the Sobolev imbedding in thin domain.
Since $\kappa\ge 15$, $|\alpha|\leq \kappa-1$. By Lemma \ref{SNB-1}, there holds
\begin{align*}
\delta^{h-\frac 12} \|\nabla\Gamma^\alpha p\|_{L^2(\Omega_\delta)}\lesssim \mathcal{E}_{\kappa}^{\frac12} (\mathcal{E}_{\kappa-3}^{\frac12}+\mathcal{F}_{\kappa-3}^{\frac12}).
\end{align*}
On the other hand, by Lemma \ref{Sob2} and Lemma \ref{SNB-1}, there holds
\begin{align*}
& \delta^{h-\frac 12}  \sum_{\beta + \gamma = \alpha}\|(\nabla
  \Gamma^\beta Y)^{\top}(\partial_t^2 - \Delta)\Gamma^\gamma
  Y\|_{L^2(\Omega_\delta)}\\
&\lesssim \delta^{h-\frac 12}  \sum_{\beta + \gamma
  = \alpha,\ |\beta|\geq |\gamma|}\|(\nabla
  \Gamma^\beta Y)^{\top}(\partial_t^2 - \Delta)\Gamma^\gamma Y\|_{L^2(\Omega_\delta)}\\\nonumber
&\quad  +\delta^{h-\frac 12}  \sum_{\beta + \gamma
  = \alpha,\ |\beta|\leq |\gamma|}\|(\nabla
  \Gamma^\beta Y)^{\top}(\partial_t^2 - \Delta)\Gamma^\gamma
  Y\|_{L^2(\Omega_\delta)}\\\nonumber
&\lesssim   \sum_{\ |\gamma|\leq  [\alpha/2]+3}
\mathcal{E}_{|\alpha|+1}^{\frac12}
  \delta^{m-\frac 12}\|(\partial_t^2 - \Delta)\Gamma^\gamma Y\|_{L^2(\Omega_\delta)}\\\nonumber
&\quad  + \sum_{|\gamma|\leq |\alpha|}
\mathcal{E}_{[\alpha/2]+4}^{\frac12}
  \delta^{m-\frac 12} \|(\partial_t^2 - \Delta)\Gamma^\gamma Y\|_{L^2(\Omega_\delta)} \\\nonumber
&\lesssim
\mathcal{E}_{\kappa}^{\frac12} (\mathcal{E}_{\kappa-3}+\mathcal{F}_{\kappa-3})
+\mathcal{E}_{\kappa}^{\frac12} (\mathcal{E}_{\kappa-3}^{\frac12}+\mathcal{F}_{\kappa-3}^{\frac12})
\lesssim
\mathcal{E}_{\kappa}^{\frac12} (\mathcal{E}_{\kappa-3}^{\frac12}+\mathcal{F}_{\kappa-3}^{\frac12}).
\end{align*}
\textbf{Case of $t\geq 1:$}
In this case, we need to use the ghost weight method and a refined organization of the pressure.

First of all, by Lemma \ref{Sob2} and
Lemma \ref{WE-3},
there holds
\begin{align}\label{K-2}
& \delta^{h-\frac 12}  \sum_{\beta + \gamma= \alpha,\ \gamma\neq \alpha}\|(\nabla
  \Gamma^\beta Y)^{\top}(\partial_t^2 - \Delta)\Gamma^\gamma
  Y\|_{L^2(\Omega_\delta)}\\\nonumber
&\lesssim   \sum_{\ |\gamma|\leq  [\alpha/2]+3}
\mathcal{E}_{|\alpha|+1}^{\frac12}
  \delta^{m-\frac 12}\|(\partial_t^2 - \Delta)\Gamma^\gamma Y\|_{L^2(\Omega_\delta)}\\\nonumber
&\quad  + \sum_{|\gamma|\leq |\alpha|-1}
\mathcal{E}_{[\alpha/2]+4}^{\frac12}
  \delta^{m-\frac 12} \|(\partial_t^2 - \Delta)\Gamma^\gamma Y\|_{L^2(\Omega_\delta)} \\\nonumber
&\lesssim t^{-1} \mathcal{E}_{\kappa}^{\frac 12}\mathcal{E}_{\kappa-3}^{\frac 12}
(\mathcal{E}_{\kappa-3}^{\frac 12}+\mathcal{F}_{\kappa-3}^{\frac 12}).
\end{align}
We are left to estimate
\begin{align}\label{K-1}
& \delta^{h-\frac 12} \|(\nabla Y)^{\top}(\partial_t^2-\Delta)\Gamma^\alpha Y\|_{L^2(\Omega_\delta)}
+\delta^{h-\frac 12} \|\nabla\Gamma^\alpha p\|_{L^2(\Omega_\delta)}\\[-4mm]\nonumber\\\nonumber
&\lesssim
\delta^{h-\frac 12} \|\nabla Y\|_{L^\infty(\Omega_\delta)} \|(\partial_t^2-\Delta)\Gamma^\alpha Y\|_{L^2(\Omega_\delta)}
+\delta^{h-\frac 12} \|\nabla\Gamma^\alpha p\|_{L^2(\Omega_\delta)}\\[-4mm]\nonumber\\\nonumber
&\lesssim
\delta^{h-\frac 12} \|(\partial_t^2-\Delta)\Gamma^\alpha Y\|_{L^2(\Omega_\delta)}
+\delta^{h-\frac 12} \|\nabla\Gamma^\alpha p\|_{L^2(\Omega_\delta)}.
\end{align}
In terms of Lemma \ref{SN-1}, the above \eqref{K-1} are bounded by
\begin{align*}
\delta^{h-\frac 12} \|(\partial_t^2-\Delta)\Gamma^\alpha Y\|_{L^2(\Omega_\delta)}
+\delta^{h-\frac 12} \|\nabla\Gamma^\alpha p\|_{L^2(\Omega_\delta)}
\lesssim \Pi(\alpha,2).
\end{align*}
Hence we need to take care of $\Pi_1$, $\Pi_2$, $\Pi_3$, $\Pi_4$ and $\Pi_5$
in Lemma \ref{SN-1}.
The estimate of $\Pi_1$ is already done in \eqref{K-2}
\begin{align}
\Pi_1 \lesssim t^{-1} \mathcal{E}_{\kappa}^{\frac 12}
(\mathcal{E}_{\kappa-3}^{\frac 12}+\mathcal{F}_{\kappa-3}^{\frac 12})
+t^{-\frac 32} \mathcal{E}_{\kappa}^{\frac 12}
(\mathcal{E}_{\kappa-3}^{\frac 12}+\mathcal{F}_{\kappa-3}^{\frac 12}).
\end{align}
The estimate of $\Pi_2$ and $\Pi_3$ are the same. We only present the estimate for the former one.
For the expression inside $\Pi_2$, they are organized as follows to show the null structure
\begin{align*}
&(\partial_i, \partial_j)
\partial_t \Gamma^\beta Y^i \partial_t\Gamma^\gamma Y^j
- (\partial_i, \partial_j) \partial_k\Gamma^\beta Y^i \partial_k \Gamma^\gamma Y^j\\
&=(\omega_k(\omega_k\partial_t+\partial_k))(\partial_i, \partial_j)
\Gamma^\beta Y^i \partial_t\Gamma^\gamma Y^j
- (\partial_i, \partial_j) \partial_k  \Gamma^\beta Y^i (\omega_k\partial_t+\partial_k)\Gamma^\gamma Y^j.
\end{align*}
Note that the null condition is present. Thus
\begin{eqnarray*}
&&\sum_{\beta + \gamma =\alpha,\ |\beta|\leq |\gamma|}\Pi_2  \\\nonumber
&&\lesssim \sum_{\beta + \gamma =\alpha,\ |\beta|\leq |\gamma|}
\delta^{h-\frac12}\| \nabla(- \Delta)^{-1} (-\partial_j,\partial_i)  \cdot
\big((\omega_k(\omega_k\partial_t+\partial_k))(\partial_i, \partial_j)
\Gamma^\beta Y^i \partial_t\Gamma^\gamma Y^j\big)\|_{L^2(\Omega_\delta)}  \\ \nonumber
&&\quad+\sum_{\beta + \gamma =\alpha, |\beta|\leq |\gamma|}
\delta^{h-\frac12}\| \nabla(- \Delta)^{-1} (-\partial_j,\partial_i)  \cdot
\big((\partial_i, \partial_j) \partial_k\Gamma^\beta Y^i (\omega_k\partial_t+\partial_k) \Gamma^\gamma Y^j \big)
\|_{L^2(\Omega_\delta)}  \\ \nonumber
&&\lesssim \delta^{h-\frac12}\sum_{k}\sum_{\beta + \gamma =\alpha,\ |\beta|\leq |\gamma|}
\big\|
|(\omega_k\partial_t+\partial_k)\nabla \Gamma^\beta Y| \cdot
|\partial_t\Gamma^\gamma Y| \big\|_{L^2(\Omega_\delta)}  \\ \nonumber
&&\quad +\delta^{h-\frac12}\sum_{k}\sum_{\beta + \gamma =\alpha,\ |\beta|\leq |\gamma|}
\big\| |\nabla^2\Gamma^\beta Y|\cdot |(\omega_k\partial_t+\partial_k) \Gamma^\gamma Y|
\big\|_{L^2(\Omega_\delta)}.
\end{eqnarray*}
Note the index $\alpha=(h,a),\ \beta=(l,b),\ \gamma=(m,c)$, $h=l+m$.
The above is further bounded by
\begin{align*}
& \delta^{h-\frac12}\sum_{\beta + \gamma =\alpha,\ |\beta|\leq |\gamma|}
\big\|
|(\omega_k\partial_t+\partial_k)\nabla \Gamma^\beta Y| \cdot
|\partial_t\Gamma^\gamma Y| \big\|_{L^2(\Omega_\delta)}  \\ \nonumber
&+\delta^{h-\frac12}\sum_{\beta + \gamma =\alpha,\ |\beta|\leq |\gamma|}
\big\| |\nabla^2\Gamma^\beta Y|\cdot |(\omega_k\partial_t+\partial_k) \Gamma^\gamma Y|
\big\|_{L^2(\Omega_\delta)}  \\ \nonumber
&\leq  \sum_{|\beta|\leq [\alpha/2]}
\delta^{l}\| \partial\nabla \Gamma^\beta Y\|_{L^\infty(\Omega_\delta(r\leq t/2))}
\mathcal{E}_{|\alpha|+1}^{\frac12}
 \\ \nonumber
&\quad+\sum_{|\beta|\leq [\alpha/2]}
\delta^{l}\| (\omega_k\partial_t+\partial_k)\nabla \Gamma^\beta Y\|_{L^\infty(\Omega_\delta(r\geq t/2))}
\mathcal{E}_{|\alpha|+1}^{\frac12}
 \\ \nonumber
&\quad+\sum_{\beta + \gamma =\alpha,\ |\beta|\leq |\gamma|}
\|\delta^{l}\nabla^2\Gamma^\beta Y\cdot
\delta^{m-\frac12}(\omega_k\partial_t+\partial_k) \Gamma^\gamma Y\|_{L^2(\Omega_\delta(r\geq t/2))}.
\end{align*}
For $|\beta|\leq [\alpha/2]$, by Lemma \ref{KS-A} and Lemma \ref{SN-3}, there holds
\begin{align*}
\delta^{l}\| \partial\nabla \Gamma^\beta Y\|_{L^\infty(\Omega_\delta(r\leq t/2))}
 \leq t^{-1} \mathcal{X}_{|\beta|+5}^{\frac12}
\lesssim t^{-1} (\mathcal{E}_{\kappa-3}^{\frac12}+\mathcal{F}_{\kappa-3}^{\frac12}) .
\end{align*}
By Lemma \ref{Sob2}, Lemma \ref{GoodDeri}, Lemma \ref{SN-1} and Lemma \ref{WE-2},
there holds
\begin{align*}
&\sum_{|\beta|\leq [\alpha/2]} \delta^{l}\| (\omega_k\partial_t+\partial_k)\nabla \Gamma^\beta Y\|_{L^\infty(\Omega_\delta(r\geq t/2))} \\
&\leq \sum_{|\beta|\leq [\alpha/2]+3} \delta^{l-\frac12}\| (\omega_k\partial_t+\partial_k)\nabla \Gamma^\beta Y\|_{L^2(\Omega_\delta(r\geq t/2))} \\
&\quad +t^{-1} \sum_{|\beta|\leq [\alpha/2]+2} \delta^{l-\frac12}\| \partial_t\nabla \Gamma^\beta Y\|_{L^2(\Omega_\delta(r\geq t/2))}\\
&\lesssim
t^{-1} \mathcal{E}_{\kappa-3}^{\frac12} .
\end{align*}
On the other hand,
if $r\geq t/2$, since $r\leq r_v+1$, $t\geq 1$, $\delta\leq 1$, then
\begin{align*}
t/2\leq r\leq r_v+1.
\end{align*}
By Lemma \ref{Sob2}, Lemma \ref{KS-A} and Lemma \ref{WE-2}, one has
\begin{align*}
&\sum_{|\beta|\leq [\alpha/2]}
\delta^{l}\| \langle t-r\rangle\nabla^2\Gamma^\beta Y\|_{L^\infty(\Omega_\delta(r\geq t/2))} \\\nonumber
&\lesssim \sum_{|\beta|\leq [\alpha/2]}
\delta^{l}\| (t-r)\nabla^2\Gamma^\beta Y\|_{L^\infty(\Omega_\delta(r\geq t/2))}
+\sum_{|\beta|\leq [\alpha/2]}
\delta^{l}\| \nabla^2\Gamma^\beta Y\|_{L^\infty(\Omega_\delta(r\geq t/2))}\\\nonumber
&\lesssim (r_v+1)^{-\frac12} (\mathcal{X}_{[\alpha/2]+5}^{\frac12}+\mathcal{E}_{[\alpha/2]+5}^{\frac12})\\
&\lesssim t^{-\frac12}(\mathcal{E}_{\kappa-3}^{\frac12}+\mathcal{F}_{\kappa-3}^{\frac12}).
\end{align*}

Hence we have,
\begin{align*}
&\sum_{\beta + \gamma =\alpha,\ |\beta|\leq |\gamma|}
\delta^{h-\frac12}\big\| |\nabla^2\Gamma^\beta Y|
\cdot |(\omega_k\partial_t+\partial_k) \Gamma^\gamma Y|\big \|_{L^2(r\geq t/2)}  \\\nonumber
&\lesssim\sum_{\beta + \gamma =\alpha,\ |\beta|\leq |\gamma|}
\delta^{l}\| \langle t-r\rangle\nabla^2\Gamma^\beta Y\|_{L^\infty(r\geq t/2)}
\delta^{m-\frac12}\big\|\frac{(\omega_k\partial_t+\partial_k) \Gamma^\gamma Y}
{\langle t-r\rangle}\big\|_{L^2(\Omega_\delta)} \\\nonumber
&\lesssim t^{-\frac12}(\mathcal{E}_{\kappa-3}^{\frac12}+\mathcal{F}_{\kappa-3}^{\frac12})
\cdot \sum_j\sum_{|\alpha|\leq \kappa-1}
\delta^{h-\frac12}\big\|\frac{(\omega_j\partial_t+\partial_j) \Gamma^\alpha Y}{
\langle t-r\rangle}\big\|_{L^2(\Omega_\delta)}.
\end{align*}

Next we estimate $\Pi_4$ and $\Pi_5$. As has been computed in Lemma \ref{SN-3},
there holds
\begin{align*}
\Pi_4+\Pi_5 &\lesssim
\sum_{|\alpha|/2\leq |\beta|<|\alpha|}
 \delta^{l-\frac12} \|
(\partial_t^2 - \Delta) \Gamma^\beta Y\|_{L^2(\Omega_\delta)}
\mathcal{E}_{[\alpha/2]+4}
\\ \nonumber
&\quad+\sum_{|\beta|\leq [\alpha/2]+3}
 \delta^{l-\frac12}\|
(\partial_t^2 - \Delta) \Gamma^\beta Y\|_{L^2(\Omega_\delta)}
\mathcal{E}_{|\alpha|+1}^{\frac12} \mathcal{E}_{[\alpha/2]+4}^{\frac12}\\ \nonumber
&\quad+t^{-1}\mathcal{E}_{|\alpha|+1}^{\frac12} (\mathcal{E}_{[\alpha/2]+4}^{\frac12}
+ \mathcal{X}_{[\alpha/2]+4}^{\frac12}
+\mathcal{F}_{[\alpha/2]+4}^{\frac12} ) \mathcal{E}_{[\alpha/2]+4}^{\frac12}.
\end{align*}
Since $\kappa\geq 15$, $|\alpha|\leq \kappa-1$.
By Lemma \ref{WE-2} and Lemma \ref{WE-3}, the above are further bounded by
\begin{align*}
\Pi_4+\Pi_5
&\lesssim  t^{-1} \mathcal{E}_{\kappa}^{\frac 12}
(\mathcal{E}_{\kappa-3}^{\frac 12}+\mathcal{F}_{\kappa-3}^{\frac 12}).
\end{align*}

Finally we collect all the estimate to obtain
\begin{align*}
&\frac{d}{dt}\sum_{|\alpha|\leq \kappa-1}\int_{\Omega_\delta} \delta^{2(h-\frac 12)} e^{- q(\sigma)}\big(|\partial_t
  \Gamma^\alpha Y|^2 + |\nabla \Gamma^\alpha Y|^2\big)dy\\\nonumber
&\quad +\ \sum_j\sum_{|\alpha|\leq \kappa-1}\int_{\Omega_\delta} \delta^{2(h-\frac 12)} \frac{e^{- q(\sigma)}}{1 +
  \sigma^2}|(\omega_j\partial_t + \partial_j)
  \Gamma^\alpha Y|^2dy\\\nonumber
&\lesssim \langle t\rangle^{-1} \mathcal{E}_{\kappa}
( \mathcal{E}_{\kappa-3}^{\frac12}+\mathcal{F}_{\kappa-3}^{\frac12} )\\\nonumber
&\quad + \langle t\rangle^{-\frac12} \mathcal{E}_{\kappa}^{\frac12}
(\mathcal{E}_{\kappa-3}^{\frac12}+\mathcal{F}_{\kappa-3}^{\frac12})
\sum_j\sum_{|\alpha|\leq \kappa-1}\delta^{(h-\frac 12)}
  \big\|\frac{(\omega_j\partial_t + \partial_j)\Gamma^\alpha Y}{\langle t-r\rangle}\big\|_{L^2(\Omega_\delta)}.
\end{align*}
This would finally gives
\begin{align} \label{ES-2}
&\frac{d}{dt}\sum_{|\alpha|\leq \kappa-1}\int_{\Omega_\delta} \delta^{2(h-\frac 12)} e^{- q(\sigma)}\big(|\partial_t
  \Gamma^\alpha Y|^2 + |\nabla \Gamma^\alpha Y|^2\big)dy\\\nonumber
&\quad +\ \sum_j\sum_{|\alpha|\leq \kappa-1}\int_{\Omega_\delta} \delta^{2(h-\frac 12)} \frac{e^{- q(\sigma)}}{1 +
  \sigma^2}|(\omega_j\partial_t + \partial_j)
  \Gamma^\alpha Y|^2dy\\\nonumber
&\lesssim \langle t\rangle^{-1} \mathcal{E}_{\kappa}
( \mathcal{E}_{\kappa-3}^{\frac12}+\mathcal{F}_{\kappa-3}^{\frac12} ).
\end{align}
This gives the first differential inequality at the end of Section
\ref{Equations}.

\section{Lower-order energy estimate}\label{LEE}
\subsection{Energy estimate for $\mathcal{E}_{\kappa-3}$}
Before the lower order energy estimate, we state  that the energy is decided by
its curl part.
\begin{lem}
Let $\kappa \geq 15$. There exists $0< \eta\ll 1$ such that
if $\mathcal{E}_{\kappa-3}+\mathcal{F}_{\kappa-3} \leq \eta$.
There holds
\begin{align*}
\mathcal{E}_{\kappa-3}\sim \sum_{|\alpha|\leq \kappa-4}
\delta^{2(h-\frac12)}\| \partial(-\Delta)^{-\frac12}\nabla\times \Gamma^\alpha Y\|_{L^2(\Omega_\delta)}^2.
\end{align*}
\end{lem}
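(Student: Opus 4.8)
The plan is to show the equivalence
\[
\mathcal{E}_{\kappa-3}\sim \sum_{|\alpha|\leq \kappa-4}
\delta^{2(h-\frac12)}\| \partial(-\Delta)^{-\frac12}\nabla\times \Gamma^\alpha Y\|_{L^2(\Omega_\delta)}^2
\]
by exploiting the Hodge-type decomposition of $\nabla\Gamma^\alpha Y$ into its divergence part and its curl part, and by using the constraint \eqref{Struc-2} to show that the divergence part is quadratically small and hence absorbable.

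First I would recall that for a vector field $V$ on $\Omega_\delta$ with periodic boundary conditions, one has the orthogonal decomposition $\nabla V = \nabla(-\Delta)^{-1}\nabla(\nabla\cdot V) + (\textrm{curl part})$, so that at the level of $L^2$ norms
\[
\|\partial\nabla\Gamma^\alpha Y\|_{L^2}\lesssim \|\partial(-\Delta)^{-1}\nabla(\nabla\cdot\Gamma^\alpha Y)\|_{L^2} + \|\partial(-\Delta)^{-\frac12}\nabla\times\Gamma^\alpha Y\|_{L^2},
\]
and conversely the curl part is trivially bounded by $\|\partial\nabla\Gamma^\alpha Y\|_{L^2}$ via the $L^2$-boundedness of the Riesz-type multipliers (Lemma \ref{OP1} and Lemma \ref{OP2}, applied after the anisotropic rescaling $x_3\mapsto \delta x_3$, which is exactly where the $\delta^{2(h-\frac12)}$ weights come from). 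So the only nontrivial direction is to control $\delta^{2(h-\frac12)}\|\partial(-\Delta)^{-1}\nabla(\nabla\cdot\Gamma^\alpha Y)\|_{L^2}^2$ by $\mathcal{E}_{\kappa-3}$ times a small factor, plus the curl part. For this I would plug in the constraint \eqref{Struc-2}: $\nabla\cdot\Gamma^\alpha Y$ is a sum of quadratic terms $C_\alpha^\beta(\partial_i\Gamma^\beta Y^i\partial_j\Gamma^\gamma Y^j - \partial_i\Gamma^\gamma Y^j\partial_j\Gamma^\beta Y^i)$ and cubic determinant terms with $\beta+\gamma=\alpha$ (resp. $\beta+\gamma+\iota=\alpha$). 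Using the null-type structural identity $-(\partial_i\Gamma^\beta Y^i\partial_j\Gamma^\gamma Y^j - \partial_i\Gamma^\gamma Y^j\partial_j\Gamma^\beta Y^i) = (\partial_i,\partial_j)\Gamma^\beta Y^i\cdot(-\partial_j,\partial_i)\Gamma^\gamma Y^j$, exactly as in \eqref{D-6}, one can move a derivative off of $(-\Delta)^{-1}\nabla(\cdots)$ onto the factors, so that $\partial(-\Delta)^{-1}\nabla(\nabla\cdot\Gamma^\alpha Y)$ becomes (up to bounded multipliers) a sum of products $\partial\Gamma^\beta Y\cdot\nabla\Gamma^\gamma Y$ and $\partial\Gamma^\beta Y\cdot\nabla\Gamma^\gamma Y\cdot\nabla\Gamma^\iota Y$. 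Then I would split each product by placing $L^\infty$ on the lower-order factor and $L^2$ on the top-order one, invoking the anisotropic Sobolev embedding Lemma \ref{Sob2}; since $\kappa\geq 15$ and $|\alpha|\leq\kappa-4$, the lower-order factor has order at most $[\alpha/2]+4\leq\kappa-3$ traditional/generalized derivatives, so its $L^\infty$ norm is controlled by $\mathcal{E}_{\kappa-3}^{1/2}+\mathcal{F}_{\kappa-3}^{1/2}\ll 1$, while the top-order factor contributes $\mathcal{E}_{\kappa-3}^{1/2}$. This yields
\[
\sum_{|\alpha|\leq\kappa-4}\delta^{2(h-\frac12)}\|\partial(-\Delta)^{-1}\nabla(\nabla\cdot\Gamma^\alpha Y)\|_{L^2}^2\lesssim (\mathcal{E}_{\kappa-3}+\mathcal{F}_{\kappa-3})\,\mathcal{E}_{\kappa-3},
\]
which is $\ll\mathcal{E}_{\kappa-3}$ under the smallness hypothesis, and therefore this term is absorbed.

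Combining the two directions: $\sum\delta^{2(h-\frac12)}\|\partial(-\Delta)^{-\frac12}\nabla\times\Gamma^\alpha Y\|_{L^2}^2\lesssim\sum\delta^{2(h-\frac12)}\|\partial\nabla\Gamma^\alpha Y\|_{L^2}^2 = \mathcal{E}_{\kappa-3}$ is immediate, and for the reverse inequality $\mathcal{E}_{\kappa-3}\lesssim\sum\delta^{2(h-\frac12)}\|\partial(-\Delta)^{-\frac12}\nabla\times\Gamma^\alpha Y\|_{L^2}^2 + (\mathcal{E}_{\kappa-3}+\mathcal{F}_{\kappa-3})\mathcal{E}_{\kappa-3}$, so choosing $\eta$ small enough to absorb the last term gives $\mathcal{E}_{\kappa-3}\lesssim\sum\delta^{2(h-\frac12)}\|\partial(-\Delta)^{-\frac12}\nabla\times\Gamma^\alpha Y\|_{L^2}^2$. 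One small subtlety I would be careful about is that $\mathcal{E}_{\kappa-3}$ as defined only involves $\|\partial\Gamma^\alpha Y\|_{L^2}$, not $\|\nabla\partial\Gamma^\alpha Y\|_{L^2}$; but since one always has $\partial_t^2-\Delta$ hitting $\Gamma^\alpha Y$ under control and the time derivative is part of $\partial$, the missing $\partial_t^2\Gamma^\alpha Y$ piece of $\Delta\Gamma^\alpha Y$ is handled through \eqref{Elasticity-D} and Lemma \ref{SNB-1}, so that $\sum\delta^{2(h-\frac12)}\|\partial\nabla\Gamma^\alpha Y\|_{L^2}^2$ and $\mathcal{E}_{\kappa-3}$ are indeed comparable modulo higher-order corrections. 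The main obstacle, and the step most worth writing out carefully, is the derivative-moving argument via \eqref{D-6}: one must verify that after applying $\nabla(-\Delta)^{-1}$ to the divergence of the quadratic/cubic nonlinearities, every resulting term genuinely has at most $|\alpha|+1$ derivatives distributed so that the product can be Sobolev-split with the low factor at order $\leq\kappa-3$ — this is precisely the bookkeeping already carried out for $\Pi(\alpha,2)$ in Lemma \ref{SN-1}, so I would cite that structure rather than redo it, and simply note that in the present (lower-order, no $t$-weight) setting the estimates are strictly easier.
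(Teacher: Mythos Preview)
Your approach is essentially the same as the paper's: use the Helmholtz/div-curl decomposition, show via the constraint \eqref{Struc-2} and the null-type structural identity that the divergence part is quadratically small, and absorb it under the smallness assumption. The derivative-moving argument you describe is precisely what the paper does (writing $\partial\nabla\cdot\Gamma^\alpha Y$ in divergence form so that $(-\Delta)^{-\frac12}$ yields a bounded multiplier), and your final bound $(\mathcal{E}_{\kappa-3}+\mathcal{F}_{\kappa-3})\mathcal{E}_{\kappa-3}$ is equivalent to the paper's $\mathcal{E}_{\kappa-3}^2+\mathcal{E}_{\kappa-3}^3$ for absorption purposes.

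The one unnecessary complication in your write-up is that you set up the decomposition for $\nabla\Gamma^\alpha Y$ rather than for $\Gamma^\alpha Y$ itself, which is why you end up with $\|\partial\nabla\Gamma^\alpha Y\|_{L^2}$ on the left and then have to invoke Lemma \ref{SNB-1} at the end to reconcile with $\mathcal{E}_{\kappa-3}$. The paper avoids this entirely by writing the identity directly as
\[
\delta^{2(h-\frac12)}\|\partial\Gamma^\alpha Y\|_{L^2}^2
=\delta^{2(h-\frac12)}\|\partial(-\Delta)^{-\frac12}\nabla\cdot\Gamma^\alpha Y\|_{L^2}^2
+\delta^{2(h-\frac12)}\|\partial(-\Delta)^{-\frac12}\nabla\times\Gamma^\alpha Y\|_{L^2}^2,
\]
which already has exactly $\mathcal{E}_{\kappa-3}$ on the left after summing in $|\alpha|\leq\kappa-4$; no extra derivative, no subtlety to fix. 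Also, citing $\Pi(\alpha,2)$ from Lemma \ref{SN-1} is slightly misdirected (that lemma concerns the pressure/wave operator, not the divergence piece here), though the underlying structural trick is indeed the same.
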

\begin{proof}
We begin with the following div-curl identity
\begin{align}\label{DC-1}
\delta^{2(h-\frac12)}\| \partial  \Gamma^\alpha Y\|_{L^2(\Omega_\delta)}^2
=\delta^{2(h-\frac12)}\| \partial(-\Delta)^{-\frac12}\nabla\cdot \Gamma^\alpha Y\|_{L^2(\Omega_\delta)}^2
+\delta^{2(h-\frac12)}\| \partial(-\Delta)^{-\frac12}\nabla\times \Gamma^\alpha Y\|_{L^2(\Omega_\delta)}^2.
\end{align}
Now let us calculate $\partial\nabla\cdot\Gamma^\alpha Y$
in the first term on the right hand side of \eqref{DC-1}.
By \eqref{Struc-2}, one has
\begin{align*}
\partial\nabla\cdot \Gamma^\alpha Y
 =& -\frac12 \sum_{\beta + \gamma =\alpha} C_{\alpha}^\beta
\partial\big(\partial_i\Gamma^\beta Y^i \partial_j\Gamma^\gamma Y^j -
  \partial_i\Gamma^\gamma Y^j\partial_j\Gamma^\beta Y^i\big)\\
  &-\sum_{\beta+\gamma+\iota=\alpha}C_\alpha  ^\beta C_{\alpha-\beta}  ^\gamma
 \partial \left|
\begin{matrix}
 \partial_1\Gamma^\beta Y^1 &\partial_2 \Gamma^\beta Y^1 & \partial_3 \Gamma^\beta Y^1\\
 \partial_1\Gamma ^\gamma Y^2& \partial_2 \Gamma ^\gamma Y^2& \partial_3\Gamma ^\gamma Y^2\\
\partial_1\Gamma ^\iota Y^3&\partial_2\Gamma^\iota Y^3&\partial_3\Gamma^\iota Y^3
\end{matrix} \right|.
\end{align*}
We compute
\begin{align*}
&-\partial\big(\partial_i\Gamma^\beta Y^i \partial_j\Gamma^\gamma Y^j -
  \partial_i\Gamma^\gamma Y^j\partial_j\Gamma^\beta Y^i\big) \\
&=\partial(\partial_i, \partial_j) \Gamma^\beta Y^i \cdot (-\partial_j,\partial_i)\Gamma^\gamma Y^j
+(\partial_i, \partial_j) \Gamma^\beta Y^i \cdot \partial(-\partial_j,\partial_i)\Gamma^\gamma Y^j \\
&=(\partial_i, \partial_j) \cdot\big(  \partial\Gamma^\beta Y^i (-\partial_j,\partial_i)\Gamma^\gamma Y^j\big)
+(-\partial_j,\partial_i) \cdot\big((\partial_i, \partial_j) \Gamma^\beta Y^i \partial\Gamma^\gamma Y^j\big),
\end{align*}
and
\begin{align*}
&\partial\left|
\begin{matrix}
 \partial_1\Gamma^\beta Y^1 &\partial_2 \Gamma^\beta Y^1 & \partial_3 \Gamma^\beta Y^1\\
 \partial_1\Gamma ^\gamma Y^2& \partial_2 \Gamma ^\gamma Y^2& \partial_3\Gamma ^\gamma Y^2\\
\partial_1\Gamma ^\iota Y^3&\partial_2\Gamma^\iota Y^3&\partial_3\Gamma^\iota Y^3
\end{matrix} \right| \\
&=\partial\big(\nabla \Gamma^\beta Y^1\cdot \nabla\times (\Gamma ^\gamma Y^2, \Gamma ^\iota Y^3) \big) \\
&=\partial\nabla \Gamma^\beta Y^1\cdot \nabla\times (\Gamma ^\gamma Y^2, \Gamma ^\iota Y^3)
+\nabla \Gamma^\beta Y^1\cdot \nabla\times (\partial\Gamma ^\gamma Y^2, \Gamma ^\iota Y^3)  \\
&\quad +\nabla \Gamma^\beta Y^1\cdot \nabla\times (\Gamma ^\gamma Y^2, \partial\Gamma ^\iota Y^3) \\
&=\nabla\cdot\big(  \partial \Gamma^\beta Y^1 \nabla\times (\Gamma ^\gamma Y^2, \Gamma ^\iota Y^3)\big)
-\nabla\cdot\big( \partial\Gamma ^\gamma Y^2\nabla\times (\Gamma^\beta Y^1, \Gamma ^\iota Y^3) \big) \\
&\quad +\nabla\cdot\big(\partial\Gamma ^\iota Y^3
\nabla\times (\Gamma^\beta Y^1,\Gamma ^\gamma Y^2 ) \big).
\end{align*}
Consequently, by Lemma \ref{Sob2}, one has
\begin{align*}
&\delta^{2(h-\frac12)} \| \partial(-\Delta)^{-\frac12}\nabla\cdot \Gamma^\alpha Y\|_{L^2(\Omega_\delta)}^2 \\
&\lesssim \delta^{2(h-\frac12)}\sum_{\beta + \gamma =\alpha}
\|(-\Delta)^{-\frac12}(\partial_i, \partial_j) \cdot\big(  \partial\Gamma^\beta Y^i (-\partial_j,\partial_i)\Gamma^\gamma Y^j\big) \|_{L^2(\Omega_\delta)}^2 \\
&\quad +\delta^{2(h-\frac12)}\sum_{\beta + \gamma =\alpha} \| (-\Delta)^{-\frac12}(-\partial_j,\partial_i) \cdot\big((\partial_i, \partial_j) \Gamma^\beta Y^i \partial\Gamma^\gamma Y^j\big)\|_{L^2(\Omega_\delta)}^2\\
&\quad +\delta^{2(h-\frac12)}\sum_{\beta+\gamma+\iota=\alpha} \| (-\Delta)^{-\frac12} \nabla\cdot\big(  \partial \Gamma^\beta Y^1 \nabla\times (\Gamma ^\gamma Y^2, \Gamma ^\iota Y^3)\big) \|_{L^2(\Omega_\delta)}^2\\
&\quad +\delta^{2(h-\frac12)}\sum_{\beta+\gamma+\iota=\alpha} \| (-\Delta)^{-\frac12}\nabla\cdot\big( \partial\Gamma ^\gamma Y^2\nabla\times (\Gamma^\beta Y^1, \Gamma ^\iota Y^3) \big) \|_{L^2(\Omega_\delta)}^2\\
&\quad +\delta^{2(h-\frac12)}\sum_{\beta+\gamma+\iota=\alpha} \| (-\Delta)^{-\frac12} \nabla\cdot\big(\partial\Gamma ^\iota Y^3
\nabla\times (\Gamma^\beta Y^1,\Gamma ^\gamma Y^2 ) \big)\|_{L^2(\Omega_\delta)}^2\\
&\lesssim \delta^{2(h-\frac12)}\sum_{\beta + \gamma =\alpha}
\big\|  |\partial\Gamma^\beta Y| |\nabla\Gamma^\gamma Y| \big\|_{L^2(\Omega_\delta)}^2
 +\delta^{2(h-\frac12)}\sum_{\beta+\gamma+\iota=\alpha}
\big\|  |\partial \Gamma^\beta Y| |\nabla\Gamma ^\gamma Y| |\nabla\Gamma ^\iota Y| \big\|_{L^2(\Omega_\delta)}^2\\
&\lesssim \mathcal{E}_{\kappa-3}^2+\mathcal{E}^3_{\kappa-3}.
\end{align*}
Inserting the above inequality into \eqref{DC-1}, we have
\begin{align*}
\delta^{2(h-\frac12)}\| \partial  \Gamma^\alpha Y\|_{L^2(\Omega_\delta)}^2
\lesssim \mathcal{E}_{\kappa-3}^2+\mathcal{E}^3_{\kappa-3}
+\delta^{2(h-\frac12)}\| \partial(-\Delta)^{-\frac12}\nabla\times \Gamma^\alpha Y\|_{L^2(\Omega_\delta)}^2.
\end{align*}
For $\kappa\geq 15$, summing over $|\alpha|\leq \kappa-4$ yields
\begin{align*}
\mathcal{E}_{\kappa-3}
\lesssim \mathcal{E}_{\kappa-3}^2+\mathcal{E}^3_{\kappa-3}
+\sum_{|\alpha|\leq \kappa-4} \delta^{2(h-\frac12)}\| \partial(-\Delta)^{-\frac12}\nabla\times \Gamma^\alpha Y\|_{L^2(\Omega_\delta)}^2.
\end{align*}
Recall the assumption that $\mathcal{E}_{\kappa-3}+\mathcal{F}_{\kappa-3} \leq \eta\ll 1$.
This yields
\begin{align*}
\mathcal{E}_{\kappa-3}
\lesssim \sum_{|\alpha|\leq \kappa-4} \delta^{2(h-\frac12)}\| \partial(-\Delta)^{-\frac12}\nabla\times \Gamma^\alpha Y\|_{L^2(\Omega_\delta)}^2.
\end{align*}
On the other hand, by Lemma \ref{OP1} and Lemma \ref{OP2}, it's easy to see the reverse bound also holds.
Thus the lemma is proved.
\end{proof}

Now we perform the lower order energy estimate.
Let $\kappa \geq 15$ and $|\alpha| \leq \kappa - 4$.
Applying $(-\Delta)^{-\frac12}\nabla\times $ onto \eqref{Elasticity-D} and then
taking the $L^2$ inner product of the resulting equations with $\delta^{2(h-\frac 12)}\partial_t(-\Delta)^{-\frac12}\nabla\times\Gamma^\alpha Y$ and using integration by parts, we have
\begin{eqnarray}\nonumber
&&\frac{d}{dt}\int_{\Omega_\delta} \delta^{2(h-\frac 12)}
\big(|\partial_t (-\Delta)^{-\frac12}\nabla\times \Gamma^\alpha Y|^2
+ |\nabla (-\Delta)^{-\frac12}\nabla\times \Gamma^\alpha Y|^2\big)dy\\\nonumber
&&=
-\ 2\delta^{2(h-\frac 12)}\int_{\Omega_\delta}\partial_t
  (-\Delta)^{-\frac12}\nabla\times \Gamma^\alpha Y
  \cdot(-\Delta)^{-\frac12}\nabla\times \sum_{\beta + \gamma  = \alpha}C_\alpha^\beta (\nabla\Gamma^\beta
  Y)^\top(\partial_t^2 - \Delta)\Gamma^\gamma Ydy\\\nonumber
&&\lesssim \delta^{2(h-\frac 12)}
\| \partial_t
  (-\Delta)^{-\frac12}\nabla\times \Gamma^\alpha Y \|_{L^2(\Omega_\delta)}
\sum_{\beta + \gamma  = \alpha}\| (-\Delta)^{-\frac12}\nabla\times(\nabla\Gamma^\beta
  Y)^\top(\partial_t^2 - \Delta)\Gamma^\gamma Y \|_{L^2(\Omega_\delta)} \\\nonumber
&&\lesssim \delta^{2(h-\frac 12)}
\| \partial_t \Gamma^\alpha Y \|_{L^2(\Omega_\delta)}
\sum_{\beta + \gamma  = \alpha}\|(\nabla\Gamma^\beta
  Y)^\top(\partial_t^2 - \Delta)\Gamma^\gamma Y \|_{L^2(\Omega_\delta)} \\\nonumber
&&\lesssim \delta^{(h-\frac 12)}\mathcal{E}_{\kappa-3}^{\frac12}
\sum_{\beta + \gamma  = \alpha}\|(\nabla\Gamma^\beta
  Y)^\top(\partial_t^2 - \Delta)\Gamma^\gamma Y \|_{L^2(\Omega_\delta)}.
\end{eqnarray}
If $t\leq 1$, by Lemma \ref{Sob2} and Lemma \ref{SNB-1}, there holds
\begin{align*}
& \delta^{h-\frac 12}  \sum_{\beta + \gamma = \alpha}\|(\nabla
  \Gamma^\beta Y)^{\top}(\partial_t^2 - \Delta)\Gamma^\gamma
  Y\|_{L^2(\Omega_\delta)}\\
&\lesssim \delta^{h-\frac 12}  \sum_{\beta + \gamma
  = \alpha,\ |\beta|\geq |\gamma|}\|(\nabla
  \Gamma^\beta Y)^{\top}(\partial_t^2 - \Delta)\Gamma^\gamma Y\|_{L^2(\Omega_\delta)}\\\nonumber
&\quad  +\delta^{h-\frac 12}  \sum_{\beta + \gamma
  = \alpha,\ |\beta|\leq |\gamma|}\|(\nabla
  \Gamma^\beta Y)^{\top}(\partial_t^2 - \Delta)\Gamma^\gamma
  Y\|_{L^2(\Omega_\delta)}\\\nonumber
&\lesssim   \sum_{\ |\gamma|\leq  [\alpha/2]+3}
\mathcal{E}_{|\alpha|+1}^{\frac12}
  \delta^{m-\frac 12}\|(\partial_t^2 - \Delta)\Gamma^\gamma Y\|_{L^2(\Omega_\delta)}\\\nonumber
&\quad  + \sum_{|\gamma|\leq |\alpha|}
\mathcal{E}_{[\alpha/2]+4}^{\frac12}
  \delta^{m-\frac 12} \|(\partial_t^2 - \Delta)\Gamma^\gamma Y\|_{L^2(\Omega_\delta)} \\\nonumber
&\lesssim
\mathcal{E}_{\kappa-3}^{\frac12} (\mathcal{E}_{\kappa-3}+\mathcal{F}_{\kappa-3}).
\end{align*}
Next, if $t\geq 1$, we deduce that
\begin{align}\label{L-2}
&\delta^{(h-\frac 12)}\sum_{\beta + \gamma  = \alpha}
\|(\nabla\Gamma^\beta Y)^\top(\partial_t^2 - \Delta)\Gamma^\gamma Y\|_{L^2(\Omega_\delta)} \\ \nonumber
&\leq
\sum_{\beta + \gamma  = \alpha}
\delta^{l}\|\nabla\Gamma^\beta Y\|_{L^\infty(\Omega_\delta)}
\delta^{(m-\frac 12)} \|(\partial_t^2 - \Delta)\Gamma^\gamma Y \|_{L^2(\Omega_\delta)}.
\end{align}
By Lemma \ref{K-S-O}, one has
\begin{align*}
\delta^{l} \|\nabla\Gamma^\beta Y\|_{L^\infty(r\leq t/2)}
&\lesssim t^{-\frac12} (\mathcal{X}_{|\beta|+4}^{\frac12}+
\mathcal{E}_{|\beta|+4}^{\frac12}).
\end{align*}
On the other hand,
if $r\geq t/2$, since $r\leq r_v+1$, $t\geq 1$, $\delta\leq 1$, then
\begin{align*}
t/2\leq r\leq r_v+1.
\end{align*}
By Lemma \ref{Sob2} and Lemma \ref{KS-A}, one has
\begin{align*}
\delta^{l} \|\nabla\Gamma^\beta Y\|_{L^\infty(r\geq t/2)}
&\lesssim (1+r_v)^{-\frac12} (1+r_v^{\frac12})
\|\nabla\Gamma^\beta Y\|_{L^\infty(r\geq t/2)} \\
&\lesssim (1+r_v)^{-\frac12}
\mathcal{E}_{|\beta|+4}^{\frac12}
\lesssim t^{-\frac12}
\mathcal{E}_{|\beta|+4}^{\frac12}.
\end{align*}
Hence by Lemma \ref{WE-2}, \eqref{L-2} is bounded by
\begin{align*}
&\sum_{\beta + \gamma  = \alpha}
\delta^{l}\|\nabla\Gamma^\beta Y\|_{L^\infty(\Omega_\delta)}
\delta^{(m-\frac 12)} \|(\partial_t^2 - \Delta)\Gamma^\gamma Y \|_{L^2(\Omega_\delta)} \\\nonumber
&\lesssim \sum_{\beta + \gamma  = \alpha}
t^{-\frac12}(\mathcal{E}_{|\beta|+4}^{\frac12}+\mathcal{X}_{|\beta|+4}^{\frac12})
\cdot \delta^{(m-\frac 12)} \|(\partial_t^2 - \Delta)\Gamma^\gamma Y \|_{L^2(\Omega_\delta)}\\\nonumber
&\lesssim
t^{-\frac 32} \mathcal{E}_{\kappa}^{\frac12} (\mathcal{E}_{\kappa-3}^{\frac12}+\mathcal{F}_{\kappa-3}^{\frac12}).
\end{align*}

Finally we summarize that
\begin{align}\label{LOEE-1}
\frac{d}{dt} \sum_{|\alpha|\leq \kappa-4}
\delta^{2(h-\frac12)}\| \partial(-\Delta)^{-\frac12}\nabla\times \Gamma^\alpha Y\|_{L^2(\Omega_\delta)}^2
\lesssim
\langle t\rangle^{-\frac32}  \mathcal{E}_{\kappa-3}^{\frac 12} \mathcal{E}_{\kappa}^{\frac 12}
(\mathcal{E}_{\kappa-3}^{\frac12}+\mathcal{F}_{\kappa-3}^{\frac12}).
\end{align}
Then we can replace all $\mathcal{E}_{\kappa - 3}$ appeared
throughout this paper by
$$\sum_{|\alpha|\leq \kappa-4}
\delta^{2(h-\frac12)}\| \partial(-\Delta)^{-\frac12}\nabla\times \Gamma^\alpha Y\|_{L^2(\Omega_\delta)}^2$$
without changing the final result. Then \eqref{LOEE-1} gives the
second differential inequality at the end of Section
\ref{Equations}.

\subsection{Estimate of the scaled Energy $\mathcal{F}_{\kappa-3}$ }
\begin{lem}
Let $\kappa \geq 15$. There exists $0<\eta\ll 1$ such that if $\mathcal{E}_{\kappa-3}+\mathcal{F}_{\kappa-3} \leq \eta$.
Then there holds
\begin{align*}
\mathcal{F}_{\kappa-3}\sim \sum_{|\alpha|\leq \kappa-5}
\delta^{2(h-\frac12)} \| \partial_3\partial(-\Delta)^{-\frac12}\nabla\times \Gamma^\alpha Y\|_{L^2(\Omega_\delta)}.
\end{align*}
\end{lem}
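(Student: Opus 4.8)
The proof of this equivalence mirrors exactly the corresponding statement for $\mathcal{E}_{\kappa-3}$ proved in the previous subsection, now carrying an extra $\partial_3$ derivative throughout. The plan is to start from the div-curl decomposition applied to $\partial_3 \Gamma^\alpha Y$, namely
\begin{align*}
\delta^{2(h-\frac12)}\| \partial\partial_3  \Gamma^\alpha Y\|_{L^2(\Omega_\delta)}^2
=\delta^{2(h-\frac12)}\| \partial(-\Delta)^{-\frac12}\nabla\cdot \partial_3\Gamma^\alpha Y\|_{L^2(\Omega_\delta)}^2
+\delta^{2(h-\frac12)}\| \partial(-\Delta)^{-\frac12}\nabla\times \partial_3\Gamma^\alpha Y\|_{L^2(\Omega_\delta)}^2,
\end{align*}
where the operator $\partial_3$ commutes with $(-\Delta)^{-\frac12}\nabla\times$ and with $(-\Delta)^{-\frac12}\nabla\cdot$. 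Since $\partial_3$ commutes with all the generalized derivatives $\Gamma$ up to lower-order terms of the same type (indeed $\partial_3$ is itself one of the $\Gamma$'s, and $\delta^{2((h+1)-\frac12)} = \delta^2 \cdot \delta^{2(h-\frac12)}$, matching the weight convention in $\mathcal{F}_\kappa$), the key task reduces to showing the divergence part is a genuinely higher-order small remainder.

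The main step is therefore to differentiate the structural identity \eqref{Struc-2} once more by $\partial_3$ and estimate $\partial\partial_3\nabla\cdot\Gamma^\alpha Y$. Using the same manipulations as in the $\mathcal{E}_{\kappa-3}$ case — rewriting the quadratic terms via the identity $-(\partial_i\Gamma^\beta Y^i\partial_j\Gamma^\gamma Y^j - \partial_i\Gamma^\gamma Y^j\partial_j\Gamma^\beta Y^i) = (\partial_i,\partial_j)\Gamma^\beta Y^i\cdot(-\partial_j,\partial_i)\Gamma^\gamma Y^j$ and pulling out a divergence, and rewriting the cubic determinant terms via the curl operator $\nabla\times$ — every term in $\partial\partial_3\nabla\cdot\Gamma^\alpha Y$ is seen to be of divergence form acting on a product where one factor is $\partial\partial_3\Gamma^\beta Y$ (or $\partial\Gamma^\beta Y$) and the other carries at most $\nabla\Gamma^\gamma Y$ (and $\nabla\Gamma^\iota Y$ in the cubic case). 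Applying $(-\Delta)^{-\frac12}\nabla\cdot$, which is $L^2$-bounded on the thin domain by the Riesz-transform bounds of Lemma \ref{OP1} and Lemma \ref{OP2}, kills the outer $\nabla\cdot$ and leaves an $L^2$ norm of such products. Then one applies Lemma \ref{Sob2} to place the low-order factor in $L^\infty$ and the high-order factor in $L^2$, distributing the $\partial_3$ derivative onto whichever factor is in $L^2$ (this is legitimate because $\mathcal{F}$ controls $\partial_3\partial\Gamma^\beta Y$ with the matching $\delta$-weight). For $\kappa\geq 15$ and $|\alpha|\leq\kappa-5$, the resulting low-order norms are all controlled by $\mathcal{E}_{\kappa-3}^{1/2}$ and the high-order ones by $(\mathcal{E}_{\kappa-3}+\mathcal{F}_{\kappa-3})^{1/2}$, so the divergence contribution is bounded by $C(\mathcal{E}_{\kappa-3}+\mathcal{F}_{\kappa-3})(1+\mathcal{E}_{\kappa-3}^{1/2})$, hence $\ll \mathcal{F}_{\kappa-3}$ under the smallness hypothesis, except that it is actually $O((\mathcal{E}_{\kappa-3}+\mathcal{F}_{\kappa-3})^2)$ and gets absorbed.

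Summing the div-curl identity over $|\alpha|\leq\kappa-5$ and using the above bound gives
$\mathcal{F}_{\kappa-3}\lesssim \sum_{|\alpha|\leq\kappa-5}\delta^{2(h-\frac12)}\|\partial_3\partial(-\Delta)^{-\frac12}\nabla\times\Gamma^\alpha Y\|_{L^2(\Omega_\delta)}^2 + (\mathcal{E}_{\kappa-3}+\mathcal{F}_{\kappa-3})^2$, and absorbing the quadratic term via $\mathcal{E}_{\kappa-3}+\mathcal{F}_{\kappa-3}\leq\eta\ll1$ yields one direction. The reverse inequality $\sum_{|\alpha|\leq\kappa-5}\delta^{2(h-\frac12)}\|\partial_3\partial(-\Delta)^{-\frac12}\nabla\times\Gamma^\alpha Y\|_{L^2(\Omega_\delta)}^2 \lesssim \mathcal{F}_{\kappa-3}$ is immediate from the $L^2$-boundedness of $(-\Delta)^{-\frac12}\nabla\times$ on the thin domain (Lemma \ref{OP1}, Lemma \ref{OP2}) together with the commutation of $\partial_3$ with this operator. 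The only point requiring care — and the place where the argument is slightly more delicate than the $\mathcal{E}_{\kappa-3}$ case — is the bookkeeping of the $\delta$-powers: each $\partial_3$ must be matched with exactly one factor of $\delta$ so that the final bound lives in terms of $\mathcal{F}_{\kappa-3}$ (with its $\delta^{2(h-\frac12)}$ and one extra $\partial_3$) rather than producing an uncompensated $\delta^{-1}$; this is ensured by always moving the $\partial_3$ onto the factor estimated in $L^2$, never onto the $L^\infty$ factor, and noting $h=l+m$ (resp. $h=l+m+n$) so the weights split correctly.
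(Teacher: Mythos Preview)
Your overall strategy matches the paper's proof exactly: the same div--curl identity, the same structural rewrite of $\partial_3\partial\nabla\cdot\Gamma^\alpha Y$ into divergence form via the $(\partial_i,\partial_j)\cdot(-\partial_j,\partial_i)$ identity and the curl operator, the same use of the Riesz bounds (Lemmas~\ref{OP1}--\ref{OP2}) and Lemma~\ref{Sob2}, and the same absorption at the end. The reverse inequality is also handled identically.

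There is, however, a genuine gap in your absorption step. You bound the squared divergence contribution by $O\big((\mathcal{E}_{\kappa-3}+\mathcal{F}_{\kappa-3})^2\big)$ and then claim this can be absorbed into $\mathcal{F}_{\kappa-3}$ under $\mathcal{E}_{\kappa-3}+\mathcal{F}_{\kappa-3}\leq\eta$. But $(\mathcal{E}_{\kappa-3}+\mathcal{F}_{\kappa-3})^2$ contains the term $\mathcal{E}_{\kappa-3}^2$, which is \emph{not} a priori small relative to $\mathcal{F}_{\kappa-3}$: nothing in the hypotheses forces $\mathcal{E}_{\kappa-3}^2\ll\mathcal{F}_{\kappa-3}$. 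With only your stated bound you would end up with $\mathcal{F}_{\kappa-3}\lesssim\sum(\cdots)+\eta\mathcal{E}_{\kappa-3}$, which does not yield the equivalence.

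The fix --- and this is what the paper actually does --- is to observe that after the Leibniz expansion of $\partial_3\partial$ acting on the products, \emph{every} resulting bilinear (resp.\ trilinear) term has exactly one factor of the form $\partial_3\partial\Gamma^{\bullet}Y$, the remaining factor(s) being merely $\nabla\Gamma^{\bullet}Y$. Whichever factor you place in $L^2$ versus $L^\infty$ (according to derivative count), the $\partial_3\partial$ factor always contributes $\mathcal{F}_{\kappa-3}^{1/2}$ and the other factor(s) contribute $\mathcal{E}_{\kappa-3}^{1/2}$. Hence the squared divergence part is bounded by $\mathcal{F}_{\kappa-3}\mathcal{E}_{\kappa-3}+\mathcal{F}_{\kappa-3}\mathcal{E}_{\kappa-3}^2$, which \emph{does} carry an explicit factor of $\mathcal{F}_{\kappa-3}$ and is therefore absorbable once $\mathcal{E}_{\kappa-3}\ll 1$. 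Your closing remark about ``moving the $\partial_3$ onto the $L^2$ factor'' is pointing toward this, but it is phrased backwards: you do not choose where $\partial_3$ goes (Leibniz dictates that); the point is that the structural rewrite guarantees exactly one $\partial_3\partial$ per product, and then either $L^2$/$L^\infty$ placement yields $\mathcal{F}^{1/2}\mathcal{E}^{1/2}$.
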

\begin{proof}
We begin by the following div-curl identity.
\begin{align}\label{DC-5}
&\delta^{2(h-\frac12)}\| \partial_3\partial  \Gamma^\alpha Y\|_{L^2(\Omega_\delta)}^2\\\nonumber
&=\delta^{2(h-\frac12)}\| \partial_3\partial(-\Delta)^{-\frac12}\nabla\cdot \Gamma^\alpha Y\|_{L^2(\Omega_\delta)}^2
+\delta^{2(h-\frac12)}\| \partial_3\partial(-\Delta)^{-\frac12}\nabla\times \Gamma^\alpha Y\|_{L^2(\Omega_\delta)}^2.
\end{align}
Now let us calculate $\partial_3\partial\nabla\cdot\Gamma^\alpha Y$
in \eqref{DC-5}.
By \eqref{Struc-2}, one has
\begin{align*}
\partial_3\partial\nabla\cdot \Gamma^\alpha Y
 =& -\frac12 \sum_{\beta + \gamma =\alpha} C_{\alpha}^\beta
\partial_3\partial\big(\partial_i\Gamma^\beta Y^i \partial_j\Gamma^\gamma Y^j -
  \partial_i\Gamma^\gamma Y^j\partial_j\Gamma^\beta Y^i\big)\\
  &-\sum_{\beta+\gamma+\iota=\alpha}C_\alpha  ^\beta C_{\alpha-\beta}  ^\gamma
  \partial_3\partial\left|
\begin{matrix}
 \partial_1\Gamma^\beta Y^1 &\partial_2 \Gamma^\beta Y^1 & \partial_3 \Gamma^\beta Y^1\\
 \partial_1\Gamma ^\gamma Y^2& \partial_2 \Gamma ^\gamma Y^2& \partial_3\Gamma ^\gamma Y^2\\
\partial_1\Gamma ^\iota Y^3&\partial_2\Gamma^\iota Y^3&\partial_3\Gamma^\iota Y^3
\end{matrix} \right|.
\end{align*}
Note that
\begin{align*}
&-\partial_3\partial\big(\partial_i\Gamma^\beta Y^i \partial_j\Gamma^\gamma Y^j -
  \partial_i\Gamma^\gamma Y^j\partial_j\Gamma^\beta Y^i\big) \\
&=(\partial_i, \partial_j) \cdot\big(  \partial_3\partial\Gamma^\beta Y^i (-\partial_j,\partial_i)\Gamma^\gamma Y^j\big)
+(\partial_i, \partial_j) \cdot\big(  \partial\Gamma^\beta Y^i (-\partial_j,\partial_i)\partial_3\Gamma^\gamma Y^j\big)
 \\
&\quad+(-\partial_j,\partial_i) \cdot\big(\partial_3(\partial_i, \partial_j) \Gamma^\beta Y^i \partial\Gamma^\gamma Y^j\big)
+(-\partial_j,\partial_i) \cdot\big((\partial_i, \partial_j) \Gamma^\beta Y^i \partial_3\partial\Gamma^\gamma Y^j\big)
\end{align*}
and
\begin{align*}
&\partial_3\partial\left|
\begin{matrix}
 \partial_1\Gamma^\beta Y^1 &\partial_2 \Gamma^\beta Y^1 & \partial_3 \Gamma^\beta Y^1\\
 \partial_1\Gamma ^\gamma Y^2& \partial_2 \Gamma ^\gamma Y^2& \partial_3\Gamma ^\gamma Y^2\\
\partial_1\Gamma ^\iota Y^3&\partial_2\Gamma^\iota Y^3&\partial_3\Gamma^\iota Y^3
\end{matrix} \right| \\
&=\nabla\cdot\big(  \partial_3\partial \Gamma^\beta Y^1 \nabla\times (\Gamma ^\gamma Y^2, \Gamma ^\iota Y^3)\big)
+\nabla\cdot\big(  \partial \Gamma^\beta Y^1 \partial_3\nabla\times (\Gamma ^\gamma Y^2, \Gamma ^\iota Y^3)\big) \\
&\quad-\nabla\cdot\big( \partial_3\partial\Gamma ^\gamma Y^2\nabla\times (\Gamma^\beta Y^1, \Gamma ^\iota Y^3) \big)
-\nabla\cdot\big( \partial\Gamma ^\gamma Y^2\partial_3\nabla\times (\Gamma^\beta Y^1, \Gamma ^\iota Y^3) \big) \\
&\quad +\nabla\cdot\big(\partial_3\partial\Gamma ^\iota Y^3
\nabla\times (\Gamma^\beta Y^1,\Gamma ^\gamma Y^2 ) \big)
+\nabla\cdot\big(\partial\Gamma ^\iota Y^3
\partial_3\nabla\times (\Gamma^\beta Y^1,\Gamma ^\gamma Y^2 ) \big).
\end{align*}
Consequently, by Lemma \ref{Sob2}, one has
\begin{align*}
&\delta^{2(h-\frac12)} \| \partial_3\partial(-\Delta)^{-\frac12}\nabla\cdot \Gamma^\alpha Y\|_{L^2(\Omega_\delta)}^2 \\
&\lesssim \delta^{2(h-\frac12)}\sum_{\beta + \gamma =\alpha}
\|  \partial_3\partial\Gamma^\beta Y \partial\Gamma^\gamma Y \|_{L^2(\Omega_\delta)}^2
 +\delta^{2(h-\frac12)}\sum_{\beta+\gamma+\iota=\alpha}
\|  \partial_3\partial \Gamma^\beta Y\partial\Gamma ^\gamma Y\partial\Gamma ^\iota Y \|_{L^2(\Omega_\delta)}^2\\
&\lesssim \mathcal{F}_{\kappa-3}\mathcal{E}_{\kappa-3}+\mathcal{F}_{\kappa-3}\mathcal{E}^2_{\kappa-3}.
\end{align*}
Inserting the above inequality into \eqref{DC-5}
and summing over $|\alpha|\leq \kappa-5$ for $\kappa\geq 15$, one has
\begin{align*}
\mathcal{F}_{\kappa-3}
\lesssim \mathcal{F}_{\kappa-3}(\mathcal{E}_{\kappa-3}+\mathcal{E}^2_{\kappa-3})
+\sum_{|\alpha|\leq \kappa-5} \delta^{2(h-\frac12)}\| \partial_3\partial(-\Delta)^{-\frac12}\nabla\times \Gamma^\alpha Y\|_{L^2(\Omega_\delta)}^2.
\end{align*}
Note the assumption that $\mathcal{E}_{\kappa-3}+\mathcal{F}_{\kappa-5} \leq \eta\ll 1$
 yields
\begin{align*}
\mathcal{F}_{\kappa-3}
\lesssim \sum_{|\alpha|\leq \kappa-5} \delta^{2(h-\frac12)}\| \partial_3\partial(-\Delta)^{-\frac12}\nabla\times \Gamma^\alpha Y\|_{L^2(\Omega_\delta)}^2.
\end{align*}
On the other hand, by Lemma \ref{OP1} and Lemma \ref{OP2}, the reverse bound also holds.
Thus the lemma is proved.
\end{proof}

Now let us perform the scaled energy estimate for $\mathcal{F}_{\kappa-3}$.
Let $\kappa \geq 15$ and $|\alpha| \leq \kappa - 5$.
Applying $\partial_3(-\Delta)^{-\frac12}\nabla\times $ onto \eqref{Elasticity-D} and then
taking the $L^2$ inner product
of the resulting equations with $\delta^{2(h-\frac 12)}\partial_3\partial_t(-\Delta)^{-\frac12}\nabla\times\Gamma^\alpha
Y$. Using integration by parts, we have
\begin{eqnarray}\nonumber
&&\frac{d}{dt}\int_{\Omega_\delta} \delta^{2(h-\frac 12)}
\big(|\partial_3\partial_t(-\Delta)^{-\frac12}\nabla\times\Gamma^\alpha
Y|^2 + |\partial_3\nabla(-\Delta)^{-\frac12}\nabla\times\Gamma^\alpha
Y|^2\big)dy \\ \nonumber
&&= -\ 2\delta^{2(h-\frac 12)}\int_{\Omega_\delta}
\partial_3\partial_t(-\Delta)^{-\frac12}\nabla\times\Gamma^\alpha Y
  \cdot\partial_3(-\Delta)^{-\frac12}\nabla\times\sum_{\beta + \gamma
  = \alpha}C_\alpha^\beta (\nabla\Gamma^\beta
  Y)^\top(\partial_t^2 - \Delta)\Gamma^\gamma Ydy\\
&&\lesssim \delta^{h-\frac 12} \mathcal{F}_{\kappa-3}^{\frac{1}{2}}
   \sum_{\beta + \gamma= \alpha}\|\partial_3\big[(\nabla
  \Gamma^\beta Y)^{\top}(\partial_t^2 - \Delta)\Gamma^\gamma
  Y\big]\|_{L^2(\Omega_\delta)}.
\end{eqnarray}
Now we focus our mind on
\begin{align*}
& \delta^{h-\frac 12}
   \sum_{\beta + \gamma= \alpha}\|\partial_3\big[(\nabla
  \Gamma^\beta Y)^{\top}(\partial_t^2 - \Delta)\Gamma^\gamma
  Y\big]\|_{L^2(\Omega_\delta)}\\
&\lesssim
\delta^{h-\frac 12}\sum_{\beta + \gamma= \alpha}\big\|
|\partial_3\nabla\Gamma^\beta Y|\cdot|(\partial_t^2 - \Delta)\Gamma^\gamma Y|\big\|_{L^2(\Omega_\delta)}\\
&\quad +\delta^{h-\frac 12}
   \sum_{\beta + \gamma= \alpha}\big\|
   |\nabla\Gamma^\beta Y|\cdot| \partial_3(\partial_t^2 - \Delta)\Gamma^\gamma Y| \big\|_{L^2(\Omega_\delta)}.
\end{align*}
We will perform the estimate in two cases: $t\leq 1$ and $t\geq 1$.

We first take care of the case $t\leq 1$. By Lemma \ref{Sob2} and Lemma \ref{SNB-2}, there holds
\begin{align}\label{H-3}
&\delta^{h-\frac 12}\sum_{\beta + \gamma= \alpha}\big\|
|\partial_3\nabla\Gamma^\beta Y|\cdot|(\partial_t^2 - \Delta)\Gamma^\gamma Y|\big\|_{L^2(\Omega_\delta)}\\\nonumber
&\leq\delta^{h-\frac 12}\sum_{\beta + \gamma= \alpha,\ |\beta|\leq |\gamma|}
\|\partial_3\nabla\Gamma^\beta Y\|_{L^\infty(\Omega_\delta)}
\|(\partial_t^2 - \Delta)\Gamma^\gamma Y\|_{L^2(\Omega_\delta)}\\\nonumber
&\quad+\delta^{h-\frac 12}\sum_{\beta + \gamma= \alpha,\ |\beta|\geq |\gamma|}
\|\partial_3\nabla\Gamma^\beta Y\|_{L^2}\|(\partial_t^2 - \Delta)\Gamma^\gamma Y\|_{L^\infty(\Omega_\delta)}\\\nonumber
&\lesssim (\mathcal{E}_{\kappa-3}^{\frac12}+\mathcal{F}_{\kappa-3}^{\frac12})
(\mathcal{E}_{\kappa-3}+\mathcal{F}_{\kappa-3}).
\end{align}
Similarly, by Lemma \ref{Sob2} and Lemma \ref{SNB-2}, we have
\begin{align}
\delta^{h-\frac 12}\sum_{\beta + \gamma= \alpha}\big\|
|\nabla\Gamma^\beta Y|\cdot|\partial_3(\partial_t^2 - \Delta)\Gamma^\gamma Y|\big\|_{L^2(\Omega_\delta)}
\lesssim \mathcal{E}_{\kappa-3}^{\frac12}
(\mathcal{E}_{\kappa-3}+\mathcal{F}_{\kappa-3}). \nonumber
\end{align}

Next we consider the case for $t\geq 1$. We first deal with
\begin{align}\label{H-4}
&\delta^{h-\frac 12}\sum_{\beta + \gamma= \alpha}\big\|
|\partial_3\nabla\Gamma^\beta Y|\cdot|(\partial_t^2 - \Delta)\Gamma^\gamma Y|\big\|_{L^2(\Omega_\delta)}.
\end{align}
For \eqref{H-4}, if $|\beta|\leq |\gamma|$, by Lemma \ref{KS-A} and Lemma \ref{WE-2}, 
one has
\begin{align*}
&\delta^{h-\frac 12}\sum_{\beta + \gamma= \alpha,\ |\beta|\leq |\gamma|}\big\|
|\partial_3\nabla\Gamma^\beta Y|\cdot|(\partial_t^2 - \Delta)\Gamma^\gamma Y|\big\|_{L^2(\Omega_\delta)}\\\nonumber
&\lesssim \sum_{\beta + \gamma= \alpha,\ |\beta|\leq |\gamma|}
\delta^{l} \|\partial_3\nabla\Gamma^\beta Y\|_{L^\infty(\Omega_\delta)}
\delta^{m-\frac 12} \|(\partial_t^2 - \Delta)\Gamma^\gamma Y\|_{L^2(\Omega_\delta)}\\\nonumber
&\lesssim \sum_{|\beta|\leq [\alpha/2]}
\delta^{l} \|\partial_3\nabla\Gamma^\beta Y\|_{L^\infty(\Omega_\delta)}
\cdot t^{-1}(\mathcal{E}_{\kappa-3}+\mathcal{F}_{\kappa-3}).
\end{align*}
By Lemma \ref{K-S-O}, one has
\begin{align*}
 \sum_{|\beta|\leq [\alpha/2]}
\delta^{l} \|\partial_3\nabla\Gamma^\beta Y\|_{L^\infty(r\leq t/2)}
&\lesssim t^{-\frac12} (\mathcal{X}_{[\alpha/2]+5}^{\frac12}+
\mathcal{E}_{[\alpha/2]+5}^{\frac12}
+\mathcal{F}_{[\alpha/2]+5}^{\frac12}) \\\nonumber
&\lesssim t^{-\frac12} (
\mathcal{E}_{\kappa-3}^{\frac12}
+\mathcal{F}_{\kappa-3}^{\frac12}).
\end{align*}
On the other hand,
if $r\geq t/2$, since $r\leq r_v+1$, $t\geq 1$, $\delta\leq 1$, then
\begin{align*}
t/2\leq r\leq r_v+1.
\end{align*}
By Lemma \ref{Sob2} and Lemma \ref{KS-A}, one has
\begin{align*}
 \sum_{|\beta|\leq [\alpha/2]}
\delta^{l} \|\partial_3\nabla\Gamma^\beta Y\|_{L^\infty(r\geq t/2)}
&\lesssim (1+r_v)^{-\frac12} (
\mathcal{E}_{[\alpha/2]+5}^{\frac12}
+\mathcal{F}_{[\alpha/2]+5}^{\frac12}) \\\nonumber
&\lesssim t^{-\frac12} (
\mathcal{E}_{\kappa-3}^{\frac12}
+\mathcal{F}_{\kappa-3}^{\frac12}).
\end{align*}
For \eqref{H-4},
if $|\beta|\geq |\gamma|$, by Lemma \ref{KS-A} and Lemma \ref{WE-2}, \eqref{H-3}
is bounded by
\begin{align*}
&\delta^{h-\frac 12}\sum_{\beta + \gamma= \alpha,\ |\beta|\geq |\gamma|}\big\|
|\partial_3\nabla\Gamma^\beta Y|\cdot|(\partial_t^2 - \Delta)\Gamma^\gamma Y|\big\|_{L^2(\Omega_\delta)}\\\nonumber
&\lesssim \sum_{\beta + \gamma= \alpha,\ |\beta|\geq |\gamma|}
\delta^{l-\frac12} \|(1-\varphi^t)\partial_3\nabla\Gamma^\beta Y\|_{L^2(\Omega_\delta)}
\delta^{m} \|(1-\varphi^t)(\partial_t^2 - \Delta)\Gamma^\gamma Y\|_{L^\infty(\Omega_\delta)}\\\nonumber
&\quad+ \sum_{\beta + \gamma= \alpha,\ |\beta|\geq |\gamma|}
\delta^{l-\frac12} \|\varphi^t\partial_3\nabla\Gamma^\beta Y\|_{L^2(\Omega_\delta)}
\delta^{m} \|\varphi^t(\partial_t^2 - \Delta)\Gamma^\gamma Y\|_{L^\infty(\Omega_\delta)}\\\nonumber
&\lesssim t^{-2}  (\mathcal{E}_{\kappa-3}+\mathcal{F}_{\kappa-3})
+ \sum_{\beta + \gamma= \alpha,\ |\beta|\geq |\gamma|}
(\mathcal{E}_{|\beta|+2}^{\frac12}+\mathcal{F}_{|\beta|+2}^{\frac12})
\delta^{m-\frac 12} \|\varphi^t(\partial_t^2 - \Delta)\Gamma^\gamma Y\|_{L^\infty(\Omega_\delta)}\\\nonumber
&\lesssim t^{-2}  (\mathcal{E}_{\kappa-3}+\mathcal{F}_{\kappa-3})
+ t^{-\frac12} (\mathcal{E}_{\kappa-3}^{\frac12}+\mathcal{F}_{\kappa-3}^{\frac12})
\sum_{|\gamma|\leq [\alpha/2]+3} \delta^{m-\frac 12} \|\varphi^t(\partial_t^2 - \Delta)\Gamma^\gamma Y\|_{L^2(\Omega_\delta)}\\\nonumber
&\lesssim (t^{-2}+t^{-\frac32})  (\mathcal{E}_{\kappa-3}+\mathcal{F}_{\kappa-3}).
\end{align*}
Next, we take care of
\begin{align}\label{H-5}
&\delta^{h-\frac 12}\sum_{\beta + \gamma= \alpha}\big\|
|\nabla\Gamma^\beta Y|\cdot|\partial_3(\partial_t^2 - \Delta)\Gamma^\gamma Y|\big\|_{L^2(\Omega_\delta)}.
\end{align}
By Lemma \ref{Sob2}, Lemma \ref{KS-A}, Lemma \ref{K-S-O} and Lemma \ref{WE-3}, \eqref{H-5}
is bounded by
\begin{align*}
&\delta^{h-\frac 12}\sum_{\beta + \gamma= \alpha}\big\|
|\nabla\Gamma^\beta Y|\cdot|\partial_3(\partial_t^2 - \Delta)\Gamma^\gamma Y|\big\|_{L^2(\Omega_\delta)}\\\nonumber
&\lesssim \sum_{\beta + \gamma= \alpha}
\delta^{l} \|\nabla\Gamma^\beta Y\|_{L^\infty}
\delta^{m-\frac 12} \|\partial_3(\partial_t^2 - \Delta)\Gamma^\gamma Y\|_{L^2(\Omega_\delta)}\\\nonumber
&\lesssim t^{-\frac12}
\sum_{\beta + \gamma= \alpha} (\mathcal{E}_{|\beta|+4}+\mathcal{X}_{|\beta|+4})^{\frac12}
\delta^{m-\frac 12} \|\partial_3(\partial_t^2 - \Delta)\Gamma^\gamma Y\|_{L^2(\Omega_\delta)}\\\nonumber
&\lesssim t^{-\frac32} \mathcal{E}_{\kappa}^{\frac 12}
(\mathcal{E}_{\kappa-3}^{\frac 12}+\mathcal{F}_{\kappa-3}^{\frac 12}).
\end{align*}
Finally summing up all the estimate gives that
\begin{align}\label{B-3}
\frac{d}{dt} \sum_{|\alpha|\leq \kappa-5}
\delta^{2(h-\frac12)}\|\partial_3\partial(-\Delta)^{-\frac12}\nabla\times\Gamma^\alpha
Y\|_{L^2(\Omega_\delta)}^2
\lesssim
\langle t\rangle^{-\frac32}  \mathcal{F}_{\kappa-3}^{\frac 12} \mathcal{E}_{\kappa}^{\frac 12}
(\mathcal{E}_{\kappa-3}^{\frac12}+\mathcal{F}_{\kappa-3}^{\frac12}).
\end{align}
This gives the last differential inequality at the end of Section
\ref{Equations}.

\section*{Conflict of interest}
The authors declare that they have no conflict of interest.

\section*{Acknowledgement}

The authors would like to thank Prof. Zhen Lei for many stimulating discussions.
Y. Cai was supported by Hong Kong RGC Grant GRF 16308820.
F. Wang was supported by National Natural Science Foundation of China (grant no. 11701477) and the Hong Kong Scholars Program No. XJ2018047.


\end{document}